\def\PaperDate{July 19, 2023}
  \newcommand{\email}[1]{#1}
\let\expandafter\IfThenElse\csname ifthenelse\endcsname
  \newtheorem{globalthm}{NotToBeUsedDirectly}[section]
  \newtheorem{proposition}[globalthm]{Proposition}
  \newtheorem{lemma}[globalthm]{Lemma}
  \newtheorem{observation}[globalthm]{Observation}
  \newtheorem{theorem}[globalthm]{Theorem}
  \newtheorem{corollary}[globalthm]{Corollary}
  \theoremstyle{definition}
  \newtheorem{remark}[globalthm]{Remark}
  \definecolor{darkgreen}{rgb}{0.0, 0.5, 0.0}
  \newcommand{\pref}[1]{(\ref{#1})}
  \newcommand{\notion}[1]{\emph{#1}}
  \newlength{\vshrinkamount}
  \newcommand{\vshrink}[2][0pt]{%
    \begingroup
    \setlength\fboxrule{0pt}%
    \setlength\fboxsep{-\vshrinkamount}%
    \framebox[\width]{$#2$}%
    \endgroup
  }
  \newcommand{\IfEmptyThenElse}[1]{%
    \begingroup
    \def\dummy{#1}%
    \def\empty{}%
    \ifx\dummy\empty
      \def\next##1##2{##1}%
    \else
      \def\next##1##2{##2}%
    \fi
    \expandafter\endgroup\next
  }
  \newcommand{\OptSubscript}[1][]{\IfEmptyThenElse{#1}{}{_{#1}}}
  \newcommand{\OptExponent}[1][]{\IfEmptyThenElse{#1}{}{^{#1}}}
  \newcommand{\AddParentheses}[1]{(#1)}
  \newcommand{\ExpPar}[1][]{\OptExponent[#1]\AddParentheses}
  \newcommand{\SubExp}[1][]{\OptSubscript[#1]\OptExponent}
  \newcommand{\SubExpPar}[1][]{\OptSubscript[#1]\ExpPar}
  \newcommand{\with}{\mid}
  \newcommand{\newvariable}[2]{%
    \newcommand{#1}{#2\SubExp}
    \expandafter\newcommand\csname\expandafter\@gobble\string#1Of\endcsname{#2\SubExpPar}
  }
  \newcommand{\SetOf}[2][]{\IfEmptyThenElse{#1}{
      \left\{\vshrink{#2}\right\}
    }{
      \left\{\,\vshrink{#1\with #2\,}\right\}
    }
  }
  \newcommand{\BlockOf}[1]{\left[\,\vshrink{#1}\,\right]}
  \newcommand{\CopyArg}[1]{#1}
  \newcommand{\ExpCopy}[1][]{^{#1}\CopyArg}
  \newcommand{\SubExpCopy}[1][]{_{#1}\ExpCopy}
  \newcommand{\IndexOp}[1]{#1\SubExpCopy}
  \newcommand{\Sum}{\IndexOp{\sum}}
  \newcommand{\Int}{\IndexOp{\int}}
  \newcommand{\Max}{\IndexOp{\max}}
  \newcommand{\Lim}{\IndexOp{\lim}}
  \newvariable{\RealNumbers}{\mathbb{R}}
  \newvariable{\ComplexNumbers}{\mathbb{C}}
  \newvariable{\NatNumbers}{\mathbb{N}}
  \newcommand{\crossprod}{\times}
  \newcommand{\mapcolon}{:\,}
  \newvariable{\TheDegree}{\operatorname{deg}}
  \newvariable{\ModDegree}{q}
  \newvariable{\TheGraph}{\mathcal{G}}
  \newvariable{\VertexSet}{\mathcal{X}}
  \newvariable{\TheVertex}{x}
  \newvariable{\AltVertex}{y}
  \newvariable{\EdgeSet}{\mathcal{E}}
  \newvariable{\Terminal}{\tau}
  \newvariable{\Initial}{\iota}
  \newcommand{\Opposite}{\operatorname{op}}
  \newcommand{\OppositeOf}[1]{#1^{\Opposite}}
  \newvariable{\TheEdge}{e}
  \newvariable{\AltEdge}{f}
  \newvariable{\PathSet}{\mathcal{P}}
  \newvariable{\ThePath}{p}
  \newvariable{\AltPath}{q}
  \newvariable{\TheIndex}{i}
  \newvariable{\AltIndex}{j}
  \newvariable{\ThrIndex}{k}
  \newvariable{\Projection}{\pi}
  \newcommand{\adj}{-}
  \newcommand{\turn}{\sqfrown}
  \newcommand{\MapsFromTo}[2]{#2^{#1}}
  \newcommand{\ExplicitMapsFromTo}[3][]{\operatorname{Maps}_{#1}(#2;#3)}
  \newcommand{\MapsOn}[1]{\MapsFromTo{#1}{\ComplexNumbers}}
  \newcommand{\compose}{\circ}
  \newcommand{\Equalizer}[2]{\SetOf{#1=#2}}
  \newcommand{\op}{\mathrm{op}}
  \newcommand{\subspace}{\subseteq}
  \newvariable{\TheTwist}{z}
  \newvariable{\NeighborSum}{\Sigma}
  \newvariable{\Rescale}{\sigma}
  \newvariable{\NeighborAvg}{\Delta}
  \newvariable{\LocalTweak}{\delta}
  \newvariable{\TurnSum}{S}
  \newvariable{\Identity}{\operatorname{Id}}
  \newvariable{\Inclusion}{I}
  \newvariable{\Tf}{\mathcal{L}}
  \newvariable{\DualTf}{\Tf'}
  \newvariable{\Gradient}{\nabla}
  \newvariable{\InSum}{O}
  \newvariable{\FnVert}{v}
  \newvariable{\FnEdge}{X}
  \newvariable{\TheFnEdge}{X}
  \newvariable{\AltFnEdge}{Y}
  \newvariable{\FnPath}{u}
  \newcommand{\BoundedMaps}[2][]{\mathcal{B}^{#1}(#2;\ComplexNumbers)}
  \newcommand{\AbsValueOf}[1]{|#1|}
  \newcommand{\NormOf}[2][]{\|#2\|_{#1}}
  \newvariable{\Depends}{F}
  \newvariable{\TheLength}{n}
  \newvariable{\TheDegBound}{D}
  \newcommand{\LengthOf}[1]{\ell(#1)}
  \newcommand{\DummyArg}{-}
  \newcommand{\Distance}[1][]{d_{#1}}
  \newcommand{\DistanceOf}[3][]{\Distance[#1](#2,#3)}
  \newcommand{\InfLipConstOf}[2][]{|#2|_{#1}}
  \newcommand{\LipNormOf}[2][]{\|#2\|_{#1}}
  \newcommand{\InnerConstOf}[2][]{|#2|^{1}_{#1}}
  \newcommand{\InnerNormOf}[2][]{\|#2\|^{1}_{#1}}
  \newcommand{\MaxOf}[1]{\max(#1)}
  \newcommand{\LipMaps}[1][]{\mathcal{F}_{#1}}
  \newcommand{\LipMapsOn}[2][]{\mathcal{F}_{#1}(#2)}
  \newcommand{\LevelLipConstOf}[2][]{|#2|_{\MetricParameter}^{#1}}
  \newcommand{\CardOf}[1]{\# #1}
  \newvariable{\MetricParameter}{\vartheta}
  \newvariable{\AltMetricParameter}{\psi}
  \newvariable{\TheLipConst}{C}
  \newvariable{\NumIterations}{m}
  \newvariable{\ConstOne}{\mathbf{1}}
  \newvariable{\TheEps}{\varepsilon}
  \newvariable{\IndexBound}{N}
  \newvariable{\TheLevel}{m}
  \newvariable{\AltLevel}{n}
  \newvariable{\MaxModDegree}{\bar{\ModDegree}}
  \newvariable{\TheCode}{w}
  \newvariable{\AltCode}{\bar{\TheCode}}
  \newvariable{\ThePrefix}{\TheCode}
  \newvariable{\TheRadius}{\rho}
  \newvariable{\FnDiff}{r}
  \newvariable{\PileHight}{R}
  \newvariable{\FiniteRank}{K}
  \newcommand{\Prefer}[1]{#1^{\bullet}}
  \newcommand{\ess}{\mathrm{ess}}
  \newvariable{\TheProjection}{\Pi}
  \newcommand{\union}{\cup}
  \newcommand{\ContDual}[1]{#1'}
  \newvariable{\TheEigenvalue}{\lambda}
  \newvariable{\AltEigenvalue}{\mu}
  \newvariable{\TheEigenspace}{E}
  \newvariable{\DualEigenspace}{E'}
  \newvariable{\AltEigenspace}{E'}
  \newvariable{\TheGenEigenspace}{K}
  \newvariable{\AltGenEigenspace}{L}
  \newvariable{\MoreIterations}{k}
  \newvariable{\TheBanachSpace}{X}
  \newvariable{\TheCore}{R}
  \newvariable{\TheCoreVector}{u}
  \newcommand{\directsum}[1][]{\oplus_{#1}}
  \newcommand{\DirectSum}[1][]{\oplus_{#1}}
  \newvariable{\Spectrum}{\sigma}
  \newvariable{\ComplexVar}{z}
  \newcommand{\LocConstOn}[1]{\mathcal{C}_{\mathrm{lc}}(#1)}
  \newcommand{\FinAddMeasuresOn}[1]{\mathcal{M}_{\mathrm{fa}}(#1)}
  \newcommand{\PairingOf}[2]{\langle #1\,|\, #2 \rangle}
  \newcommand{\diff}{\operatorname{d}}
  \newvariable{\AlgTf}{\Tf_{0}}
  \newvariable{\DualAlgTf}{\AlgTf'}
  \newvariable{\PrefixSet}{\mathbb{P}}
  \newvariable{\TheMeasure}{\mu}
  \newvariable{\CharFct}{\ConstOne}
  \newvariable{\TheEigenForm}{\varphi}
  \newvariable{\TheIsom}{\psi}
  \newvariable{\IslandInt}{\Psi}
  \newvariable{\TheVectorSpace}{V}
  \newvariable{\DualVectorSpace}{\TheVectorSpace'}
  \newvariable{\AltVectorSpace}{W}
  \newvariable{\TheLinOp}{A}
  \newvariable{\DualLinOp}{\TheLinOp'}
  \newvariable{\AltLinOp}{B}
  \newcommand{\Ker}{\operatorname{ker}}
  \newvariable{\Kernel}{\Ker}
  \newvariable{\TheNilpotency}{I}
  \newvariable{\DualNilpotency}{\TheNilpotency'}
  \newvariable{\AltNilpotency}{J}
  \newvariable{\TheComplement}{C}
  \newvariable{\DualComplement}{\TheComplement'}
  \newvariable{\AltComplement}{D}
  \newvariable{\TheVector}{v}
  \newvariable{\AltVector}{w}
  \newvariable{\DualVector}{\xi}
  \newcommand{\TheFormOf}[2]{(#1\,|\,#2)}
  \newcommand{\VertexDistance}[2]{\operatorname{dist}(#1,#2)}
  \newcommand{\TreeDistance}[1][]{\tilde{\Distance}_{#1}}
  \newcommand{\TreeDistanceOf}[3][]{\TreeDistance[#1](#2,#3)}
  \newcommand{\RayFromTo}[2]{#1\to#2}
  \newcommand{\HoroDistOf}[3][]{\langle #2, #3 \rangle_{#1}}
  \newcommand{\HoroBraket}[3][]{\langle #2, #3 \rangle_{#1}}
  \newcommand{\isomorphic}{\cong}
  \newcommand{\ForwardBoundaryOf}[1]{\TreeBoundary(#1)}
  \newcommand{\ShadowOf}[2][]{\TreeBoundary_{#1}(#2)}
  \newcommand{\CatZero}{{\small CAT(0)}}
  \newcommand{\intersect}{\cap}
  \newcommand{\Cocycle}[3][]{\zeta_{#1}^{#2,#3}}
  \newcommand{\FAMeasuresOn}[1]{\mathcal{M}_{\mathrm{fa}}(#1)}
  \newcommand{\PoissonTf}[1][]{\Psi_{#1}}
  \newcommand{\PoissonTfOf}[2][]{\PoissonTf[#1](#2)}
  \newcommand{\PoissonTfAt}[2][]{\Psi_{#1}^{#2}}
  \newcommand{\PoissonTfAtOf}[3][]{\PoissonTfAt[#1]{#2}(#3)}
  \newvariable{\TheTree}{\tilde{\TheGraph}}
  \newvariable{\TreeEdge}{\tilde{\TheEdge}}
  \newvariable{\TreeMeasure}{\tilde{\TheMeasure}}
  \newvariable{\TreeTurnSum}{\tilde{\TurnSum}}
  \newvariable{\TreeTf}{\tilde{\Tf}}
  \newvariable{\TreeDualTf}{\TreeTf'}
  \newvariable{\TreeNeighborSum}{\tilde{\NeighborSum}}
  \newvariable{\TreeNeighborAvg}{\tilde{\NeighborAvg}}
  \newvariable{\TreeGradient}{\tilde{\Gradient}}
  \newvariable{\TreeRescale}{\tilde{\Rescale}}
  \newvariable{\TreeLocalTweak}{\tilde{\LocalTweak}}
  \newvariable{\TreeBoundary}{\Omega}
  \newvariable{\TheEnd}{\xi}
  \newvariable{\AltEnd}{\chi}
  \newvariable{\TheRay}{\rho}
  \newvariable{\TreeBaseVertex}{\tilde{o}}
  \newvariable{\TheTreeVertex}{\tilde{\TheVertex}}
  \newvariable{\AltTreeVertex}{\tilde{\AltVertex}}
  \newvariable{\ThrTreeVertex}{\tilde{z}}
  \newvariable{\TheEigenFct}{\FnTreeVert}
  \newvariable{\TheBase}{K}
  \newvariable{\MultConst}{C}
  \newvariable{\TreeVertexSet}{\tilde{\VertexSet}}
  \newvariable{\TreeEdgeSet}{\tilde{\EdgeSet}}
  \newvariable{\TreePathSet}{\tilde{\PathSet}}
  \newvariable{\FnTreeVert}{\tilde{\FnVert}}
  \newvariable{\FnTreeEdge}{\tilde{\FnEdge}}
  \newvariable{\FnTreePath}{\tilde{\FnPath}}
  \newvariable{\FnEnds}{\FnTreePath}
  \newcommand{\Invariants}[2][\PiOne]{#2^{#1}}
  \newcommand{\rmod}{/}
  \newcommand{\OrbitSpace}[2]{#2 \rmod #1}
  \newcommand{\Aut}[1][]{\operatorname{Aut}_{#1}}
  \newcommand{\AutOf}[2][]{\Aut[#1](#2)}
  \newcommand{\Restriction}[1][]{\operatorname{res}_{#1}}
  \newcommand{\RestrictionOf}[2][]{\Restriction[#1](#2)}
  \newcommand{\TwistedInvariants}[2][\PiOne]{#2_{\TwistedAction}^{#1}}
  \newvariable{\PiOne}{\Gamma}
  \newvariable{\BaseVertex}{o}
  \newvariable{\TreeAutom}{\gamma}
  \newvariable{\InvAutom}{\TreeAutom^{-1}}
  \newvariable{\TwistedAction}{\rho^{\TheTwist}}
  \newvariable{\TreeCode}{\tilde{\TheCode}}
  \newvariable{\TreePath}{\tilde{\ThePath}}
\begin{document}

  \title{
    Spectral correspondences for finite graphs without dead ends
  }

  \author{K.-U.~Bux \and J.~Hilgert \and T.~Weich}

  \date{\PaperDate}
  
  \newcommand{\ContactInfo}{
    \par\noindent
    K.-U.~Bux,
    Fakult\"at f\"ur Mathematik,
    Universit\"at Bielefeld,
    Postfach 100131, D-33501 Bielefeld,
    Germany;
    \email{bux@math.uni-bielefeld.de}

    \medskip
    \par\noindent
    J.~Hilgert,
    Institut f\"ur Mathematik,
    Universit\"at Paderborn, D-33095 Paderborn,
    Germany;
    \email{hilgert@math.uni-paderborn.de}

    \medskip
    \par\noindent
    T.~Weich,
    Institut f\"ur Mathematik,
    Universit\"at Paderborn, D-33095 Paderborn,
    Germany;
    \email{weich@math.uni-paderborn.de}
  }
  
  \maketitle

  \begin{abstract}
    
    We compare the spectral properties of two kinds of linear
    operators characterizing the (classical) geodesic flow and its
    quantization on connected locally finite graphs without dead ends. The
    first kind are transfer operators acting on vector spaces associated
    with the set of non backtracking paths in the graphs. The second kind
    of operators are averaging operators acting on vector spaces
    associated with the space of vertices of the graph. The choice of
    vector spaces reflects regularity properties. Our main results are
    correspondences between classical and quantum spectral objects as well
    as some automatic regularity properties for eigenfunctions of transfer
    operators.
  \end{abstract}

  \renewcommand{\thefootnote}{}

  \footnotetext{All three authors acknowledge support by the Deutsche
    Forschungsgemeinschaft (DFG, German Research Foundation) via the grant
    SFB-TRR 358/1 2023 — 491392403}

  \footnotetext{Keywords: \emph{spectral theory on graphs},
    \emph{geodesic rays}, \emph{symbolic dynamics}, \emph{transfer
      operators}, \emph{Laplace operators}, \emph{trees}, \emph{Poisson
      transforms}.}

  \footnotetext{MSC: 37E25, 05C81, 47A11, 47B93}

  \section{Introduction}  
  
  Graphs have been used as simplified models in a wide variety of
  fields. In the process combinatorial objects are used to replace
  objects from the original context. This explains why there is a
  plethora of literature on graphs and combinatorial operators on
  function spaces defined in terms of the graph data. Often the
  interpretation of the simplified models in the spirit of the original
  models lead to interrelations between such operators.

  In this paper the relations we study are motivated by relations
  between geodesic flows and their quantizations as they were found in
  the context of compact hyperbolic spaces by Dyatlov, Faure and
  Guillarmou \cite{DFG15} and were later extended to other locally
  symmetric spaces \cite{GHW18a, Had20, KW20, KW21,GHW21,HHW21}.
  Moreover, Faure and Tsujii recently obtained generalizations of
  this quantum classical correspondence for geodesic flows on manifolds
  with variable negative curvature \cite{FT21}. However, the
  understanding of the occuring ``quantum'' operator is so far rather
  inexplicit.

  In a similar vein Anantharaman and collaborators used graphs as
  simplified models to study quantum ergodicity,
  \cite{ALM15,Ana17,AS19a,AS19b,AS19c}. In that context one also finds
  interrelations between the combinatorial operators considered,
  \cite{anantharaman2017some}.

  The study of combinatorial operators on graphs of course by far
  predates these recent efforts. Combinatorial Laplacians, which in our
  context model quantum dynamics, were considered already in the 19th
  century (according to \cite[\S~1.2]{CdV98} for the first time by
  Lagrange when discretizing the equations describing the propagation of
  sound; see also \cite{CDS95} and the references therein). Also, they
  show up prominently in the harmonic analysis of homogeneous trees
  which is not only a toy model for harmonic analysis on symmetric
  spaces, but also a way to do harmonic analysis on certain $p$-adic
  groups (starting with \cite{Ca72,Ca73}, see also \cite{FTN91}). Other
  applications can be found e.g. in data clustering, \cite{HHOS22}. The
  transfer operators, which we use to describe the shift dynamics on
  geodesic rays, can be described by the adjacency matrix of the graph,
  which has been studied intensively e.g. in the context of
  non-backtracking Markov chains (see e.g. \cite{LP16,LP16b}), but also in
  the context of network analysis (see e.g. \cite{SRB23}) and zeta
  functions (see e.g. \cite{Te11}). A relation between a slightly different Laplacian and transfer operator on triangulations of surfaces has recently been established and used in \cite{BCDS23}.

  It is clear from the above that bits and pieces of most
  combinatorial calculations one may do for graphs could be found
  somewhere in the literature. We nevertheless decided to give complete
  proofs for the benefit of the reader with the additional aim to make
  clear where then natural limits of our methods are.
  
  The guiding principle behind the spectral correspondences we want to
  describe is that there is a close relation between the classical
  motion, visualize a particle moving along the graph, and the generator
  of the corresponding quantum motion, which is modeled as a
  (combinatorial) Laplace operator. The easiest case in which this
  principle can be verified, are homogeneous trees where geodesics are
  bi-infinite paths, determined by a pair of distinct boundary
  points. The geodesic flow is given by the motion along such a
  geodesic, which is simply by shifting. On the other hand a natural
  combinatorial \emph{Laplacian} $\NeighborAvg$ acting on functions defined on
  the vertices of the tree is given by averaging the values of the
  function on neighboring points. It is clear what is meant by spectral
  theory of such a Laplacian: choose a space ${\LipMaps}$ of functions
  and determine the spectrum of $\NeighborAvg|_{\LipMaps}$. What is less clear
  is how to build a linear operator from the geodesic flow. There are
  various options, we choose to consider the geodesic flow on geodesic
  rays given by a starting point in the tree and a point in the
  boundary. Then the \emph{transfer operator} $\Tf$ acts on
  functions defined on such geodesic rays by adding up the values of the
  function on the geodesic rays which are shifted into the given one. It
  then turns out that the spectral theory of the transfer operator
  $\Tf$ is indeed very closely related to the Laplacian
  $\NeighborAvg$. The means of transportation between the two sides is the
  Poisson transform mapping generalized functions on the boundary to
  eigenfunctions of combinatorial Laplacians on the tree (see
  Corollary~\ref{cor:Poisson trafo}). These Poisson transforms are
  inspired by the Poisson transforms on Riemannian symmetric spaces and
  can be found for instance in \cite{FTN91}. A generalization to trees
  of bounded degree was worked out in \cite{BHW22}.

  If one wants to leave the world of trees and study the questions
  on finite graphs (which can be seen as quotients of trees), the
  combinatorial Laplacian is still naturally defined. In fact, its
  spectral theory is even simpler because there are only finitely many
  vertices.
  As far as geodesic flow on such finite graphs is concerned, recall
  that geodesic curves for Riemannian manifolds are by no means shortest
  connections between their endpoints. This is true only locally. For
  graphs this point of view says that in fact, any path in the graph
  which is not back-tracking should be considered a (local)
  geodesic. Locally, i.e. for a single edge, it will always be a
  shortest connection. But then the adjacency matrix shows how to extend
  geodesic rays by adding edges at the starting point, which yields a
  natural transfer operator.  Combinatorial manipulations which resemble
  calculations done in \cite{anantharaman2017some,BHW22} and other
  papers then yields linear bijections between equalizers of algebraic
  equations for transfer, respectively Laplace operators (see
  Propositions ~\ref{algebraic} and ~\ref{translated-alg-obs}). These
  equalizers reduce to eigenspaces when the graph is regular, i.e. each
  vertex has the same degree.

  In order to move on from algebraic equalizers to spectral theory of
  the transfer operator one has to restrict the operators to functions
  satisfying some regularity conditions. While
  \cite{anantharaman2017some} works with $\ell^2$-functions, we follow
  \cite{BHW22} in considering Lipschitz conditions as replacement of
  differentiability and thus obtain scales of generalized
  functions. This is more in the spirit of differential analysis on
  locally symmetric spaces. Together with a filtration of the spaces of
  functions on geodesic rays given by the number of edges on which the
  functions actually depend, this allows us to narrow down the location
  of elements in the algebraic equalizers belonging to our scale of
  spaces
  (see Theorem~\ref{main}, which can be read as an automatic
  regularity theorem and Corollary~\ref{transfer-on-loc-const} which is
  a formulation of the resulting spectral correspondence). In the case
  of regular graphs, Corollary~\ref{transfer-on-loc-const} gives an
  exact correspondence of the spectrum of the transfer operators of the
  geodesic flow and the spectrum of the combinatorial Laplacian on the
  regular graph. Such a correspondence is very much expected in view of
  the quantum-classical correspondence. Let us, however, emphasize that
  also in the case of nonregular finite graphs
  Corollary~\ref{transfer-on-loc-const} gives a completely explicit
  description of the spectrum of the transfer operator in terms of the
  combinatorial Laplacian and such an explicit description does not seem
  to be achievable at the moment for geodesic flows in variable
  curvature (see Remark~\ref{rem:correspondence} for further
  discussion).

  Furthermore, our approach allows us to give isomorphisms between
  eigen\-spaces of the transfer operator and eigenspaces (for the same
  eigenvalue) of its dual, which can be viewed as the graph analogue of
  resonant states for the geodesic flow on hyperbolic surfaces (see
  Theorem~\ref{dual-main}).
  
  There are at least two different methods to establish the spectral
  correspondences alluded to above. In this paper we use a mixture of
  combinatorial and functional analytic arguments working on spaces
  defined directly from the finite graph. An alternative is to work on
  the simply connected covering of the graph, which is a tree, and then
  study objects with invarince properties under the natural actions of
  the group of deck transformations. This approach allows one to use the
  additional tools and results available for trees such as the Poisson
  transforms mentioned earlier and it is in remarkable analogy to how
  quantum classical correspondence is obtained for locally symmetric
  spaces. In Section~\ref{sec:up-and-down} we explain this approach and
  complement the main results by adding spaces of invariants under the
  decktransformations to the spectral correspondences (see in particular
  Theorem~\ref{thm:spectral correspondence via covering}).

  For infinite graphs the methods we use in this paper do not suffice
  to go beyond algebraic equalizers in our quantum-classical spectral
  correspondences. On the quantum side, in view of the results for
  convex cocompact hyperbolic surfaces given in \cite{GHW18a}, one would
  expect resonances of the Laplacian to replace the pure point
  spectrum. To our knowledge such a theory of resonances for graphs has
  not yet been worked out. The technique using universal coverings and
  Poisson transforms is limited to regular parameters ensuring that the
  Poisson transforms are actually bijective. This limitation can
  actually be circumvented by using vector valued Poisson transforms
  just as in the case of rank one compact locally symmetric space (see
  \cite{AH23}). This issue will be addressed elsewhere.

  \section{Setting}
  Let $\TheGraph$ be a connected graph with vertex set
  $\VertexSet$. We do not allow loops or multiple edges. By $\EdgeSet$
  we denote the set of oriented edges of $\TheGraph$. The maps
  \(
    \Terminal\mapcolon\EdgeSet\rightarrow\VertexSet
  \)
  and
  \(
    \Initial\mapcolon\EdgeSet\rightarrow\VertexSet
  \)
  assign to each oriented edge $\TheEdge\in\EdgeSet$ its terminal
  vertex $\TerminalOf{\TheEdge}$ and its initial vertex
  $\InitialOf{\TheEdge}$, respectively. The map
  \(
    \Opposite\mapcolon\EdgeSet\rightarrow\EdgeSet
  \)
  reverses the orientation, i.e.,
  $\TerminalOf{\OppositeOf{\TheEdge}}=\InitialOf{\TheEdge}$ and
  $\InitialOf{\OppositeOf{\TheEdge}}=\TerminalOf{\TheEdge}$.

  We say that the vertices $\TheVertex$ and $\AltVertex$ are
  \notion{adjacent} or \notion{neighbors} if there is an edge $\TheEdge$
  with $\InitialOf{\TheEdge}=\TheVertex$ and
  $\TerminalOf{\TheEdge}=\AltVertex$.  In this case, we also say that
  $\TheEdge$ \notion{goes from $\TheVertex$ to $\AltVertex$}. Then, we
  write $\TheVertex \adj \AltVertex$. As we do not allow loops, a vertex
  does not neighbor itself. As we do not allow multiple edges, two
  adjacent vertices $\TheVertex$ and $\AltVertex$ are joined by exactly
  one edge from $\TheVertex$ to $\AltVertex$ (and the opposite edge from
  $\AltVertex$ to $\TheVertex$).

  For each vertex $\TheVertex\in\VertexSet$, let
  $\TheDegreeOf{\TheVertex}$ denote its \notion{degree}, i.e., the
  number of its neighbors. We assume that $\TheDegreeOf{\TheVertex}$ is
  always at least $2$.  That says that there are no isolated vertices
  (degree~$0$) and no terminal vertices (degree~$1$) in $\TheGraph$. In
  particular, this ensures that when we visit a vertex through an
  incoming edge, we can leave it without backtracking. In particular,
  any edge path can be continued ad infinitum.

  We also assume that the graph $\TheGraph$ is \notion{locally
    finite}, i.e., each vertex has finite degree. It will be convenient to
  consider the degree shifted down by $1$. So, we write
  $\TheDegreeOf{\TheVertex}=1+\ModDegreeOf{\TheVertex}$. 

  We say that two edges $\TheEdge$ and $\AltEdge$ form a \notion{turn}
  if $\TerminalOf{\TheEdge}=\InitialOf{\AltEdge}$ and
  $\TheEdge\neq\OppositeOf{\AltEdge}$.  In this case, we write
  $\TheEdge\turn\AltEdge$.  By $\PathSet$, we denote the set of
  \notion{infinite edge paths without backtracking}, i.e., sequences
  $\TheEdge[1]\turn\TheEdge[2]\turn\TheEdge[3]\turn\cdots$ of edges such
  that $\TheEdge[\TheIndex]$ and $\TheEdge[\TheIndex+1]$ form a turn for
  each $\TheIndex$. Note that the space of edge paths $\PathSet$ is an
  example of a shift space. If the set of oriented edges $\EdgeSet$ is
  finite then we are in the setting of subshifts of finite type (see
  e.g. \cite[Chapter 1]{Baladi}).
  
  For each index $\TheIndex$ we define the projection
  $\Projection[\TheIndex]\mapcolon\PathSet\rightarrow\EdgeSet$ by:
  \[
    \ProjectionOf[\TheIndex]{
      \TheEdge[1]\turn\TheEdge[2]\turn\TheEdge[3]\turn\cdots
    }
    :=
    \TheEdge[\TheIndex]
  \]
  Note that the absence of terminal vertices is equivalent to the
  surjectivity of each projection $\Projection[\TheIndex]$, whereas the
  absence of isolated vertices is equivalent to the surjectivity of the
  maps $\Initial\mapcolon\EdgeSet\rightarrow\VertexSet$ and
  $\Terminal\mapcolon\EdgeSet\rightarrow\VertexSet$.

  \section{An algebraic prelude}
  We start by considering function spaces without any imposition about
  regularity.  Let $\MapsFromTo{A}{B}$ denote the set of all maps from
  $A$ to $B$.  Then $\MapsFromTo{A}{\ComplexNumbers}$ is a
  $\ComplexNumbers$-vector space.

  The projections
  \(
    \PathSet\xrightarrow{\Projection[1]}
    \EdgeSet\xrightarrow{\Initial}
    \VertexSet
  \)
  induce inclusions
  \(
    \MapsOn{\VertexSet}\xrightarrow{\Inclusion}
    \MapsOn{\EdgeSet}\xrightarrow{\Inclusion}
    \MapsOn{\PathSet}
    .
  \)
  Under this interpretation, a function on $\PathSet$ belongs to the
  subspace $\MapsOn{\EdgeSet}$ if it only depends on the
  initial edge of the path. Similarly, a function defined on edges
  belongs to $\MapsOn{\VertexSet}$ if it only depends on
  the initial vertex of the edge.

  As the graph is locally finite, the \notion{neighbor sum operator}
  \begin{align*}
    \NeighborSum \mapcolon
    \MapsOn{\VertexSet}
    & \longrightarrow \MapsOn{\VertexSet}\\
    \FnVert
    & \longmapsto
      \BlockOf{
      \TheVertex \mapsto
      \Sum[\TheVertex\adj\AltVertex]{
      \FnVertOf{\AltVertex}
      }
      }
  \end{align*}
  is defined.  Similarly, summation over one-turn-extensions defines
  the \notion{transfer operator}:
  \begin{align*}
    \Tf\mapcolon
    \MapsOn{\PathSet}
    & \longrightarrow\MapsOn{\PathSet} \\
    \FnPath
    & \longmapsto
      \BlockOf{
      (\TheEdge[1]\turn\TheEdge[2]\turn\TheEdge[3]\turn\cdots)
      \mapsto
      \Sum[{\TheEdge[0]\turn\TheEdge[1]}]{
      \FnPathOf{\TheEdge[0]\turn\TheEdge[1]\turn\TheEdge[2]\turn\cdots}
      }
      }
  \end{align*}
  Note that this operator restricts to the subspace
  $\MapsOn{\EdgeSet}$. We call the restriction the
  \notion{turn sum operator} and use different notation:
  \begin{align*}
    \TurnSum \mapcolon
    \MapsOn{\EdgeSet}
    & \longrightarrow \MapsOn{\EdgeSet} \\
    \FnEdge
    & \longmapsto
      \BlockOf{
      \TheEdge
      \mapsto
      \Sum[\AltEdge\turn\TheEdge]{
      \FnEdgeOf{\AltEdge}
      }
      }
  \end{align*}

  Here, we want to relate the operators $\TurnSum$ and
  $\NeighborSum$. For each complex parameter $\TheTwist\neq 0$, we
  consider the \notion{twisted gradient} operator
  \begin{align*}
    \Gradient[\TheTwist]\mapcolon
    \MapsOn{\VertexSet}
    & \longrightarrow\MapsOn{\EdgeSet} \\
    \FnVert
    & \longmapsto
      (\FnVert\compose\Initial) - \TheTwist^{-1} (\FnVert\compose\Terminal)
  \end{align*}
  and compute the composition $\TurnSum\Gradient[\TheTwist]$. For a
  function $\FnVert\in\MapsOn{\VertexSet}$ and an edge
  $\TheEdge$, we find:
  \begin{align*}
    (\TurnSum\Gradient[\TheTwist]\,\FnVert)(\TheEdge)
    & = \Sum[\AltEdge\turn\TheEdge]{
      (\Gradient[\TheTwist]\,\FnVert)(\AltEdge)
      }
    \\
    & = \Sum[\AltEdge\turn\TheEdge]{
      \left(
      \FnVertOf{\InitialOf{\AltEdge}}
      -
      \TheTwist^{-1} \FnVertOf{\TerminalOf{\AltEdge}}
      \right)
      }
    \\
    & = (\NeighborSum\,\FnVert)(\InitialOf{\TheEdge})
      - \FnVertOf{\TerminalOf{\TheEdge}}
      - \TheTwist^{-1} \ModDegreeOf{\InitialOf{\TheEdge}} \FnVertOf{\InitialOf{\TheEdge}}
    \\
    & = (\NeighborSum\,\FnVert)(\InitialOf{\TheEdge})
      + \TheTwist(\Gradient[\TheTwist]\,\FnVert)(\TheEdge)
      - (\TheTwist+\TheTwist^{-1}\ModDegreeOf{\InitialOf{\TheEdge}})
      \FnVertOf{\InitialOf{\TheEdge}}
  \end{align*}
  This motivates the introduction of the \notion{twisted rescaling operator}:
  \begin{align*}
    \Rescale[\TheTwist] \mapcolon
    \MapsOn{\VertexSet}
    & \longrightarrow \MapsOn{\VertexSet} \\
    \FnVert
    & \longmapsto
      \BlockOf{
      \TheVertex \mapsto (\TheTwist+\ModDegreeOf{\TheVertex}\TheTwist^{-1})
      \FnVertOf{\TheVertex}
      }
  \end{align*}
  which is simply a multiplication operator with the function
  \(
    \TheVertex\mapsto\TheTwist + \ModDegreeOf{\TheVertex}\TheTwist^{-1}.
  \)
  Note that in the case of constant degree
  $\ModDegreeOf{\TheVertex}=\MaxModDegree$ the twisted rescaling
  operator is simply the multiplication with a scalar.
  
  With this, we can write our finding as follows:
  \begin{equation}\label{OperatorIdentity}
    (\TurnSum-\TheTwist\Identity)\Gradient[\TheTwist]
    =
    \Inclusion(\NeighborSum - \Rescale[\TheTwist])
  \end{equation}

  \begin{proposition}\label{algebraic}
    For $\TheTwist\not\in\SetOf{-1,0,1}$, the twisted gradient
    $\Gradient[\TheTwist]$ defines an isomorphism from the equalizer
    \(
      \Equalizer{\NeighborSum}{\Rescale[\TheTwist]}
      \subspace\MapsOn{\VertexSet}
    \)
    to the equalizer
    \(
      \Equalizer{\TurnSum}{\TheTwist\Identity}
      \subspace\MapsOn{\EdgeSet}
    \),
    i.e., the eigenspace of $\TurnSum$ to the eigenvalue $\TheTwist$.

    The inverse is given as the restriction of
    $(\TheTwist-\TheTwist^{-1})^{-1}\InSum$ where
    \begin{align*}
      \InSum \mapcolon
      \MapsOn{\EdgeSet}
      & \longrightarrow\MapsOn{\VertexSet} \\
      \FnEdge
      & \longmapsto
        \BlockOf{
        \TheVertex \mapsto
        \Sum[\TerminalOf{\TheEdge}=\TheVertex]{
        \FnEdgeOf{\TheEdge}
        }
        }
    \end{align*}
    is given by summing over incoming edges.
  \end{proposition}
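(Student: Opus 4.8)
The plan is to exhibit $(\TheTwist-\TheTwist^{-1})^{-1}\InSum$, restricted to $\Equalizer{\TurnSum}{\TheTwist\Identity}$, as a two-sided inverse of $\Gradient[\TheTwist]$. Everything rests on the operator identity \pref{OperatorIdentity}, on the injectivity of the inclusion $\Inclusion\mapcolon\MapsOn{\VertexSet}\to\MapsOn{\EdgeSet}$ (it is the pullback along the surjection $\Initial$, hence injective), and on two short computations of the compositions $\InSum\Gradient[\TheTwist]$ and $\Gradient[\TheTwist]\InSum$.

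First I would observe that $\Gradient[\TheTwist]$ maps $\Equalizer{\NeighborSum}{\Rescale[\TheTwist]}$ into $\Equalizer{\TurnSum}{\TheTwist\Identity}$: for $\FnVert$ in the vertex equalizer the right hand side of \pref{OperatorIdentity} vanishes, so $(\TurnSum-\TheTwist\Identity)\Gradient[\TheTwist]\FnVert=0$. Conversely, if $\Inclusion(\NeighborSum-\Rescale[\TheTwist])\FnVert=0$ then by injectivity of $\Inclusion$ already $\NeighborSum\FnVert=\Rescale[\TheTwist]\FnVert$; I will use this backward reading below.

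Next come the two computations. Evaluating $\InSum\Gradient[\TheTwist]\FnVert$ at a vertex $\TheVertex$, and using that precisely $\TheDegreeOf{\TheVertex}=1+\ModDegreeOf{\TheVertex}$ edges terminate at $\TheVertex$ (one per neighbour, since there are no multiple edges), yields $\InSum\Gradient[\TheTwist]=(\NeighborSum-\Rescale[\TheTwist])+(\TheTwist-\TheTwist^{-1})\Identity$ as operators on $\MapsOn{\VertexSet}$. For the other order, evaluating $\Gradient[\TheTwist]\InSum\FnEdge$ on an edge $\TheEdge$, the point is that the edges $\AltEdge$ with $\TerminalOf{\AltEdge}=\InitialOf{\TheEdge}$ are exactly those with $\AltEdge\turn\TheEdge$ together with the single edge $\OppositeOf{\TheEdge}$, and symmetrically at $\TerminalOf{\TheEdge}=\InitialOf{\OppositeOf{\TheEdge}}$; this gives $(\InSum\FnEdge)(\InitialOf{\TheEdge})=(\TurnSum\FnEdge)(\TheEdge)+\FnEdgeOf{\OppositeOf{\TheEdge}}$ and $(\InSum\FnEdge)(\TerminalOf{\TheEdge})=(\TurnSum\FnEdge)(\OppositeOf{\TheEdge})+\FnEdgeOf{\TheEdge}$, so that $(\Gradient[\TheTwist]\InSum\FnEdge)(\TheEdge)=(\TurnSum\FnEdge)(\TheEdge)-\TheTwist^{-1}\FnEdgeOf{\TheEdge}+\FnEdgeOf{\OppositeOf{\TheEdge}}-\TheTwist^{-1}(\TurnSum\FnEdge)(\OppositeOf{\TheEdge})$.

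Finally I would assemble everything. Since $\TheTwist\not\in\SetOf{-1,0,1}$ the scalar $\TheTwist-\TheTwist^{-1}=(\TheTwist^{2}-1)/\TheTwist$ is nonzero. On $\Equalizer{\NeighborSum}{\Rescale[\TheTwist]}$ the first composition is multiplication by $\TheTwist-\TheTwist^{-1}$, so $(\TheTwist-\TheTwist^{-1})^{-1}\InSum$ is a left inverse of $\Gradient[\TheTwist]$ there and $\Gradient[\TheTwist]$ is in particular injective on it. On $\Equalizer{\TurnSum}{\TheTwist\Identity}$, substituting $\TurnSum\FnEdge=\TheTwist\FnEdge$ into the formula above makes the $\FnEdgeOf{\OppositeOf{\TheEdge}}$-terms cancel and leaves $\Gradient[\TheTwist]\InSum\FnEdge=(\TheTwist-\TheTwist^{-1})\FnEdge$; moreover, applying \pref{OperatorIdentity} to $\InSum\FnEdge$ and using the backward reading shows $\InSum\FnEdge\in\Equalizer{\NeighborSum}{\Rescale[\TheTwist]}$. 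Hence $\Gradient[\TheTwist]$ is also surjective onto the $\TheTwist$-eigenspace $\Equalizer{\TurnSum}{\TheTwist\Identity}=\Ker(\TurnSum-\TheTwist\Identity)$, with inverse $(\TheTwist-\TheTwist^{-1})^{-1}\InSum$. The only step that needs genuine care is the turn count in the second computation: it uses that $\TheGraph$ has neither loops nor multiple edges, so that the edges into $\InitialOf{\TheEdge}$ exceed the turns into $\TheEdge$ by exactly one edge — this structural input is what produces the cancellation and is exactly why the hypothesis cannot be dropped.
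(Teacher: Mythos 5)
Your proposal is correct and takes essentially the same route as the paper: the core of both arguments is the pair of identities $(\InSum\,\FnEdge)(\InitialOf{\TheEdge})=(\TurnSum\,\FnEdge)(\TheEdge)+\FnEdgeOf{\OppositeOf{\TheEdge}}$ and $(\InSum\,\FnEdge)(\TerminalOf{\TheEdge})=(\TurnSum\,\FnEdge)(\OppositeOf{\TheEdge})+\FnEdgeOf{\TheEdge}$, which on the eigenspace yield $\Gradient[\TheTwist]\InSum\,\FnEdge=(\TheTwist-\TheTwist^{-1})\FnEdge$. The only (harmless) differences are that you deduce injectivity from the left-inverse identity $\InSum\Gradient[\TheTwist]=(\NeighborSum-\Rescale[\TheTwist])+(\TheTwist-\TheTwist^{-1})\Identity$ rather than from the paper's direct observation that any $\FnVert$ in the kernel of $\Gradient[\TheTwist]$ must satisfy $\TheTwist^{2}\FnVert=\FnVert$, and that you explicitly verify $\InSum\,\FnEdge\in\Equalizer{\NeighborSum}{\Rescale[\TheTwist]}$ via \pref{OperatorIdentity} --- a point the paper leaves implicit.
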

  \begin{proof}
    First note that $\FnVert\in\MapsOn{\VertexSet}$ lies in the kernel
    of $\Gradient[\TheTwist]$ if and only if
    $\TheTwist\FnVertOf{\InitialOf{\TheEdge}}=\FnVertOf{\TerminalOf{\TheEdge}}$
    for each edge $\TheEdge$. By considering $\TheEdge$ and
    $\OppositeOf{\TheEdge}$, we find that $\TheTwist^2\FnVert=\FnVert$ is
    a necessary condition for being in the kernel.  In particular,
    $\Gradient[\TheTwist]$ is injective for
    $\TheTwist\not\in\SetOf{-1,0,-1}$.
    
    Also, observe the following identities:
    \begin{align*}
      (\InSum\, \FnEdge)( \InitialOf{\TheEdge} )
      & = (\TurnSum\, \FnEdge)(\TheEdge) + \FnEdgeOf{\OppositeOf{\TheEdge}} \\
      (\InSum\, \FnEdge)( \TerminalOf{\TheEdge} )
      & = (\TurnSum\, \FnEdge)( \OppositeOf{\TheEdge} ) + \FnEdgeOf{ \TheEdge }
    \end{align*}
    For $\FnEdge\in\Equalizer{\TurnSum}{\TheTwist\Identity}$, we find:
    \begin{align*}
      (\Gradient[\TheTwist]\InSum\,\FnEdge)(\TheEdge)
      & = (\InSum\,\FnEdge)(\InitialOf{\TheEdge})
        - \TheTwist^{-1}(\InSum\,\FnEdge)(\TerminalOf{\TheEdge})
      \\
      & = \TheTwist\FnEdgeOf{\TheEdge} + \FnEdgeOf{\OppositeOf{\TheEdge}}
        - \TheTwist^{-1}\TheTwist \FnEdgeOf{\OppositeOf{\TheEdge}}
        - \TheTwist^{-1} \FnEdgeOf{\TheEdge}
      \\
      & = (\TheTwist-\TheTwist^{-1}) \FnEdgeOf{\TheEdge}
    \end{align*}
    It follows that $\Gradient[\TheTwist]$ is also onto, whence it is
    an isomorphism whose inverse is given by
    $(\TheTwist-\TheTwist^{-1})^{-1}\InSum$.
  \end{proof}

  An alternative perspective, more in line with analogies to analysis
  on manifolds, is to consider the Laplacian $\NeighborAvg$, i.e.,
  averaging over neighboring vertices instead of summing over them.
  We adjust the twisted rescaling operator accordingly.
  Specifically, we define:
  \begin{align*}
    \NeighborAvg\mapcolon
    \MapsOn{\VertexSet}
    & \longrightarrow \MapsOn{\VertexSet}
    \\
    \FnVert & \longmapsto
              \BlockOf{
              \TheVertex \mapsto
              \frac{1}{1+\ModDegreeOf{\TheVertex}}
              \Sum[\AltVertex\adj\TheVertex]{
              \FnVertOf{\AltVertex}
              }
              }
    \\
    \LocalTweak[\TheTwist]\mapcolon
    \MapsOn{\VertexSet}
    & \longrightarrow \MapsOn{\VertexSet}
    \\
    \FnVert & \longmapsto
              \BlockOf{
              \TheVertex \mapsto
              \frac{
              \TheTwist+\TheTwist^{-1}\ModDegreeOf{\TheVertex}
              }{
              1+\ModDegreeOf{\TheVertex}
              }
              \FnVertOf{\TheVertex}
              }
  \end{align*}
  Note that the equalizer
  \(
    \Equalizer{\NeighborAvg}{\LocalTweak[\TheTwist]}
  \)
  equals the equalizer
  \(
    \Equalizer{\NeighborSum}{\Rescale[\TheTwist]}
  \).
  Hence, Proposition~\ref{algebraic} translates into this setting as
  follows:
  \begin{proposition}\label{translated-alg-obs}
    For $\TheTwist\not\in\SetOf{-1,0-1}$, the twisted gradient
    $\Gradient[\TheTwist]$ defines an isomorphism from the equalizer
    \(
      \Equalizer{\NeighborAvg}{\LocalTweak[\TheTwist]}
    \)
    to the eigenspace
    \(
      \Equalizer{\TurnSum}{\TheTwist\Identity}
    \)
    of $\TurnSum$ to the eigenvalue $\TheTwist$.
  \end{proposition}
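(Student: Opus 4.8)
The plan is to derive Proposition~\ref{translated-alg-obs} directly from Proposition~\ref{algebraic}. The only difference between the two statements is that the pair of operators $(\NeighborSum,\Rescale[\TheTwist])$ has been replaced by $(\NeighborAvg,\LocalTweak[\TheTwist])$, so it suffices to check that these two pairs define the same equalizer inside $\MapsOn{\VertexSet}$, that is, $\Equalizer{\NeighborAvg}{\LocalTweak[\TheTwist]} = \Equalizer{\NeighborSum}{\Rescale[\TheTwist]}$, as already announced in the paragraph preceding the statement. Granting this, Proposition~\ref{algebraic} says that $\Gradient[\TheTwist]$ maps $\Equalizer{\NeighborSum}{\Rescale[\TheTwist]}$ isomorphically onto the eigenspace $\Equalizer{\TurnSum}{\TheTwist\Identity}$ of $\TurnSum$ to the eigenvalue $\TheTwist$, with inverse the restriction of $(\TheTwist-\TheTwist^{-1})^{-1}\InSum$; reading the very same isomorphism on the identical domain $\Equalizer{\NeighborAvg}{\LocalTweak[\TheTwist]}$ yields exactly the assertion.

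To verify the equality of equalizers, I would introduce the pointwise multiplication operator $M$ on $\MapsOn{\VertexSet}$ sending $\FnVert$ to $\TheVertex\mapsto (1+\ModDegreeOf{\TheVertex})^{-1}\FnVertOf{\TheVertex}$. Comparing the defining formulas, one reads off immediately that $\NeighborAvg = M\NeighborSum$ and $\LocalTweak[\TheTwist] = M\Rescale[\TheTwist]$ as operators on $\MapsOn{\VertexSet}$. Since every vertex has degree $1+\ModDegreeOf{\TheVertex}\ge 2$, the multiplier $\TheVertex\mapsto(1+\ModDegreeOf{\TheVertex})^{-1}$ never vanishes, so $M$ is a bijection of $\MapsOn{\VertexSet}$. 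Hence for $\FnVert\in\MapsOn{\VertexSet}$ the equations $\NeighborAvg\FnVert = \LocalTweak[\TheTwist]\FnVert$, $M(\NeighborSum\FnVert) = M(\Rescale[\TheTwist]\FnVert)$, and $\NeighborSum\FnVert = \Rescale[\TheTwist]\FnVert$ are all equivalent, which gives the desired equality of subspaces.

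There is no real obstacle here: the proposition is a translation of Proposition~\ref{algebraic} along the invertible rescaling $M$. The one point worth flagging is that this invertibility, and hence the coincidence (rather than merely an inclusion) of the two equalizers, relies precisely on the standing assumption that $\TheGraph$ has no vertices of degree $0$ or $1$, so that $1+\ModDegreeOf{\TheVertex}$ is always at least $2$ and in particular nonzero.
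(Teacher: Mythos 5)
Your proposal is correct and follows essentially the same route as the paper, which also deduces the statement from Proposition~\ref{algebraic} after noting that the two equalizers coincide. You merely spell out the paper's one-line remark by exhibiting the invertible multiplication operator $M$ with $\NeighborAvg = M\NeighborSum$ and $\LocalTweak[\TheTwist] = M\Rescale[\TheTwist]$, which is exactly the intended justification.
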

  \begin{remark}
    Correspondences between the spectra of $\NeighborAvg, \Sigma$ and
    $S$ as stated in Proposition~\ref{algebraic} and
    \ref{translated-alg-obs} were studied in several places in the
    literature: for example in \cite{Smi07} equations~(44) and~(45), in
    \cite[Section 7]{ALM15} and in \cite[Section 3.1]{LP16}. In the two
    last mentioned articles the authors even provide a more detailed
    description of this relation including the exceptional parameters
    $z=\pm1$ and a description of Jordan blocks. However they only study
    the case of regular graphs. The only reference we are aware of that
    also treats non-regular graphs and that establishes a relation between
    eigenfunctions of $\TurnSum$ and elements in the equalizer
    $\Equalizer{\NeighborAvg}{\LocalTweak[\TheTwist]}$ is \cite{anantharaman2017some};
    see equation~(4) and the following discussion therein.
  \end{remark}

  \section{Bounded functions}
  As our first assumption on regularity, we discuss bounded functions
  on $\VertexSet$, $\EdgeSet$, and $\PathSet$. Let the corresponding
  function spaces be denoted by $\BoundedMaps{\VertexSet}$,
  $\BoundedMaps{\EdgeSet}$, and $\BoundedMaps{\PathSet}$, respectively.
  We endow these spaces with the sup-norm $\NormOf[\infty]{\cdot}$,
  turning them into Banach spaces.  The $\NormOf[\infty]{\cdot}$-limit
  of a Cauchy sequence of functions is given by taking limits pointwise.

  We define a filtration of $\BoundedMaps{\PathSet}$ as
  follows. Let $\Depends[\TheLength]$ be the subspace of
  $\BoundedMaps{\PathSet}$ consisting of those $\FnPath$ where
  $\FnPathOf{\TheEdge[1]\turn\TheEdge[2]\turn\cdots}$ only depends on
  $\TheEdge[1],\TheEdge[2],\ldots,\TheEdge[\TheLength]$.
  \begin{lemma}
    $\Depends[\TheLength]$ is a closed subspace of $\BoundedMaps{\PathSet}$.
  \end{lemma}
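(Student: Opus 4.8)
The plan is to show $\Depends[\TheLength]$ is closed by proving that the property ``$\FnPath$ depends only on the first $\TheLength$ edges'' is preserved under taking $\NormOf[\infty]{\cdot}$-limits, which works because such limits are computed pointwise. First I would unpack the defining condition: $\FnPath\in\Depends[\TheLength]$ precisely when $\FnPathOf{\TheEdge[1]\turn\cdots\turn\TheEdge[\TheLength]\turn\TheEdge[\TheLength+1]\turn\cdots} = \FnPathOf{\TheEdge[1]\turn\cdots\turn\TheEdge[\TheLength]\turn\AltEdge[\TheLength+1]\turn\cdots}$ for any two paths that agree on their first $\TheLength$ coordinates. Equivalently, for every admissible finite word $\TheEdge[1]\turn\cdots\turn\TheEdge[\TheLength]$ and every pair of infinite non-backtracking continuations $\ThePath, \AltPath$ of that word, the equation $\FnPathOf{\ThePath}-\FnPathOf{\AltPath}=0$ holds. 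Each such equation is a single linear functional $\ThePath^{*}-\AltPath^{*}$ (a difference of point evaluations) applied to $\FnPath$, and point evaluations are continuous on $\BoundedMaps{\PathSet}$ with the sup-norm.

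Next I would argue the closure directly. Let $(\FnPath^{(\ThrIndex)})_{\ThrIndex}$ be a sequence in $\Depends[\TheLength]$ converging in $\NormOf[\infty]{\cdot}$ to some $\FnPath\in\BoundedMaps{\PathSet}$. Fix two paths $\ThePath,\AltPath\in\PathSet$ that agree in their first $\TheLength$ coordinates. For every $\ThrIndex$ we have $\FnPath^{(\ThrIndex)}(\ThePath)=\FnPath^{(\ThrIndex)}(\AltPath)$ since $\FnPath^{(\ThrIndex)}\in\Depends[\TheLength]$. Since $\NormOf[\infty]{\cdot}$-convergence implies pointwise convergence, $\FnPath^{(\ThrIndex)}(\ThePath)\to\FnPath(\ThePath)$ and $\FnPath^{(\ThrIndex)}(\AltPath)\to\FnPath(\AltPath)$, so passing to the limit gives $\FnPath(\ThePath)=\FnPath(\AltPath)$. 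As $\ThePath,\AltPath$ were an arbitrary pair of paths agreeing on the first $\TheLength$ coordinates, $\FnPath$ depends only on $\TheEdge[1],\ldots,\TheEdge[\TheLength]$, i.e.\ $\FnPath\in\Depends[\TheLength]$. Hence $\Depends[\TheLength]$ is closed.

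Alternatively, and perhaps more cleanly, one can write $\Depends[\TheLength]=\bigcap \Kernel(\ThePath^{*}-\AltPath^{*})$ where the intersection runs over all pairs $(\ThePath,\AltPath)$ of paths agreeing in the first $\TheLength$ edges; each kernel is closed because the functionals are bounded (indeed $\NormOf{\ThePath^{*}-\AltPath^{*}}\le 2$), and an arbitrary intersection of closed subspaces is closed. Both arguments are essentially the same observation. There is no real obstacle here: the only thing to be slightly careful about is that one must take limits pointwise rather than, say, trying to argue via finite-dimensionality of the ``coordinate'' quotient — though one could also note that the restriction map sending $\FnPath$ to the function on admissible length-$\TheLength$ words it induces is well-defined and norm-nonincreasing on $\Depends[\TheLength]$, and $\Depends[\TheLength]$ is exactly the preimage of all of $\BoundedMaps{\EdgeSet^{\TheLength}_{\mathrm{adm}}}$ under a continuous map, hence closed. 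I would present the first, most elementary version in the paper.
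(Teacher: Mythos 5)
Your proof is correct and is essentially identical to the paper's argument: one passes to the pointwise limit of the identities $\FnPath^{(\ThrIndex)}(\ThePath)=\FnPath^{(\ThrIndex)}(\AltPath)$ for paths sharing their first $\TheLength$ edges, using that sup-norm convergence implies pointwise convergence. The alternative phrasing as an intersection of kernels of bounded functionals is a harmless repackaging of the same observation.
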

  \begin{proof}
    Let $(\FnPath[\TheIndex])$ be a Cauchy sequence in the subspace
    $\Depends[\TheLength]$ and let $\FnPath[\infty]$ be its limit in
    $\BoundedMaps{\PathSet}$. We have to show that $\FnPath[\infty]$
    lies in $\Depends[\TheLength]$, i.e., that
    \(
      \FnPathOf[\infty]{\ThePath}
      =
      \FnPathOf[\infty]{\AltPath}
    \)
    whenever $\ThePath$ and $\AltPath$ share the first $\TheLength$ edges.
    However, in this case, 
    \(
      \FnPathOf[\TheIndex]{\ThePath}
      =
      \FnPathOf[\TheIndex]{\AltPath}
    \)
    for each $\TheIndex$ and equality also holds in the limit.
  \end{proof}

  The inclusion operators
  $\Inclusion\mapcolon\MapsOn{\VertexSet}\rightarrow\MapsOn{\EdgeSet}$
  and $\Inclusion\mapcolon\MapsOn{\EdgeSet}\rightarrow\MapsOn{\PathSet}$
  when restricted to $\BoundedMaps{\VertexSet}$ and
  $\BoundedMaps{\EdgeSet}$, respectively, are bounded with operator
  norm~$1$. Note that $\Inclusion$ identifies $\BoundedMaps{\EdgeSet}$
  with $\Depends[1]$.

  The twisted gradient $\Gradient[\TheTwist]$ has operator norm at
  most $1+\AbsValueOf{\TheTwist}$.  The twisted rescaling operator
  $\Rescale[\TheTwist]$ as well as the summation operators
  $\NeighborSum$ and $\TurnSum$ are only bounded if there is a uniform
  bound on the degree of vertices in the graph $\TheGraph$.  For this
  reason, we will be especially interested in the case that there is a
  uniform upper bound $\MaxModDegree+1$ for the degree of vertices of the
  graph $\TheGraph$.

  \section{Lipschitz continuous functions}
  Our aim will be to study the an appropriate spectral theory of
  resonances for the transfer operator $\Tf$ in the spirit of
  \cite[\S~1.3]{Baladi}. In order to make this intrinsic discrete
  spectrum, often called Ruelle resonances, appear we need
  to let the transfer operator act on function spaces with a certain
  regularity. A standard choice is the scale of Banach spaces of
  Lipschitz continuous functions that we will introduce in this
  section.
  
  Fix a parameter $\MetricParameter\in(0,1)$ and define a metric (in
  fact, an ultrametric) $\Distance[\MetricParameter]$ on $\PathSet$ by:
  \[
    \DistanceOf[\MetricParameter]{
      \TheEdge[0]\turn\TheEdge[1]\turn\cdots
    }{
      \AltEdge[0]\turn\AltEdge[1]\turn\cdots
    }
    :=
    \MetricParameter^{\inf\SetOf[\TheIndex]{\TheEdge[\TheIndex]\neq\AltEdge[\TheIndex]}}
  \]
  The metric defines a topology on $\PathSet$, which does not depend
  on $\MetricParameter$.  In fact, the basic open sets admit a rather
  combinatorial description as follows.  For each vertex
  $\TheVertex\in\VertexSet$, we shall refer to the subset
  $\PathSet[\TheVertex]$ of edge paths starting at the vertex
  $\TheVertex$ as the \notion{island} designated by $\TheVertex$. Each
  edge path
  \(
    \ThePrefix = \TheEdge[1]\turn\cdots\turn\TheEdge[\TheLevel]
  \)
  of length $\TheLevel$ defines a \notion{district of level}
  $\TheLevel$ in the island designated by
  $\InitialOf{\TheEdge[1]}$. This district is the subset
  $\PathSet[\ThePrefix]$ of those infinite paths that start with the
  prefix $\ThePrefix$.  We think of $\ThePrefix$ as a \notion{postal
    code} with $\TheLevel$ digits that designates its district. The
  topology on $\PathSet$ is defined by regarding districts as basic open
  sets. Since the graph $\TheGraph$ is locally finite, each island is a
  totally disconnected compact set. In fact, one may think of points in
  $\PathSet$ as ends of a forest where each vertex
  $\TheVertex\in\VertexSet$ is a root and each $\TheLevel$-digit postal
  code is a vertex at distance $\TheLevel$ from its root. The children
  of a postal code $\ThePrefix$ are the one-digit-extensions
  $\ThePrefix\turn\AltEdge$. In this language, the space
  $\Depends[\TheLevel]$ consists of those bounded functions that are
  constant on districts of level $\TheLevel$ (and higher).

  Points on different islands, somewhat artificially, are at
  distance~$1$ in $\Distance[\MetricParameter]$. Consequently, any
  Lipschitz continuous function on $\PathSet$ is bounded. On the other
  hand, even a bounded function that is constant on islands may have a
  very large Lipschitz constant. We would like to think of the optimal
  Lipschitz constant for a function as akin to the sup-norm of its
  derivative, i.e., we would like Lipschitz constants to capture a more
  local behavior.

  We call a function
  $\FnPath\mapcolon\PathSet\rightarrow\ComplexNumbers$ \notion{uniformly
    $\Distance[\MetricParameter]$-Lipschitz continuous on islands} if
  there is a constant $\TheLipConst$ such that the restriction of
  $\FnPath$ to each island is $\Distance[\MetricParameter]$-Lipschitz
  continuous with Lipschitz constant $\TheLipConst$. Equivalently, one
  can require that
  \[
    \AbsValueOf{
      \FnPathOf{\ThePath}-\FnPathOf{\AltPath}
    }
    \leq
    \TheLipConst\DistanceOf[\MetricParameter]{\ThePath}{\AltPath}
  \]
  holds whenever $\ThePath$ and $\AltPath$ issue from the same
  vertex. Points in different level-$\TheLevel$ districts have at least
  distance $\MetricParameter^{\TheLevel-1}$. Consequently, we observe the
  following:
  \begin{observation}
    For a bounded function
    \(
      \FnPath\mapcolon\PathSet\rightarrow\ComplexNumbers
    \), the following are equivalent:
    \begin{enumerate}
      \item
        The map $\FnPath$ is $\Distance[\MetricParameter]$-Lipschitz
        continuous.
      \item
        The map $\FnPath$ is uniformly
        $\Distance[\MetricParameter]$-Lipschitz continuous on islands.
      \item
        The map $\FnPath$ is uniformly
        $\Distance[\MetricParameter]$-Lipschitz continuous on level-$1$
        districts.
      \item[$\vdots$]
      \item[m.]The map $\FnPath$ is uniformly
        $\Distance[\MetricParameter]$-Lipschitz continuous on level-$(m-2)$
        districts.
      \item[$\vdots$]
    \end{enumerate}
  \end{observation}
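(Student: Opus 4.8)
The plan is to split the chain of equivalences into the ``easy'' direction, which passes from coarser districts to finer ones, and the ``substantial'' direction, in which the boundedness of $\FnPath$ is used to climb back up; throughout, everything rests on the ultrametric structure of $\Distance[\MetricParameter]$. For the easy direction I would only observe that restricting a function which is $\TheLipConst$-Lipschitz on some subset of $\PathSet$ to a smaller subset yields a function which is again $\TheLipConst$-Lipschitz there. Since $\PathSet$ is the disjoint union of its islands, since each island is the disjoint union of its level-$1$ districts, and, more generally, since each level-$k$ district is the disjoint union of the level-$(k+1)$ districts it contains, this at once gives the implications $(1)\Rightarrow(2)\Rightarrow(3)\Rightarrow\cdots$, each time with the same Lipschitz constant. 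So the content of the Observation lies in reversing these implications.

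For the substantial direction the key point is that within a fixed island $\Distance[\MetricParameter]$ takes only the discrete values $1,\MetricParameter,\MetricParameter^{2},\ldots$, and that two paths issuing from the same vertex which lie in a common level-$k$ district but in two \emph{different} level-$(k+1)$ districts are at distance exactly $\MetricParameter^{k}$ (immediate from the definition of $\Distance[\MetricParameter]$). So suppose $\FnPath$ is uniformly Lipschitz on level-$(k+1)$ districts with constant $\TheLipConst$, and let $\ThePath\neq\AltPath$ lie in a common level-$k$ district. If they lie in a common level-$(k+1)$ district, then $\AbsValueOf{\FnPathOf{\ThePath}-\FnPathOf{\AltPath}}\leq\TheLipConst\DistanceOf[\MetricParameter]{\ThePath}{\AltPath}$ by hypothesis; otherwise $\DistanceOf[\MetricParameter]{\ThePath}{\AltPath}=\MetricParameter^{k}$, and the boundedness of $\FnPath$ gives
\[
  \AbsValueOf{\FnPathOf{\ThePath}-\FnPathOf{\AltPath}}
  \;\leq\;2\NormOf[\infty]{\FnPath}
  \;=\;2\NormOf[\infty]{\FnPath}\,\MetricParameter^{-k}\,\DistanceOf[\MetricParameter]{\ThePath}{\AltPath}\,.
\]
Hence $\FnPath$ is uniformly Lipschitz on level-$k$ districts with the single constant $\max\bigl(\TheLipConst,\,2\NormOf[\infty]{\FnPath}\MetricParameter^{-k}\bigr)$, which does not depend on the chosen district. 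Iterating this step downwards shows that every condition in the list implies condition~$(2)$, and running the very same argument once more with ``level-$k$ district'' replaced by all of $\PathSet$ and ``level-$(k+1)$ district'' by ``island'' — using that two paths on different islands are at distance $1$ — yields $(2)\Rightarrow(1)$, with global Lipschitz constant $\max\bigl(\TheLipConst_{0},\,2\NormOf[\infty]{\FnPath}\bigr)$ for any island-Lipschitz constant $\TheLipConst_{0}$ of $\FnPath$. Together with the easy direction, this establishes all the stated equivalences.

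I do not expect any genuine obstacle. The only point requiring some care is the bookkeeping relating the level of a district to the power of $\MetricParameter$ separating distinct sub-districts — the off-by-one coming from the indexing of edges along a path — together with the recognition that the boundedness hypothesis is genuinely needed: it is precisely what controls the ``intermediate'' scales, where neither the fine-district Lipschitz bound nor the coarse distance-$1$ convention applies on its own.
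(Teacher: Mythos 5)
Your proof is correct and follows exactly the route the paper intends: the Observation is stated there with only the one-sentence justification that points in different level-$\TheLevel$ districts are at distance at least $\MetricParameter^{\TheLevel-1}$, which is precisely the dichotomy (same sub-district: use the fine Lipschitz bound; different sub-districts: use $2\NormOf[\infty]{\FnPath}$ against the lower distance bound) that you carry out in detail. Your level/exponent bookkeeping is consistent with the paper's conventions, so there is nothing to add.
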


  Let $\LipMaps[\MetricParameter]$ denote the subspace of
  $\BoundedMaps{\PathSet}$ that consists of maps
  $\FnPath\mapcolon\PathSet\rightarrow\ComplexNumbers$ that are
  uniformly $\Distance[\MetricParameter]$-Lipschitz continuous on
  islands.
  As $\Depends[\TheLevel]$ consists of those functions that are
  constant on level-$\TheLevel$ districts, the previous observation
  immediately implies the following:
  \begin{lemma}
    $\Depends[\TheLevel]$ is a subspace of
    $\LipMaps[\MetricParameter]$ for any $\MetricParameter$, i.e., a
    function $\FnPath$ in $\Depends[\TheLevel]$ is Lipschitz continuous
    with respect to $\Distance[\MetricParameter]$.
  \end{lemma}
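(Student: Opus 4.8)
The plan is to read the statement off the preceding Observation. Fix a level $\TheLevel$ and a parameter $\MetricParameter\in(0,1)$, and take a function $\FnPath\in\Depends[\TheLevel]$. By definition of the filtration, $\FnPath$ is bounded and its value $\FnPathOf{\TheEdge[1]\turn\TheEdge[2]\turn\cdots}$ depends only on $\TheEdge[1],\ldots,\TheEdge[\TheLevel]$; equivalently, $\FnPath$ is constant on each district of level $\TheLevel$. Consequently the restriction of $\FnPath$ to any single level-$\TheLevel$ district is constant, hence $\Distance[\MetricParameter]$-Lipschitz continuous with Lipschitz constant $0$, and this bound is uniform over all level-$\TheLevel$ districts. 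Thus $\FnPath$ is uniformly $\Distance[\MetricParameter]$-Lipschitz continuous on level-$\TheLevel$ districts.

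Next I would invoke the Observation: this very property (concretely, the item of the Observation occurring at position $\TheLevel+2$) is one of the equivalent characterizations of $\FnPath$ being $\Distance[\MetricParameter]$-Lipschitz continuous, and in particular it is equivalent to $\FnPath$ being uniformly $\Distance[\MetricParameter]$-Lipschitz continuous on islands. The latter is exactly the condition defining membership in $\LipMaps[\MetricParameter]$, so $\FnPath\in\LipMaps[\MetricParameter]$. Since $\FnPath$ was an arbitrary element of $\Depends[\TheLevel]$ and both $\Depends[\TheLevel]$ and $\LipMaps[\MetricParameter]$ are linear subspaces of $\BoundedMaps{\PathSet}$, this gives the inclusion $\Depends[\TheLevel]\subseteq\LipMaps[\MetricParameter]$ claimed in the lemma, for every $\MetricParameter$.

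If one prefers to bypass the Observation, the conclusion also follows from a direct estimate. Suppose $\ThePath$ and $\AltPath$ issue from the same vertex and first differ in their $\TheIndex$-th edge. If $\TheIndex>\TheLevel$, then $\ThePath$ and $\AltPath$ agree on their first $\TheLevel$ edges, so $\FnPathOf{\ThePath}=\FnPathOf{\AltPath}$. If $\TheIndex\leq\TheLevel$, then $\DistanceOf[\MetricParameter]{\ThePath}{\AltPath}=\MetricParameter^{\TheIndex}\geq\MetricParameter^{\TheLevel}$ (as $\MetricParameter\in(0,1)$), hence $\AbsValueOf{\FnPathOf{\ThePath}-\FnPathOf{\AltPath}}\leq 2\NormOf[\infty]{\FnPath}\leq 2\MetricParameter^{-\TheLevel}\NormOf[\infty]{\FnPath}\,\DistanceOf[\MetricParameter]{\ThePath}{\AltPath}$. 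So $\FnPath$ is uniformly $\Distance[\MetricParameter]$-Lipschitz on islands, with constant $2\MetricParameter^{-\TheLevel}\NormOf[\infty]{\FnPath}$.

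I do not expect a genuine obstacle here: the lemma is little more than a reformulation of one clause of the Observation (or of the elementary estimate above). The only things to keep track of are the index shift (a level-$\TheLevel$ district matches the item of the Observation at position $\TheLevel+2$) and the fact that membership in $\LipMaps[\MetricParameter]$ also requires boundedness, which is automatic since $\Depends[\TheLevel]$ is by definition a subspace of $\BoundedMaps{\PathSet}$.
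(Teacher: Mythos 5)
Your proof is correct and is essentially the same argument the paper uses: the paper states that, since $\Depends[\TheLevel]$ consists of functions constant on level-$\TheLevel$ districts, the Observation immediately yields the lemma, and your first paragraph spells that out precisely (the direct estimate you add is a welcome but unnecessary sanity check).
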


  Let $\LevelLipConstOf[0]{\FnPath}$ be the infimum over all such
  $\TheLipConst>0$ for which
  \[
    \AbsValueOf{
      \FnPathOf{\ThePath}-\FnPathOf{\AltPath}
    }
    \leq
    \TheLipConst\DistanceOf[\MetricParameter]{\ThePath}{\AltPath}
  \]
  holds whenever $\ThePath$ and $\AltPath$ lie in the same island.
  I.e., $\LevelLipConstOf[0]{\FnPath}$ is an optimal uniform
  Lipschitz constant on islands. Similarly, let
  $\LevelLipConstOf[1]{\FnPath}$ be the optimal Lipschitz constant on
  level-$1$ districts; and more generally, let
  $\LevelLipConstOf[\TheLevel]{\FnPath}$ be the optimal Lipschitz
  constant on level-$\TheLevel$ districts.  Clearly, those optimal
  Lipschitz constants are subadditive, whence each
  $\LevelLipConstOf[\TheLevel]{\DummyArg}$ is a semi-norm.

  We define a norm on $\LipMaps[\MetricParameter]$ by:
  \[
    \InnerNormOf[\MetricParameter]{\FnPath}
    :=
    \LevelLipConstOf[1]{\FnPath}
    +
    \NormOf[\infty]{\FnPath}
  \]
  We chose the optimal Lipschitz constant on level-$1$ districts for
  technical reasons that shall become clear later.

  \begin{remark}
    A more conventional norm combines the sup-norm with a global
    Lipschitz constant.  Let $\InfLipConstOf[\MetricParameter]{\FnPath}$ be
    the infimum over all (global) Lipschitz constants for $\FnPath$, and
    put:
    \[
      \LipNormOf[\MetricParameter]{\FnPath}
      :=
      \InfLipConstOf[\MetricParameter]{\FnPath}
      +
      \NormOf[\infty]{\FnPath}
    \]

    The norms $\LipNormOf[\MetricParameter]{\DummyArg}$ and
    $\InnerNormOf[\MetricParameter]{\DummyArg}$ are easily seen to be
    equivalent:
    \[
      \InnerNormOf[\MetricParameter]{\FnPath}
      \leq
      \LipNormOf[\MetricParameter]{\FnPath}
      \leq
      3\InnerNormOf[\MetricParameter]{\FnPath}
    \]
    Here, we use the estimate
    \(
      \InfLipConstOf[\MetricParameter]{\FnPath}
      \leq
      \MaxOf{
        \LevelLipConstOf[\TheLevel]{\FnPath}
        ,
        2
        \MetricParameter^{1-\TheLevel}
        \NormOf[\infty]{\FnPath}
      }
    \)
    which again follows from the minimum distance
    $\MetricParameter^{\TheLevel-1}$ of points in different
    level-$\TheLevel$ districts.
  \end{remark}

  \begin{proposition}
    $\LipMaps[\MetricParameter]$ is a Banach space with respect to the
    norm $\InnerNormOf[\MetricParameter]{\DummyArg}$.  Moreover, each
    $\InnerNormOf[\MetricParameter]{\DummyArg}$-Cauchy sequence is a
    $\NormOf[\infty]{\DummyArg}$-Cauchy sequence and its
    $\NormOf[\infty]{\DummyArg}$-limit is its
    $\InnerNormOf[\MetricParameter]{\DummyArg}$-limit. In particular, the
    limit function is given by taking limits pointwise.
  \end{proposition}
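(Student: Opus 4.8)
The plan is to combine completeness of $\BoundedMaps{\PathSet}$ in the sup-norm with the lower semicontinuity of the seminorm $\LevelLipConstOf[1]{\DummyArg}$ under pointwise convergence. The norm axioms for $\InnerNormOf[\MetricParameter]{\DummyArg}$ are immediate: it is homogeneous and subadditive because $\LevelLipConstOf[1]{\DummyArg}$ is a seminorm (as observed above) and $\NormOf[\infty]{\DummyArg}$ is a norm, and $\InnerNormOf[\MetricParameter]{\FnPath}=0$ forces $\NormOf[\infty]{\FnPath}=0$, hence $\FnPath=0$. Since $\NormOf[\infty]{\FnPath}\le\InnerNormOf[\MetricParameter]{\FnPath}$, any $\InnerNormOf[\MetricParameter]{\DummyArg}$-Cauchy sequence $(\FnPath[\TheIndex])$ is automatically $\NormOf[\infty]{\DummyArg}$-Cauchy; as $\BoundedMaps{\PathSet}$ is complete in the sup-norm with pointwise limits, there is $\FnPath[\infty]\in\BoundedMaps{\PathSet}$, the pointwise limit of the $\FnPath[\TheIndex]$, with $\NormOf[\infty]{\FnPath[\TheIndex]-\FnPath[\infty]}\to0$. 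It remains to show $\FnPath[\infty]\in\LipMaps[\MetricParameter]$ and $\InnerNormOf[\MetricParameter]{\FnPath[\TheIndex]-\FnPath[\infty]}\to0$.

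The key step is the observation that $\LevelLipConstOf[1]{\DummyArg}$ (and likewise $\NormOf[\infty]{\DummyArg}$) is lower semicontinuous for pointwise convergence: if $\FnPath[\TheIndex]\to\FnPath[\infty]$ pointwise, then for any two paths $\ThePath,\AltPath$ lying in a common level-$1$ district,
\[
  \AbsValueOf{\FnPathOf[\infty]{\ThePath}-\FnPathOf[\infty]{\AltPath}}
  =\lim_{\TheIndex\to\infty}\AbsValueOf{\FnPathOf[\TheIndex]{\ThePath}-\FnPathOf[\TheIndex]{\AltPath}}
  \le\left(\liminf_{\TheIndex\to\infty}\LevelLipConstOf[1]{\FnPath[\TheIndex]}\right)\DistanceOf[\MetricParameter]{\ThePath}{\AltPath},
\]
and taking the supremum over such pairs gives $\LevelLipConstOf[1]{\FnPath[\infty]}\le\liminf_{\TheIndex\to\infty}\LevelLipConstOf[1]{\FnPath[\TheIndex]}$; the identical argument with a single path gives $\NormOf[\infty]{\FnPath[\infty]}\le\liminf_{\TheIndex\to\infty}\NormOf[\infty]{\FnPath[\TheIndex]}$. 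A Cauchy sequence is bounded, so $\liminf_{\TheIndex\to\infty}\LevelLipConstOf[1]{\FnPath[\TheIndex]}<\infty$; thus $\FnPath[\infty]$ is uniformly Lipschitz on level-$1$ districts, hence on islands (the two are equivalent by the Observation preceding the Proposition), i.e.\ $\FnPath[\infty]\in\LipMaps[\MetricParameter]$.

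For convergence in the stronger norm, fix $\TheEps>0$ and pick $\IndexBound$ with $\InnerNormOf[\MetricParameter]{\FnPath[\TheIndex]-\FnPath[\AltIndex]}<\TheEps$ for all $\TheIndex,\AltIndex\ge\IndexBound$; in particular $\LevelLipConstOf[1]{\FnPath[\TheIndex]-\FnPath[\AltIndex]}<\TheEps$ and $\NormOf[\infty]{\FnPath[\TheIndex]-\FnPath[\AltIndex]}<\TheEps$. Fixing $\TheIndex\ge\IndexBound$ and letting $\AltIndex\to\infty$, the functions $\FnPath[\TheIndex]-\FnPath[\AltIndex]$ tend pointwise to $\FnPath[\TheIndex]-\FnPath[\infty]$, so the lower semicontinuity just established yields $\LevelLipConstOf[1]{\FnPath[\TheIndex]-\FnPath[\infty]}\le\TheEps$ and $\NormOf[\infty]{\FnPath[\TheIndex]-\FnPath[\infty]}\le\TheEps$, whence $\InnerNormOf[\MetricParameter]{\FnPath[\TheIndex]-\FnPath[\infty]}\le2\TheEps$ for all $\TheIndex\ge\IndexBound$. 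This shows $\FnPath[\TheIndex]\to\FnPath[\infty]$ in $\InnerNormOf[\MetricParameter]{\DummyArg}$, and since $\FnPath[\infty]$ was produced as the pointwise (equivalently, $\NormOf[\infty]{\DummyArg}$-) limit, the last two assertions of the Proposition follow at once.

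There is no genuine obstacle here; the single point requiring care — and on which the whole argument turns — is the lower semicontinuity of $\LevelLipConstOf[1]{\DummyArg}$ and $\NormOf[\infty]{\DummyArg}$ under pointwise limits, which is exactly what upgrades ``the sup-norm limit exists'' to ``the limit lies in $\LipMaps[\MetricParameter]$ and the convergence already holds in $\InnerNormOf[\MetricParameter]{\DummyArg}$''. The rest is routine manipulation with the triangle inequality.
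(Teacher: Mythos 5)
Your proposal is correct and follows essentially the same route as the paper: deduce sup-norm convergence from $\NormOf[\infty]{\FnPath}\le\InnerNormOf[\MetricParameter]{\FnPath}$, use the uniform bound on the Lipschitz seminorms to place the pointwise limit in $\LipMaps[\MetricParameter]$, and then pass to the limit in the Cauchy estimate to get convergence in $\InnerNormOf[\MetricParameter]{\DummyArg}$. Your packaging of the last step as lower semicontinuity of $\LevelLipConstOf[1]{\DummyArg}$ under pointwise convergence is a slightly cleaner version of the paper's telescoping estimate through an intermediate $\FnPath[\AltIndex]$, but the underlying computation is the same.
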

  \begin{proof}
    Clearly,
    \(
      \NormOf[\infty]{\FnPath}
      \leq
      \InnerNormOf[\MetricParameter]{\FnPath}
    \)
    for any $\FnPath\in\LipMaps[\MetricParameter]$. Hence a
    $\InnerNormOf[\MetricParameter]{\DummyArg}$-Cauchy sequence
    $(\FnPath[\TheIndex])$ is also a $\NormOf[\infty]{\DummyArg}$-Cauchy
    sequence and converges in $\BoundedMaps{\PathSet}$. Let
    $\FnPath[\infty]$ be the $\NormOf[\infty]{\DummyArg}$-limit of
    $(\FnPath[\TheIndex])$.

    Since $(\FnPath[\TheIndex])$ is a Cauchy sequence, there is a
    uniform upper bound on
    $\InnerNormOf[\MetricParameter]{\FnPath[\TheIndex]}$ and therefore on
    $\InnerConstOf[\MetricParameter]{\FnPath[\TheIndex]}$. It follows that
    the pointwise limit $\FnPath[\infty]$ is uniformly Lipschitz
    continuous on top-level districts, whence
    $\FnPath[\infty]\in\LipMaps[\MetricParameter]$.

    It remains to show that
    $\InnerConstOf[\MetricParameter]{\FnPath[\TheIndex]-\FnPath[\infty]}$
    tends to $0$ as $\TheIndex$ goes to infinity. Note that for two edge
    paths $\ThePath$ and $\AltPath$ in the same top-level district, we
    have
    \[
      \AbsValueOf{
        (\FnPath[\infty]-\FnPath[\TheIndex])(\ThePath)
        -
        (\FnPath[\infty]-\FnPath[\TheIndex])(\AltPath)
      }
      \leq
      2\NormOf[\infty]{\FnPath[\infty]-\FnPath[\AltIndex]}
      +
      \InnerConstOf[\MetricParameter]{
        \FnPath[\AltIndex]-\FnPath[\TheIndex]
      }
      \DistanceOf[\MetricParameter]{\ThePath}{\AltPath}
    \]
    For $\TheEps>0$ there is an $\IndexBound$ so that
    \(
      \InnerConstOf[\MetricParameter]{\FnPath[\AltIndex]-\FnPath[\TheIndex] }
      <
      \TheEps
    \)
    whenever $\TheIndex,\AltIndex>\IndexBound$.  Since
    $\NormOf[\infty]{\FnPath[\infty]-\FnPath[\AltIndex]}$ tends to $0$ as
    $\AltIndex$ goes to infinity and
    \(
      \InnerConstOf[\MetricParameter]{
        \FnPath[\AltIndex]-\FnPath[\TheIndex]
      }
      \DistanceOf[\MetricParameter]{\ThePath}{\AltPath}
    \)
    stays bounded by
    \(
      \TheEps\DistanceOf[\MetricParameter]{\ThePath}{\AltPath}
    \), it follows that
    \(
      \InnerConstOf[\MetricParameter]{\FnPath[\TheIndex]-\FnPath[\infty]}
      \leq
      \TheEps
    \)
    for $\TheIndex$ sufficiently large.
  \end{proof}

  \begin{corollary}
    For any level $\TheLevel\in \NatNumbers$, the function space
    $\Depends[\TheLevel]$ is a closed $\Tf$ invariant subspace of
    $\LipMaps[\MetricParameter]$.
  \end{corollary}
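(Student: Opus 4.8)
This statement bundles three claims about $\Depends[\TheLevel]$: that it lies inside $\LipMaps[\MetricParameter]$, that it is closed there, and that it is carried into itself by $\Tf$. The first is exactly the Lemma already proved, so the plan is to deal only with the remaining two.

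For closedness I would reduce everything to facts already in hand. We know from the earlier Lemma that $\Depends[\TheLevel]$ is $\NormOf[\infty]{\DummyArg}$-closed in $\BoundedMaps{\PathSet}$, and the preceding Proposition tells us that the inclusion $\LipMaps[\MetricParameter]\hookrightarrow\BoundedMaps{\PathSet}$ is continuous and that $\InnerNormOf[\MetricParameter]{\DummyArg}$-limits agree with the corresponding $\NormOf[\infty]{\DummyArg}$-limits (both being pointwise limits). Since $\Depends[\TheLevel]$ sits inside $\LipMaps[\MetricParameter]$, it is precisely the preimage under that continuous inclusion of the $\NormOf[\infty]{\DummyArg}$-closed set $\Depends[\TheLevel]\subseteq\BoundedMaps{\PathSet}$, hence $\InnerNormOf[\MetricParameter]{\DummyArg}$-closed. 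Concretely: an $\InnerNormOf[\MetricParameter]{\DummyArg}$-Cauchy sequence in $\Depends[\TheLevel]$ converges to its pointwise limit, and that limit still takes equal values on any two paths sharing their first $\TheLevel$ edges, so it lies in $\Depends[\TheLevel]$.

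The $\Tf$-invariance is the combinatorial heart, and it is short. Let $\FnPath\in\Depends[\TheLevel]$, so $\FnPathOf{\TheEdge[1]\turn\TheEdge[2]\turn\cdots}$ depends only on $\TheEdge[1],\ldots,\TheEdge[\TheLevel]$. In
\[
  (\Tf\FnPath)(\TheEdge[1]\turn\TheEdge[2]\turn\cdots)
  =
  \Sum[{\TheEdge[0]\turn\TheEdge[1]}]{
    \FnPathOf{\TheEdge[0]\turn\TheEdge[1]\turn\TheEdge[2]\turn\cdots}
  }
\]
each summand depends only on $\TheEdge[0],\TheEdge[1],\ldots,\TheEdge[\TheLevel-1]$ (the dependence window has shifted by one slot), while the set over which $\TheEdge[0]$ runs --- the edges with $\TerminalOf{\TheEdge[0]}=\InitialOf{\TheEdge[1]}$ and $\TheEdge[0]\neq\OppositeOf{\TheEdge[1]}$ --- is determined by $\TheEdge[1]$ alone. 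Hence the right-hand side depends only on $\TheEdge[1],\ldots,\TheEdge[\TheLevel]$, i.e. $\Tf\FnPath\in\Depends[\TheLevel]$. It remains to note that $\Tf\FnPath$ is bounded, each of the sums above having $\ModDegreeOf{\InitialOf{\TheEdge[1]}}$ terms (uniformly bounded under our degree hypothesis), so that $\NormOf[\infty]{\Tf\FnPath}\leq\MaxModDegree\,\NormOf[\infty]{\FnPath}$; being bounded and constant on level-$\TheLevel$ districts, $\Tf\FnPath$ belongs to $\LipMaps[\MetricParameter]$ by the Lemma above.

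I do not anticipate a genuine obstacle here. The only point that needs a little care is the passage from $\NormOf[\infty]{\DummyArg}$-closedness to $\InnerNormOf[\MetricParameter]{\DummyArg}$-closedness: closedness in a stronger norm is not automatic from closedness in a weaker one, and what makes it work is exactly that the preceding Proposition identifies the two limits. The invariance then reduces to the one-line index shift displayed above.
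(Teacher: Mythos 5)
Your proposal is correct and follows essentially the same route as the paper: closedness is obtained from the fact that an $\InnerNormOf[\MetricParameter]{\DummyArg}$-Cauchy sequence is a $\NormOf[\infty]{\DummyArg}$-Cauchy sequence whose limit agrees in both norms, combined with the earlier lemmas, and the $\Tf$-invariance is read off directly from the defining formula via the index shift (the paper even records, as you implicitly do, that $\Tf$ maps $\Depends[\TheLevel]$ into $\Depends[\TheLevel-1]$ for $\TheLevel>1$). Your extra remark on boundedness of $\Tf\FnPath$ under the bounded-degree hypothesis is a harmless (indeed slightly more careful) addition.
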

  \begin{proof}
    A $\InnerNormOf[\MetricParameter]{\DummyArg}$-Cauchy sequence in
    $\Depends[\TheLevel]$ is a $\NormOf[\infty]{\DummyArg}$-Cauchy
    sequence. As $\Depends[\TheLevel]$ is closed in
    $\BoundedMaps{\PathSet}$, its $\LipNormOf[\infty]{\DummyArg}$-limit lies
    in $\Depends[\TheLevel]$ and it also agrees with the
    $\InnerNormOf[\MetricParameter]{\DummyArg}$-limit in
    $\LipMaps[\MetricParameter]$.
    
    The $\Tf$ invariance is directly seen by its definition 
    \[
      (\Tf\FnPath)(\TheEdge[1]\turn\TheEdge[2]\turn\cdots)
      =
      \Sum[{\TheEdge[0]\turn\TheEdge[1]}]{
        \FnPathOf{\TheEdge[0]\turn\TheEdge[1]\turn\TheEdge[2]\turn\cdots}
      }
    \]
    which even yields that $\Tf$ maps
    \(
      \Depends[\TheLevel]\to\Depends[\TheLevel-1]
    \)
    provided that $\TheLevel>1$.
  \end{proof}

  \begin{observation}
    A function $\FnPath\in\Depends[1]$ is constant on level-$1$
    districts. Hence
    $\InnerConstOf[\MetricParameter]{\FnPath}=0$. Therefore, the norms
    $\NormOf[\infty]{\DummyArg}$ and
    $\InnerNormOf[\MetricParameter]{\DummyArg}$ agree on the subspace
    $\Depends[1]$.
  \end{observation}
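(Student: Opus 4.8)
The plan is to unwind the definitions; there is nothing substantive to prove. First I would recall the combinatorial meaning of a level-$1$ district: it is the subset $\PathSet[\ThePrefix]$ attached to a one-digit postal code $\ThePrefix$, that is, the set of all non-backtracking infinite paths $\TheEdge[1]\turn\TheEdge[2]\turn\cdots$ whose first edge $\TheEdge[1]$ is prescribed. In particular, any two paths lying in a common level-$1$ district agree in their first coordinate.

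Next, since $\FnPath\in\Depends[1]$ means, by definition, that $\FnPathOf{\TheEdge[1]\turn\TheEdge[2]\turn\cdots}$ depends only on the first edge $\TheEdge[1]$, the restriction of $\FnPath$ to each level-$1$ district is constant; this is the first assertion, and it merely re-expresses the remark made when districts were introduced, namely that $\Depends[\TheLevel]$ consists of the bounded functions that are constant on level-$\TheLevel$ districts. I would then translate ``constant on level-$1$ districts'' into a statement about the optimal Lipschitz constant on such districts: for paths $\ThePath$ and $\AltPath$ in a common level-$1$ district we have $\AbsValueOf{\FnPathOf{\ThePath}-\FnPathOf{\AltPath}}=0$, so the defining inequality $\AbsValueOf{\FnPathOf{\ThePath}-\FnPathOf{\AltPath}}\le\TheLipConst\,\DistanceOf[\MetricParameter]{\ThePath}{\AltPath}$ holds for every $\TheLipConst>0$; passing to the infimum yields $\InnerConstOf[\MetricParameter]{\FnPath}=0$. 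Substituting this into the definition $\InnerNormOf[\MetricParameter]{\FnPath}=\InnerConstOf[\MetricParameter]{\FnPath}+\NormOf[\infty]{\FnPath}$ gives $\InnerNormOf[\MetricParameter]{\FnPath}=\NormOf[\infty]{\FnPath}$ for every $\FnPath\in\Depends[1]$, which is the asserted coincidence of the two norms on $\Depends[1]$.

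I do not anticipate any genuine obstacle. The only subtlety worth stating carefully is that $\InnerConstOf[\MetricParameter]{\DummyArg}$ is defined as an infimum of admissible Lipschitz constants $\TheLipConst>0$, not as a minimum, so the argument should conclude that every positive $\TheLipConst$ is admissible — forcing the infimum to be $0$ — rather than asserting that $0$ is itself an admissible constant.
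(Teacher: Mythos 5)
Your proposal is correct and follows exactly the reasoning the paper embeds in the statement of the observation itself: a function in $\Depends[1]$ depends only on the first edge, hence is constant on level-$1$ districts, so every positive constant is an admissible Lipschitz constant on such districts and the infimum $\InnerConstOf[\MetricParameter]{\FnPath}$ is $0$, whence $\InnerNormOf[\MetricParameter]{\FnPath}=\NormOf[\infty]{\FnPath}$. Your remark about the infimum versus minimum is a sensible precaution but does not change anything.
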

  This is the main technical advantage in choosing
  $\LevelLipConstOf[1]{\DummyArg}$ as our preferred semi-norm. As the
  space $\Depends[1]$ is the space of bounded functions on edges, the
  restriction of the transfer operator to $\Depends[1]$ is just reduced
  to the turn sum operator.
  \[
    (\Tf\FnEdge)(\TheEdge)
    =
    \Sum[{\AltEdge\turn\TheEdge}]{
      \FnEdge(\AltEdge)
    }.   
  \]
  \begin{lemma}\label{rescaling-bounds}
    Assume that the degree of vertices in $\TheGraph$ is bounded from
    above, i.e., $\ModDegreeOf{\TheVertex}\leq\MaxModDegree$ for each
    vertex $\TheVertex$ of $\TheGraph$.  For any
    $\FnPath\in\LipMaps[\MetricParameter]$, we have
    \begin{align*}
      \NormOf[\infty]{\Tf\FnPath}
      & \leq \MaxModDegree\NormOf[\infty]{\FnPath}
      \\
      \InnerConstOf[\MetricParameter]{ \Tf \FnPath }
      & \leq \MetricParameter \MaxModDegree
        \InnerConstOf[\MetricParameter]{\FnPath}
    \end{align*}
  \end{lemma}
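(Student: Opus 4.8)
The two inequalities concern the action of the transfer operator $\Tf$ on the Banach space $\LipMaps[\MetricParameter]$, so my plan is to unwind the definition of $\Tf$ and estimate the two pieces of the norm separately, using only the combinatorial fact that each vertex has at most $\MaxModDegree + 1$ neighbors, so each edge has at most $\MaxModDegree$ one-turn extensions on the left. For the sup-norm bound, I would fix a path $\ThePath = \TheEdge[1]\turn\TheEdge[2]\turn\cdots$ and write $(\Tf\FnPath)(\ThePath) = \Sum[{\TheEdge[0]\turn\TheEdge[1]}]{\FnPathOf{\TheEdge[0]\turn\ThePath}}$. The number of edges $\TheEdge[0]$ with $\TheEdge[0]\turn\TheEdge[1]$ equals the number of neighbors of $\InitialOf{\TheEdge[1]}$ other than $\TerminalOf{\TheEdge[1]}$, which is exactly $\ModDegreeOf{\InitialOf{\TheEdge[1]}} \leq \MaxModDegree$. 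Hence the triangle inequality gives $\AbsValueOf{(\Tf\FnPath)(\ThePath)} \leq \MaxModDegree\NormOf[\infty]{\FnPath}$, and taking the supremum over $\ThePath$ yields the first claim.

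\medskip
For the Lipschitz bound, I would take two paths $\ThePath = \TheEdge[1]\turn\TheEdge[2]\turn\cdots$ and $\AltPath = \AltEdge[1]\turn\AltEdge[2]\turn\cdots$ lying in the same level-$1$ district; that is, $\TheEdge[1] = \AltEdge[1]$, so they issue from the same vertex and their distance is at most $\MetricParameter$. The key observation is that because $\TheEdge[1] = \AltEdge[1]$, the two sums defining $(\Tf\FnPath)(\ThePath)$ and $(\Tf\FnPath)(\AltPath)$ are indexed by the \emph{same} set of prefixes $\TheEdge[0]$ with $\TheEdge[0]\turn\TheEdge[1]$. Therefore
\[
  (\Tf\FnPath)(\ThePath) - (\Tf\FnPath)(\AltPath)
  =
  \Sum[{\TheEdge[0]\turn\TheEdge[1]}]{
    \bigl(\FnPathOf{\TheEdge[0]\turn\ThePath} - \FnPathOf{\TheEdge[0]\turn\AltPath}\bigr)
  }.
\]
Now $\TheEdge[0]\turn\ThePath$ and $\TheEdge[0]\turn\AltPath$ lie in the same island (indeed in the same level-$2$ district with prefix $\TheEdge[0]\turn\TheEdge[1]$), and prepending one edge shifts the first index of disagreement up by one, so $\DistanceOf[\MetricParameter]{\TheEdge[0]\turn\ThePath}{\TheEdge[0]\turn\AltPath} = \MetricParameter\cdot\DistanceOf[\MetricParameter]{\ThePath}{\AltPath}$. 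Applying the definition of $\InnerConstOf[\MetricParameter]{\FnPath}$ (the optimal uniform Lipschitz constant on islands) to each summand and using that there are at most $\MaxModDegree$ of them, I get $\AbsValueOf{(\Tf\FnPath)(\ThePath) - (\Tf\FnPath)(\AltPath)} \leq \MaxModDegree\,\InnerConstOf[\MetricParameter]{\FnPath}\,\MetricParameter\,\DistanceOf[\MetricParameter]{\ThePath}{\AltPath}$, which is precisely the claimed bound $\InnerConstOf[\MetricParameter]{\Tf\FnPath} \leq \MetricParameter\MaxModDegree\,\InnerConstOf[\MetricParameter]{\FnPath}$.

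\medskip
The only subtle point — and what I would regard as the heart of the argument — is the interplay between the index set of the sum and the semi-norm being used: one must notice that restricting to paths in a common level-$1$ district forces $\TheEdge[1] = \AltEdge[1]$, which is exactly what makes the two sums comparable term by term (if the initial edges differed, the index sets would differ and no such cancellation would be available). This is the technical reason, foreshadowed in the text, for building the norm $\InnerNormOf[\MetricParameter]{\DummyArg}$ from the level-$1$ Lipschitz constant rather than the level-$0$ one. Everything else is bookkeeping: counting one-turn extensions via $\ModDegree$, and tracking how prepending an edge scales the ultrametric distance by $\MetricParameter$.
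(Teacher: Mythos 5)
Your proposal is correct and follows essentially the same argument as the paper: count the at most $\MaxModDegree$ one-turn extensions for the sup-norm bound, and for the Lipschitz bound use that two paths in a common level-$1$ district share their first edge, so the two sums have the same index set and prepending an edge multiplies the ultrametric distance by $\MetricParameter$. Your remark about why the level-$1$ semi-norm is the right choice matches the paper's stated motivation.
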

  \begin{proof}
    For the first inequality, recall
    \[
      (\Tf\FnPath)(\TheEdge[1]\turn\TheEdge[2]\turn\cdots)
      =
      \Sum[{\TheEdge[0]\turn\TheEdge[1]}]{
        \FnPathOf{\TheEdge[0]\turn\TheEdge[1]\turn\TheEdge[2]\turn\cdots}
      }
    \]
    There are at most $\MaxModDegree$ summands, each of which is
    bounded in absolute value by $\NormOf[\infty]{\FnPath}$.
    
    Consider two paths $\ThePath$ and $\AltPath$ on the same top-level
    district, say the one with postal code $\TheEdge$. Then, we find:
    \begin{align*}
      \AbsValueOf{
      (\Tf\FnPath)(\ThePath)
      -
      (\Tf\FnPath)(\AltPath)
      }
      & \leq
        \Sum[\AltEdge\turn\TheEdge]{
        \AbsValueOf{
        \FnPathOf{\AltEdge\turn\ThePath}
        -
        \FnPathOf{\AltEdge\turn\AltPath}
        }
        }
      \\
      & \leq
        \Sum[\AltEdge\turn\TheEdge]{
        \InnerConstOf[\MetricParameter]{\FnPath}
        \MetricParameter
        \DistanceOf[\MetricParameter]{\ThePath}{\AltPath}
        }
      \\
      & \leq
        \MaxModDegree
        \MetricParameter
        \InnerConstOf[\MetricParameter]{\FnPath}
        \DistanceOf[\MetricParameter]{\ThePath}{\AltPath}
    \end{align*}
    The claim follows.
  \end{proof}

  \begin{corollary}
    Assume that the degree of vertices in $\TheGraph$ is bounded from
    above.  Then the transfer operator $\Tf$ restricts to a bounded
    endomorphism on $\LipMaps[\MetricParameter]$. In fact,
    \(
      \MaxModDegree
      :=
      \Max{
        \SetOf[\ModDegreeOf{\TheVertex}]{
          \TheVertex\in\VertexSet
        }
      }
    \)
    is an upper bound for the operator norm of $\Tf$.
  \end{corollary}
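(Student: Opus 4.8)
The plan is to read the statement off directly from Lemma~\ref{rescaling-bounds}; essentially nothing remains beyond bookkeeping. There are two things to settle: that $\Tf$ sends $\LipMaps[\MetricParameter]$ into itself, and that the resulting operator norm for $\InnerNormOf[\MetricParameter]{\DummyArg}$ is at most $\MaxModDegree$. Linearity of $\Tf$ is clear from its definition, so these are the only points.

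For the first one I would fix $\FnPath\in\LipMaps[\MetricParameter]$ and apply Lemma~\ref{rescaling-bounds}. Its first estimate gives $\NormOf[\infty]{\Tf\FnPath}\leq\MaxModDegree\NormOf[\infty]{\FnPath}<\infty$, so $\Tf\FnPath$ is bounded; its second estimate gives $\InnerConstOf[\MetricParameter]{\Tf\FnPath}\leq\MetricParameter\MaxModDegree\,\InnerConstOf[\MetricParameter]{\FnPath}<\infty$, so $\Tf\FnPath$ has a finite optimal Lipschitz constant on level-$1$ districts. By the Observation that records the equivalence of the various uniform Lipschitz conditions, a bounded function which is uniformly $\Distance[\MetricParameter]$-Lipschitz on level-$1$ districts is uniformly $\Distance[\MetricParameter]$-Lipschitz on islands, hence lies in $\LipMaps[\MetricParameter]$. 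So $\Tf$ restricts to an endomorphism of $\LipMaps[\MetricParameter]$.

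For the norm bound I would simply add the two inequalities of Lemma~\ref{rescaling-bounds} and use $\MetricParameter\in(0,1)$, so that $\MetricParameter\MaxModDegree\leq\MaxModDegree$:
\[
  \InnerNormOf[\MetricParameter]{\Tf\FnPath}
  = \InnerConstOf[\MetricParameter]{\Tf\FnPath}+\NormOf[\infty]{\Tf\FnPath}
  \leq \MetricParameter\MaxModDegree\,\InnerConstOf[\MetricParameter]{\FnPath}+\MaxModDegree\NormOf[\infty]{\FnPath}
  \leq \MaxModDegree\bigl(\InnerConstOf[\MetricParameter]{\FnPath}+\NormOf[\infty]{\FnPath}\bigr)
  = \MaxModDegree\,\InnerNormOf[\MetricParameter]{\FnPath},
\]
whence the operator norm of $\Tf$ on $\LipMaps[\MetricParameter]$ is at most $\MaxModDegree$. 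I do not expect any obstacle: the substantive work — bounding the sup-norm by the factor $\MaxModDegree$ and, crucially, bounding the level-$1$ Lipschitz semi-norm by the strictly smaller factor $\MetricParameter\MaxModDegree$ — was already done in Lemma~\ref{rescaling-bounds}, and the two estimates combine into the single bound $\MaxModDegree$ precisely because the semi-norm is contracted by the factor $\MetricParameter<1$, leaving the crude sup-norm estimate as the dominant term.
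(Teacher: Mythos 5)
Your proof is correct and follows essentially the same route as the paper: apply the two estimates of Lemma~\ref{rescaling-bounds}, add them, and absorb the factor $\MetricParameter<1$ to get the bound $\MaxModDegree\InnerNormOf[\MetricParameter]{\FnPath}$. The only difference is that you spell out explicitly why $\Tf\FnPath$ lands back in $\LipMaps[\MetricParameter]$, which the paper leaves implicit; that is a harmless (indeed slightly more careful) addition, not a different argument.
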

  \begin{proof}
    Consider a function $\FnPath\in\LipMaps[\MetricParameter]$. We have
    \begin{align*}
      \InnerNormOf[\MetricParameter]{\Tf \FnPath}
      & = \InnerConstOf[\MetricParameter]{\Tf \FnPath}
        + \NormOf[\infty]{\Tf \FnPath}
      \\
      & \leq
        \MetricParameter\MaxModDegree
        \InnerConstOf[\MetricParameter]{\FnPath}
        +
        \MaxModDegree
        \NormOf[\infty]{\FnPath},
      \\
      & \leq
        \MaxModDegree
        (
        \InnerConstOf[\MetricParameter]{\FnPath}
        +
        \NormOf[\infty]{\FnPath}
        )
      \\
      & = \MaxModDegree\InnerNormOf[\MetricParameter]{\FnPath}
    \end{align*}
    The claim follows.    
  \end{proof}

  \section{The spectral radius}
  We continue under the standing hypothesis that the vertex degree is
  uniformly bounded in $\TheGraph$, and we put
  \(
    \MaxModDegree :=
    \Max{
      \SetOf[\ModDegreeOf{\TheVertex}]{
        \TheVertex\in\VertexSet
      }
    }
  \).
  Then $\Tf$ is a bounded operator with operator norm not exceeding
  $\MaxModDegree$. We want to describe the spectral theory of $\Tf$ and
  follow the well established ideas for subshifts of finite type that
  are for example described in \cite[Chapter~1]{Baladi}.

  It is easy to write down the explicit formula for powers
  $\Tf^\NumIterations$. We find:
  \[
    (\Tf^\NumIterations \FnPath)(\TheEdge[0]\turn\TheEdge[1]\turn\cdots)
    =
    \Sum[{
      \TheEdge[-\NumIterations]\turn\cdots\turn\TheEdge[-1]\turn\TheEdge[0]
    }]{
      \FnPathOf{\TheEdge[-\NumIterations]\turn\cdots\turn\TheEdge[-1]\turn
        \TheEdge[0]\turn\TheEdge[1]\turn\cdots}
    }
  \]
  If we apply $\Tf^\NumIterations$ to the constant function
  $\ConstOne$, we solve a counting problem for paths of length
  $\NumIterations$ that turn into a given edge $\TheEdge[0]$. We find:
  \[
    (\Tf^\NumIterations \ConstOne)(\TheEdge[0]\turn\TheEdge[1]\turn\cdots)
    =
    \CardOf{
      \SetOf{
        \TheEdge[-\NumIterations]\turn\cdots\turn\TheEdge[-1]\turn\TheEdge[0]
      }
    }
    \leq
    \MaxModDegree^\NumIterations
  \]
  We define:
  \[
    \PileHightOf[\NumIterations]{\TheEdge[0]} :=
    \CardOf{
      \SetOf{
        \TheEdge[-\NumIterations]\turn\cdots\turn\TheEdge[-1]\turn\TheEdge[0]
      }      
    }
  \]
  and:
  \[
    \PileHight[\NumIterations] :=
    \sup\SetOf[{\PileHightOf[\NumIterations]{\TheEdge[0]}}]{
      \TheEdge[0]\in\EdgeSet
    }
  \]
  The constant function $\ConstOne$ lies in $\Depends[1]$, and so do all
  iterates $\Tf^{\NumIterations}\ConstOne$. Thus, we find
  \(
    \PileHight[\NumIterations] =
    \NormOf[\infty]{ \Tf^{\NumIterations} \ConstOne }
    =
    \InnerNormOf[\MetricParameter]{ \Tf^{\NumIterations} \ConstOne }
    ,
  \)
  whence $\PileHight[\NumIterations]$ is a lower bound for the
  operator norm of $\Tf^{\NumIterations}$; and it does not even
  matter whether we consider the norm $\NormOf[\infty]{\cdot}$ or
  $\InnerNormOf[\MetricParameter]{\cdot}$ on
  $\LipMaps[\MetricParameter]$.

  On the other hand, by the same arguments as in
  Lemma~\ref{rescaling-bounds}, we find that for arbitrary
  $\FnPath\in\LipMaps[\MetricParameter]$, the following holds:
  \begin{align*}
    \NormOf[\infty]{\Tf^{\NumIterations} \FnPath}
    & \leq \PileHight[\NumIterations] \NormOf[\infty]{\FnPath}
    \\
    \InnerConstOf[\MetricParameter]{\Tf^{\NumIterations} \FnPath}
    & \leq
      \MetricParameter^{\NumIterations}
      \PileHight[\NumIterations] \InnerConstOf[\MetricParameter]{\FnPath}
    \\
    \InnerNormOf[\MetricParameter]{\Tf^{\NumIterations} \FnPath}
    & \leq
      \MetricParameter^{\NumIterations}
      \PileHight[\NumIterations] \InnerConstOf[\MetricParameter]{\FnPath}
      +
      \PileHight[\NumIterations] \NormOf[\infty]{\FnPath}
      =
      \PileHight[\NumIterations](
      \MetricParameter^{\NumIterations}
      \InnerConstOf[\MetricParameter]{\FnPath}
      +
      \NormOf[\infty]{\FnPath}
      )
      \leq
      \PileHight[\NumIterations] \InnerNormOf[\MetricParameter]{\FnPath}
  \end{align*}
  We deduce the following:
  \begin{observation}
    The operator norm of $\Tf^{\NumIterations}$ on
    $\LipMaps[\MetricParameter]$ is $\PileHight[\NumIterations]$. Here it
    does not matter whether $\LipMaps[\MetricParameter]$ is endowed with
    the norm $\NormOf[\infty]{\cdot}$ or
    $\InnerNormOf[\MetricParameter]{\cdot}$.
  \end{observation}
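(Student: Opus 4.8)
The plan is to combine the lower bound already recorded just before the statement with the upper bound supplied by the displayed chain of inequalities, noting that the single test function $\ConstOne$ makes the two bounds coincide for either norm.

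First I would read off the upper bound. The last display preceding the observation shows that for every $\FnPath\in\LipMaps[\MetricParameter]$ one has $\NormOf[\infty]{\Tf^{\NumIterations}\FnPath}\leq\PileHight[\NumIterations]\NormOf[\infty]{\FnPath}$ and $\InnerNormOf[\MetricParameter]{\Tf^{\NumIterations}\FnPath}\leq\PileHight[\NumIterations]\InnerNormOf[\MetricParameter]{\FnPath}$. Hence, whichever of the two equivalent norms we put on $\LipMaps[\MetricParameter]$, the operator norm of $\Tf^{\NumIterations}$ is at most $\PileHight[\NumIterations]$.

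Next I would produce the matching lower bound by testing against the constant function $\ConstOne$. Since $\ConstOne\in\Depends[1]$, its Lipschitz constant $\InnerConstOf[\MetricParameter]{\ConstOne}$ vanishes, so $\NormOf[\infty]{\ConstOne}=\InnerNormOf[\MetricParameter]{\ConstOne}=1$. Moreover $\Tf^{\NumIterations}\ConstOne$ is again a function on edges, namely $\TheEdge[0]\mapsto\PileHightOf[\NumIterations]{\TheEdge[0]}$, so it too lies in $\Depends[1]$, and $\NormOf[\infty]{\Tf^{\NumIterations}\ConstOne}=\InnerNormOf[\MetricParameter]{\Tf^{\NumIterations}\ConstOne}=\PileHight[\NumIterations]$ by the definition of $\PileHight[\NumIterations]$ as the supremum of $\PileHightOf[\NumIterations]{\TheEdge[0]}$ over $\TheEdge[0]\in\EdgeSet$. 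Therefore the operator norm of $\Tf^{\NumIterations}$ is at least $\PileHight[\NumIterations]$, again for either choice of norm.

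Putting the two bounds together yields that the operator norm of $\Tf^{\NumIterations}$ on $\LipMaps[\MetricParameter]$ equals $\PileHight[\NumIterations]$ and is insensitive to the choice between $\NormOf[\infty]{\cdot}$ and $\InnerNormOf[\MetricParameter]{\cdot}$. There is no real obstacle here: the only point worth spelling out is that $\ConstOne$ and all its $\Tf$-iterates sit inside $\Depends[1]$, the subspace on which the two norms agree, which is precisely what lets one extremal function serve both norms at once.
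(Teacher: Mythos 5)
Your argument is correct and coincides with the paper's own reasoning: the upper bound is exactly the displayed chain of inequalities $\NormOf[\infty]{\Tf^{\NumIterations}\FnPath}\leq\PileHight[\NumIterations]\NormOf[\infty]{\FnPath}$ and $\InnerNormOf[\MetricParameter]{\Tf^{\NumIterations}\FnPath}\leq\PileHight[\NumIterations]\InnerNormOf[\MetricParameter]{\FnPath}$, and the matching lower bound comes from testing on $\ConstOne$, whose iterates stay in $\Depends[1]$ where the two norms agree. Nothing is missing.
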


  \begin{corollary}\label{contraction}
    The spectral radius of $\Tf$ on $\LipMaps[\MetricParameter]$ as
    given by Gelfand's formula, is
    \(
      \TheRadius := \Lim[\NumIterations\rightarrow\infty]{
        \NormOf[\op]{\Tf^{\NumIterations}}^{1/\NumIterations}
      }
      =
      \Lim[\NumIterations\rightarrow\infty]{
        \PileHight[\NumIterations]^{1/\NumIterations}
      }
      \leq
      \MaxModDegree
    \).

    For any $\TheEps>0$, we have
    \begin{equation}\label{eq:key_inequality}
      \InnerNormOf[\MetricParameter]{\Tf^{\NumIterations} \FnPath}
      \leq
      (\TheRadius+\TheEps)^{\NumIterations}
      (
      \MetricParameter^{\NumIterations}
      \InnerConstOf[\MetricParameter]{\FnPath}
      +
      \NormOf[\infty]{\FnPath}
      )      
    \end{equation}
    for all $\NumIterations$ sufficiently large.
  \end{corollary}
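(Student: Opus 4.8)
The plan is to read off the corollary from the preceding Observation together with Gelfand's spectral radius formula, so there is essentially nothing to compute. First, since $\LipMaps[\MetricParameter]$ is a Banach space and $\Tf$ is a bounded endomorphism of it, Gelfand's formula applies and gives that the spectral radius of $\Tf$ equals $\Lim[\NumIterations\rightarrow\infty]{\NormOf[\op]{\Tf^{\NumIterations}}^{1/\NumIterations}}$, a genuine limit (equal to $\inf_{\NumIterations}\NormOf[\op]{\Tf^{\NumIterations}}^{1/\NumIterations}$ by submultiplicativity of the operator norm). The preceding Observation identifies $\NormOf[\op]{\Tf^{\NumIterations}}$ with $\PileHight[\NumIterations]$ and, crucially, records that this holds for both of the equivalent norms $\NormOf[\infty]{\DummyArg}$ and $\InnerNormOf[\MetricParameter]{\DummyArg}$, so the two norms yield the same spectral radius; substituting gives $\TheRadius = \Lim[\NumIterations\rightarrow\infty]{\PileHight[\NumIterations]^{1/\NumIterations}}$. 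One may also see the existence of this last limit by hand: the path-counting description of $\PileHight[\NumIterations]$ makes the submultiplicativity $\PileHight[\NumIterations+\AltIndex]\leq\PileHight[\NumIterations]\PileHight[\AltIndex]$ transparent, whence $\log\PileHight[\NumIterations]$ is subadditive and Fekete's lemma applies. The upper bound $\TheRadius\leq\MaxModDegree$ is then immediate from $\PileHight[\NumIterations]\leq\MaxModDegree^{\NumIterations}$, which was noted when $\PileHight[\NumIterations]$ was introduced.

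For inequality \pref{eq:key_inequality}, I would take as starting point the estimate $\InnerNormOf[\MetricParameter]{\Tf^{\NumIterations}\FnPath}\leq\PileHight[\NumIterations]\bigl(\MetricParameter^{\NumIterations}\InnerConstOf[\MetricParameter]{\FnPath}+\NormOf[\infty]{\FnPath}\bigr)$ that was already established, for every $\NumIterations$ and every $\FnPath\in\LipMaps[\MetricParameter]$, in the chain of displayed inequalities just before the Observation. Given $\TheEps>0$, the convergence $\PileHight[\NumIterations]^{1/\NumIterations}\to\TheRadius$ produces an index $\IndexBound$ past which $\PileHight[\NumIterations]\leq(\TheRadius+\TheEps)^{\NumIterations}$; inserting this bound into the estimate above yields \pref{eq:key_inequality} for all $\NumIterations\geq\IndexBound$.

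I do not expect any real obstacle: the whole statement is bookkeeping on top of the Observation. The only points worth stating carefully are that the limit defining $\TheRadius$ genuinely exists (handled by Gelfand's theorem, or alternatively by the Fekete remark) and that it is insensitive to which of the two equivalent norms on $\LipMaps[\MetricParameter]$ one uses — both facts are already packaged in the preceding Observation.
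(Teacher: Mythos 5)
Your proposal is correct and follows the same route as the paper, which gives no separate proof but reads the corollary off the preceding Observation (operator norm of $\Tf^{\NumIterations}$ equals $\PileHight[\NumIterations]$ in either norm) together with the displayed estimate $\InnerNormOf[\MetricParameter]{\Tf^{\NumIterations}\FnPath}\leq\PileHight[\NumIterations](\MetricParameter^{\NumIterations}\InnerConstOf[\MetricParameter]{\FnPath}+\NormOf[\infty]{\FnPath})$ and Gelfand's formula. Your added remarks on Fekete's lemma and on $\PileHight[\NumIterations]\leq(\TheRadius+\TheEps)^{\NumIterations}$ for large $\NumIterations$ are exactly the bookkeeping the paper leaves implicit.
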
    
  Note that \eqref{eq:key_inequality} is called \emph{Key inequality}
  in \cite[Lemma~1.2]{Baladi}. We will see that this inequality is indeed the key to deduce the quasi compactness of $\Tf$ on
  $\LipMaps[\MetricParameter]$.

  \section{Approximation by projection}
  We keep the assumption
  that there is a uniform upper bound for the degree of vertices in
  $\TheGraph$ and the notation
  \(
    \MaxModDegree :=
    \Max{
      \SetOf[\ModDegreeOf{\TheVertex}]{
        \TheVertex\in\VertexSet
      }
    }
  \).

  For each postal code $\TheCode$ of length $\TheLevel$, pick a point
  $\Prefer{\TheCode}$ in the district $\PathSet[\TheCode]$, and define
  \[
    \TheProjection[\TheLevel]\mapcolon\LipMaps[\MetricParameter]
    \longrightarrow \Depends[\TheLevel]
  \]
  by the requirement that $\TheProjection[\TheLevel]\FnPath$
  is constant on all level-$\TheLevel$ districts and 
  coincides with $\FnPath$ on the points $\Prefer{\TheCode}$ for
  all postal codes of length $\TheLevel$.

  \begin{observation}
    As the diameter of a level-$\TheLevel$ district is
    $\MetricParameter^{\TheLevel}$, we have
    \[
      \NormOf[\infty]{\FnPath - \TheProjection[\TheLevel]\FnPath}
      \leq
      \MetricParameter^{\TheLevel}
      \InnerConstOf[\MetricParameter]{\FnPath}
    \]
    by Lipschitz continuity. On the other hand,
    \[
      \InnerConstOf[\MetricParameter]{\TheProjection[\TheLevel]\FnPath}
      \leq
      \InnerConstOf[\MetricParameter]{\FnPath}     
    \]
    as the distance of points in different level-$\TheLevel$ districts does
    only depend on their $\TheLevel$-digit postal codes.
  \end{observation}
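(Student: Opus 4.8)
The plan is to read off both inequalities from the two defining features of $\TheProjection[\TheLevel]$ — that $\TheProjection[\TheLevel]\FnPath$ is constant on every level-$\TheLevel$ district (so it lies in $\Depends[\TheLevel]\subseteq\LipMaps[\MetricParameter]$) and that it agrees there with $\FnPath$ at the chosen representative point $\Prefer{\TheCode}$ — together with the ultrametric nature of $\Distance[\MetricParameter]$ and the fact that $\InnerConstOf[\MetricParameter]{\DummyArg}$ is by definition the optimal Lipschitz constant on level-$1$ districts. Implicit in the construction is $\TheLevel\geq 1$, so that every level-$\TheLevel$ district sits inside a single level-$1$ district; this is exactly the regime in which the level-$1$ Lipschitz estimate applies to the pairs of points that arise.

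For the first inequality I would fix $\FnPath\in\LipMaps[\MetricParameter]$ and a path $\ThePath\in\PathSet$, let $\TheCode$ be the length-$\TheLevel$ postal code of $\ThePath$, and note that both $\ThePath$ and $\Prefer{\TheCode}$ lie in the district $\PathSet[\TheCode]$, so that $(\TheProjection[\TheLevel]\FnPath)(\ThePath)=\FnPathOf{\Prefer{\TheCode}}$ while $\DistanceOf[\MetricParameter]{\ThePath}{\Prefer{\TheCode}}\leq\MetricParameter^{\TheLevel}$ (the diameter of a level-$\TheLevel$ district). Since $\PathSet[\TheCode]$ is contained in a level-$1$ district, the defining Lipschitz bound for $\InnerConstOf[\MetricParameter]{\FnPath}$ gives $\AbsValueOf{\FnPathOf{\ThePath}-\FnPathOf{\Prefer{\TheCode}}}\leq\MetricParameter^{\TheLevel}\InnerConstOf[\MetricParameter]{\FnPath}$; since the left-hand side equals $\AbsValueOf{\FnPathOf{\ThePath}-(\TheProjection[\TheLevel]\FnPath)(\ThePath)}$, taking the supremum over $\ThePath$ yields $\NormOf[\infty]{\FnPath-\TheProjection[\TheLevel]\FnPath}\leq\MetricParameter^{\TheLevel}\InnerConstOf[\MetricParameter]{\FnPath}$.

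For the second inequality I would verify the defining Lipschitz estimate for $\TheProjection[\TheLevel]\FnPath$ on an arbitrary level-$1$ district: given $\ThePath,\AltPath$ in such a district, if they moreover share a level-$\TheLevel$ district then $\TheProjection[\TheLevel]\FnPath$ is constant on it and the difference of values vanishes; otherwise, writing $\TheCode,\AltCode$ for the length-$\TheLevel$ postal codes of $\ThePath,\AltPath$, one has $\AbsValueOf{(\TheProjection[\TheLevel]\FnPath)(\ThePath)-(\TheProjection[\TheLevel]\FnPath)(\AltPath)}=\AbsValueOf{\FnPathOf{\Prefer{\TheCode}}-\FnPathOf{\Prefer{\AltCode}}}$. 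The \emph{only} genuine step — the part I would regard as the crux — is the identity $\DistanceOf[\MetricParameter]{\Prefer{\TheCode}}{\Prefer{\AltCode}}=\DistanceOf[\MetricParameter]{\ThePath}{\AltPath}$: because $\ThePath$ and $\AltPath$ lie in different level-$\TheLevel$ districts, their first coordinate of disagreement already occurs among the edges recorded by $\TheCode$ and $\AltCode$, hence is determined by those codes alone and is unchanged on passing to the extensions $\Prefer{\TheCode},\Prefer{\AltCode}$ (which, in particular, therefore again lie in a common level-$1$ district). Feeding this into the Lipschitz bound for $\InnerConstOf[\MetricParameter]{\FnPath}$ and then taking the infimum over admissible Lipschitz constants yields $\InnerConstOf[\MetricParameter]{\TheProjection[\TheLevel]\FnPath}\leq\InnerConstOf[\MetricParameter]{\FnPath}$. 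I do not anticipate a real obstacle: everything reduces to unwinding the ultrametric, the sole point needing a moment's thought being the distance-preservation just described, which is precisely the clause in the statement that ``the distance of points in different level-$\TheLevel$ districts does only depend on their $\TheLevel$-digit postal codes''.
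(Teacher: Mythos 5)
Your proposal is correct and follows exactly the route the paper intends: the paper's entire justification is the two parenthetical remarks in the statement (diameter of a level-$\TheLevel$ district, and distance between different level-$\TheLevel$ districts depending only on the $\TheLevel$-digit codes), and you have simply unwound these, correctly taking care of the one detail the paper glosses over, namely that the pairs of points compared do lie in a common level-$1$ district so that the seminorm $\InnerConstOf[\MetricParameter]{\DummyArg}$ applies.
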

  \begin{corollary}\label{iteratedTf}
    For any $\TheEps>0$ the following inequalities hold for all
    $\NumIterations$ sufficiently large:
    \begin{align*}
      \NormOf[\infty]{
      \Tf^{\NumIterations}\FnPath
      -
      \Tf^{\NumIterations}\TheProjection[\NumIterations]\FnPath
      }
      & 
        \leq
        (\TheRadius+\TheEps)^{\NumIterations}
        \MetricParameter^{\NumIterations}
        \InnerConstOf[\MetricParameter]{\FnPath}      
      \\
      \InnerNormOf[\MetricParameter]{
      \Tf^{\NumIterations}\FnPath
      -
      \Tf^{\NumIterations}\TheProjection[\NumIterations]\FnPath
      }
      &
        \leq
        (\TheRadius+\TheEps)^{\NumIterations}
        \MetricParameter^{\NumIterations}
        \InnerConstOf[\MetricParameter]{\FnPath}
    \end{align*}
  \end{corollary}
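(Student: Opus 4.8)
Here is a proof plan.

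The plan is to deduce both inequalities by applying the operator bounds from the section \emph{The spectral radius} to the single function $g_{\NumIterations} := \FnPath - \TheProjection[\NumIterations]\FnPath$, whose sup-norm and level-$1$ Lipschitz semi-norm are both controlled by the Observation immediately preceding the statement. The feature that makes this legitimate, even though $g_{\NumIterations}$ itself varies with $\NumIterations$, is that the estimates preceding Corollary~\ref{contraction} — namely $\NormOf[\infty]{\Tf^{\NumIterations}h}\leq\PileHight[\NumIterations]\NormOf[\infty]{h}$ and $\InnerNormOf[\MetricParameter]{\Tf^{\NumIterations}h}\leq\PileHight[\NumIterations](\MetricParameter^{\NumIterations}\InnerConstOf[\MetricParameter]{h}+\NormOf[\infty]{h})$ — hold for every $\NumIterations$ with the same constant $\PileHight[\NumIterations]$, independent of the function $h$ they are applied to.

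First I would record the data on $g_{\NumIterations}$. The preceding Observation gives $\NormOf[\infty]{g_{\NumIterations}}\leq\MetricParameter^{\NumIterations}\InnerConstOf[\MetricParameter]{\FnPath}$ together with $\InnerConstOf[\MetricParameter]{\TheProjection[\NumIterations]\FnPath}\leq\InnerConstOf[\MetricParameter]{\FnPath}$, so subadditivity of the semi-norm $\InnerConstOf[\MetricParameter]{\DummyArg}$ yields $\InnerConstOf[\MetricParameter]{g_{\NumIterations}}\leq 2\InnerConstOf[\MetricParameter]{\FnPath}$. Feeding $h=g_{\NumIterations}$ into the sup-norm bound gives $\NormOf[\infty]{\Tf^{\NumIterations}g_{\NumIterations}}\leq\PileHight[\NumIterations]\MetricParameter^{\NumIterations}\InnerConstOf[\MetricParameter]{\FnPath}$, and feeding it into the $\InnerNormOf[\MetricParameter]{\DummyArg}$ bound, where the bracket is at most $2\MetricParameter^{\NumIterations}\InnerConstOf[\MetricParameter]{\FnPath}+\MetricParameter^{\NumIterations}\InnerConstOf[\MetricParameter]{\FnPath}$, gives $\InnerNormOf[\MetricParameter]{\Tf^{\NumIterations}g_{\NumIterations}}\leq 3\PileHight[\NumIterations]\MetricParameter^{\NumIterations}\InnerConstOf[\MetricParameter]{\FnPath}$. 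By Corollary~\ref{contraction} we have $\PileHight[\NumIterations]^{1/\NumIterations}\to\TheRadius$, hence $\PileHight[\NumIterations]\leq(\TheRadius+\TheEps)^{\NumIterations}$ for $\NumIterations$ large, which already settles the first claimed inequality cleanly (with constant $1$); only the second still carries the stray factor $3$.

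The remaining, and essentially only, point of care is to absorb that factor $3$ into the exponential rate. Given $\TheEps>0$ I would fix $\TheEps'\in(0,\TheEps)$, use $\PileHight[\NumIterations]\leq(\TheRadius+\TheEps')^{\NumIterations}$ for $\NumIterations$ large, and observe that, the denominator $\TheRadius+\TheEps'$ being positive, the ratio $(\TheRadius+\TheEps)/(\TheRadius+\TheEps')$ exceeds $1$, so $((\TheRadius+\TheEps)/(\TheRadius+\TheEps'))^{\NumIterations}\geq 3$, i.e.\ $3(\TheRadius+\TheEps')^{\NumIterations}\leq(\TheRadius+\TheEps)^{\NumIterations}$, for all sufficiently large $\NumIterations$. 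This yields $\InnerNormOf[\MetricParameter]{\Tf^{\NumIterations}g_{\NumIterations}}\leq(\TheRadius+\TheEps)^{\NumIterations}\MetricParameter^{\NumIterations}\InnerConstOf[\MetricParameter]{\FnPath}$ for all large $\NumIterations$, which is the second claimed inequality, and it a fortiori re-proves the first since $\NormOf[\infty]{\DummyArg}\leq\InnerNormOf[\MetricParameter]{\DummyArg}$. I do not anticipate a genuine obstacle here: the argument is bookkeeping that combines the Observation with the growth rate $\TheRadius$, and the only thing one must not overlook is the $\NumIterations$-dependence of $g_{\NumIterations}$, which is harmless precisely because the constants $\PileHight[\NumIterations]$ appearing in the operator bounds do not depend on the function.
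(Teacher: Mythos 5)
Your proposal is correct and follows essentially the same route as the paper: both control $\NormOf[\infty]{\FnPath-\TheProjection[\NumIterations]\FnPath}$ and $\InnerConstOf[\MetricParameter]{\FnPath-\TheProjection[\NumIterations]\FnPath}$ via the preceding Observation, feed the difference into the operator bounds governed by $\PileHight[\NumIterations]$ (equivalently the Key inequality), and absorb the resulting factor $3$ by running the argument at a strictly smaller $\TheEps'$ (the paper uses $\TheEps/2$). No gap.
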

  \begin{proof}
    Using the previous observation, the first claim is immediate from:
    \[
      \NormOf[\infty]{\Tf^\NumIterations(\FnPath- \TheProjection[\NumIterations] \FnPath)}
      \leq R_m\NormOf[\infty]{\FnPath-\TheProjection[\NumIterations]\FnPath}
      \leq (\TheRadius+\TheEps)^\NumIterations \MetricParameter^\NumIterations
      \InnerConstOf[\MetricParameter]{\FnPath}
    \]
    The second follows from the observation in conjunction with
    Corollary~\ref{contraction}, applied to $\frac{\TheEps}{2}$. We obtain
    using, the Key inequality \eqref{eq:key_inequality},
    \begin{align*}
      \InnerNormOf[\MetricParameter]{
      \Tf^{\NumIterations}\FnPath
      -
      \Tf^{\NumIterations}\TheProjection[\NumIterations]\FnPath
      }
      &
        \leq
        (\TheRadius+\frac{\TheEps}{2})^{\NumIterations}
        (
        \MetricParameter^{\NumIterations}
        \InnerConstOf[\MetricParameter]{
        \FnPath - \TheProjection[\TheLevel]{\FnPath}
        }
        +
        \NormOf[\infty]{
        \FnPath - \TheProjection[\TheLevel]{\FnPath}
        }        
        )
      \\
      &
        \leq
        3
        (\TheRadius+\frac{\TheEps}{2})^{\NumIterations}
        \MetricParameter^{\NumIterations}
        \InnerConstOf[\MetricParameter]{\FnPath}
      \\
      &
        \leq
        (\TheRadius+\TheEps)^{\NumIterations}
        \MetricParameter^{\NumIterations}
        \InnerConstOf[\MetricParameter]{\FnPath}
    \end{align*}
    for sufficiently large $\NumIterations$.
  \end{proof}

  \begin{proposition}\label{approximation-by-subspaces}
    Let $\TheTwist\in\ComplexNumbers$ satisfy $\AbsValueOf{\TheTwist} >
    \MetricParameter \TheRadius$. Then, the equalizer
    \(
      \Equalizer{\Tf}{\TheTwist\Identity}
    \)
    on $\LipMaps[\MetricParameter]$ is a subspace of $\Depends[1]$.
  \end{proposition}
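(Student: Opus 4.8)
The plan is to exploit that an eigenfunction is reproduced by iterating the transfer operator, and then to approximate those iterates by functions that visibly lie in $\Depends[1]$. So let $\FnPath\in\Equalizer{\Tf}{\TheTwist\Identity}\subseteq\LipMaps[\MetricParameter]$, so that $\Tf^{\NumIterations}\FnPath=\TheTwist^{\NumIterations}\FnPath$ for every $\NumIterations\in\NatNumbers$. Since $\AbsValueOf{\TheTwist}>\MetricParameter\TheRadius\geq 0$ forces $\TheTwist\neq 0$, we may rewrite this as $\FnPath=\TheTwist^{-\NumIterations}\Tf^{\NumIterations}\FnPath$.

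First I would fix $\TheEps>0$ small enough that $\MetricParameter(\TheRadius+\TheEps)<\AbsValueOf{\TheTwist}$, which is possible by the hypothesis on $\TheTwist$. For each sufficiently large $\NumIterations$, set
\[
  \FnPath[\NumIterations] := \TheTwist^{-\NumIterations}\,\Tf^{\NumIterations}\TheProjection[\NumIterations]\FnPath .
\]
Recall that $\TheProjection[\NumIterations]\FnPath\in\Depends[\NumIterations]$, that $\Tf$ maps $\Depends[\TheLevel]$ into $\Depends[\TheLevel-1]$ for $\TheLevel>1$, and that $\Tf$ maps $\Depends[1]$ into itself; hence applying $\Tf$ a total of $\NumIterations$ times sends $\TheProjection[\NumIterations]\FnPath$ into $\Depends[1]$, so that $\FnPath[\NumIterations]\in\Depends[1]$.

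Next I would estimate the distance from $\FnPath$ to $\FnPath[\NumIterations]$ in $\LipMaps[\MetricParameter]$. Using $\Tf^{\NumIterations}\FnPath=\TheTwist^{\NumIterations}\FnPath$ together with the second inequality of Corollary~\ref{iteratedTf} (applied with the $\TheEps$ fixed above), we obtain
\[
  \InnerNormOf[\MetricParameter]{\FnPath-\FnPath[\NumIterations]}
  = \AbsValueOf{\TheTwist}^{-\NumIterations}\,
    \InnerNormOf[\MetricParameter]{
      \Tf^{\NumIterations}\FnPath-\Tf^{\NumIterations}\TheProjection[\NumIterations]\FnPath
    }
  \leq
  \left(\frac{\MetricParameter(\TheRadius+\TheEps)}{\AbsValueOf{\TheTwist}}\right)^{\NumIterations}
  \InnerConstOf[\MetricParameter]{\FnPath}
\]
for all $\NumIterations$ large enough. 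Since the base of the power is strictly less than $1$ and $\InnerConstOf[\MetricParameter]{\FnPath}<\infty$, the right-hand side tends to $0$, so $\FnPath[\NumIterations]\to\FnPath$ in $\LipMaps[\MetricParameter]$. As $\Depends[1]$ is a closed subspace of $\LipMaps[\MetricParameter]$ and each $\FnPath[\NumIterations]$ lies in it, the limit $\FnPath$ lies in $\Depends[1]$ as well, which is the claim.

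I do not expect a serious obstacle here: the Key inequality from Corollary~\ref{contraction} and the projection bounds feeding Corollary~\ref{iteratedTf} already do the work. The one point that needs care is to couple the level of the projection $\TheProjection[\NumIterations]$ to the number $\NumIterations$ of iterations, so that $\Tf^{\NumIterations}$ collapses $\TheProjection[\NumIterations]\FnPath$ all the way down to $\Depends[1]$ while the geometric factor $\bigl(\MetricParameter(\TheRadius+\TheEps)/\AbsValueOf{\TheTwist}\bigr)^{\NumIterations}$ remains a genuine contraction; this last point is precisely where the hypothesis $\AbsValueOf{\TheTwist}>\MetricParameter\TheRadius$ is used.
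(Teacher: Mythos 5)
Your proof is correct and takes essentially the same approach as the paper: write $\FnPath=\TheTwist^{-\NumIterations}\Tf^{\NumIterations}\TheProjection[\NumIterations]\FnPath+\TheTwist^{-\NumIterations}\Tf^{\NumIterations}(\FnPath-\TheProjection[\NumIterations]\FnPath)$, observe the first term lies in $\Depends[1]$, and use Corollary~\ref{iteratedTf} to show the error tends to zero. The only cosmetic difference is that you use the $\InnerNormOf[\MetricParameter]{\DummyArg}$ estimate and closedness of $\Depends[1]$ in $\LipMaps[\MetricParameter]$, while the paper uses the $\NormOf[\infty]{\DummyArg}$ estimate and closedness in $\BoundedMaps{\PathSet}$; both are valid.
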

  \begin{proof}
    Choose $\TheEps>0$ such that
    $\AbsValueOf{\TheTwist}>(\MetricParameter+\TheEps)(\TheRadius+\TheEps)$.
    Note that $\NormOf[\op]{\Tf^\NumIterations}
    \leq(\TheRadius+\TheEps)^\NumIterations$ for all sufficiently large
    $\NumIterations$.

    Consider $\FnPath$ in the equalizer
    \(
      \Equalizer{\Tf}{\TheTwist\Identity}
    \).
    Put
    \(
      \FnPath[\TheLength] := \TheProjection[\TheLength]\FnPath
    \)
    and
    \(
      \FnDiff[\TheLength]:=\FnPath-\FnPath[\TheLength]
      .
    \)
    
    As $\Tf\FnPath=\TheTwist\FnPath$, we find:
    \begin{align*}
      \FnPath
      & =
        \TheTwist^{-\TheLength}\Tf^{\TheLength}\FnPath
      \\
      & =
        \TheTwist^{-\TheLength}\Tf^{\TheLength}\FnPath[\TheLength]
        +
        \TheTwist^{-\TheLength}\Tf^{\TheLength}\FnDiff[\TheLength]        
    \end{align*}
    Note that
    \(
      \TheTwist^{-\TheLength}\Tf^{\TheLength}\FnPath[\TheLength]\in\Depends[1]
      .
    \)
    From the previous corollary, we have
    \[
      \NormOf[\infty]{\TheTwist^{-\TheLength}\Tf^{\TheLength}\FnDiff[\TheLength]}
      \leq
      \frac{
        (\TheRadius+\TheEps)^{\TheLength}
        \MetricParameter^{\TheLength}
        \InnerConstOf[\MetricParameter]{\FnPath}
      }{
        (\TheRadius+\TheEps)^{\TheLength}
        (\MetricParameter+\TheEps)^{\TheLength}
      }
      \leq
      \frac{\MetricParameter^{\TheLength}}{(\MetricParameter+\TheEps)^{\TheLength}}
      \InnerConstOf[\MetricParameter]{\FnPath}
    \]
    for sufficiently large $\TheLength$.
    It follows that $\FnPath$ is the $\NormOf[\infty]{\cdot}$-limit of the
    sequence
    \(
      (\TheTwist^{-\TheLength}\Tf^{\TheLength}\FnPath[\TheLength])
    \)
    which stays in the closed subspace $\Depends[1]$. Hence
    $\FnPath\in\Depends[1]$.
  \end{proof}

  \section{The case of finite graphs}
  For a finite graph $\TheGraph$, the spaces $\Depends[\TheLength]$
  have finite dimension and we can use the estimates from
  Corollary~\ref{iteratedTf} to bound the essential spectral
  radius.
  \begin{proposition}\label{essential-spectrum}
    Assume that the graph $\TheGraph$ is finite. Then the essential
    spectral radius $\TheRadius[\ess]$ of
    \(
      \Tf[\MetricParameter]:=\Tf|_{\LipMaps[\MetricParameter]}
    \)
    does not exceed $\MetricParameter\TheRadius$.
  \end{proposition}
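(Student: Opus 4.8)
The plan is to write each power $\Tf^{\NumIterations}$ of the transfer operator on $\LipMaps[\MetricParameter]$ as a finite-rank operator plus an operator whose norm decays like $(\MetricParameter\TheRadius)^{\NumIterations}$, and then to deduce the bound on $\TheRadius[\ess]$ from the standard functional-analytic characterization of the essential spectral radius. The hypothesis that $\TheGraph$ is finite enters at exactly one point: for each length $\TheLength$ there are only finitely many postal codes of length $\TheLength$, so $\Depends[\TheLength]$ is finite-dimensional.

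First I would record that the projections $\TheProjection[\NumIterations]\colon\LipMaps[\MetricParameter]\rightarrow\Depends[\NumIterations]$ introduced above are bounded, with $\InnerNormOf[\MetricParameter]{\cdot}$-operator norm at most $2$: this is immediate from the Observation preceding Corollary~\ref{iteratedTf}, which gives $\InnerConstOf[\MetricParameter]{\TheProjection[\NumIterations]\FnPath}\leq\InnerConstOf[\MetricParameter]{\FnPath}$ and $\NormOf[\infty]{\TheProjection[\NumIterations]\FnPath}\leq\NormOf[\infty]{\FnPath}+\MetricParameter^{\NumIterations}\InnerConstOf[\MetricParameter]{\FnPath}$, combined with $\MetricParameter<1$. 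Since $\TheGraph$ is finite, $\Depends[\NumIterations]$ is finite-dimensional, so $\TheProjection[\NumIterations]$ has finite rank, and therefore so does $\Tf^{\NumIterations}\TheProjection[\NumIterations]$.

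Now fix $\TheEps>0$. By Corollary~\ref{iteratedTf}, together with the trivial bound $\InnerConstOf[\MetricParameter]{\FnPath}\leq\InnerNormOf[\MetricParameter]{\FnPath}$, for all sufficiently large $\NumIterations$ one has
\[
  \NormOf[\op]{\,\Tf^{\NumIterations}-\Tf^{\NumIterations}\TheProjection[\NumIterations]\,}
  \leq
  (\TheRadius+\TheEps)^{\NumIterations}\MetricParameter^{\NumIterations}.
\]
Writing $\NormOf[\ess]{T}$ for the distance of $T$, in operator norm, to the ideal of compact operators on $\LipMaps[\MetricParameter]$, this gives $\NormOf[\ess]{\Tf^{\NumIterations}}\leq(\TheRadius+\TheEps)^{\NumIterations}\MetricParameter^{\NumIterations}$ for all large $\NumIterations$. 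Since $\NormOf[\ess]{\cdot}$ is submultiplicative, Nussbaum's formula $\TheRadius[\ess]=\Lim[\NumIterations\rightarrow\infty]{\NormOf[\ess]{\Tf^{\NumIterations}}^{1/\NumIterations}}$ (the limit existing by Fekete's lemma) yields $\TheRadius[\ess]\leq(\TheRadius+\TheEps)\MetricParameter$, and letting $\TheEps\downarrow 0$ gives $\TheRadius[\ess]\leq\MetricParameter\TheRadius$.

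I do not anticipate any real difficulty here: all of the quantitative content has already been assembled in Corollaries~\ref{contraction} and~\ref{iteratedTf}, and what remains is the routine packaging of a Lasota--Yorke-type estimate into a quasi-compactness statement. The only point requiring some care is quoting the correct general input --- Nussbaum's formula above, or, equivalently, Hennion's theorem as in \cite[\S~1.3]{Baladi}; for the latter one uses the Doeblin--Fortet inequality $\InnerNormOf[\MetricParameter]{\Tf^{\NumIterations}\FnPath}\leq(\TheRadius+\TheEps)^{\NumIterations}\MetricParameter^{\NumIterations}\InnerNormOf[\MetricParameter]{\FnPath}+(\TheRadius+\TheEps)^{\NumIterations}\NormOf[\infty]{\FnPath}$ obtained from Corollary~\ref{contraction}, together with the relative compactness of the $\InnerNormOf[\MetricParameter]{\cdot}$-unit ball in $\NormOf[\infty]{\cdot}$ (Arzel\`a--Ascoli, valid because $\PathSet$ is compact when $\TheGraph$ is finite). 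Either route gives the stated bound $\MetricParameter\TheRadius$.
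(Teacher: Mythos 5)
Your proof is correct and uses the same core ingredients as the paper: the same finite-rank approximants $\Tf^{\NumIterations}\TheProjection[\NumIterations]$ (finite rank because $\TheGraph$ finite implies $\dim\Depends[\NumIterations]<\infty$) and the same estimate from Corollary~\ref{iteratedTf} giving $\NormOf[\op]{\Tf^{\NumIterations}-\Tf^{\NumIterations}\TheProjection[\NumIterations]}\leq(\TheRadius+\TheEps)^{\NumIterations}\MetricParameter^{\NumIterations}$. The only difference is in the final packaging: where the paper spells out the Riesz decomposition and holomorphic functional calculus to pass from a bound on the essential spectral radius of $\Tf^{\NumIterations}$ to one on $\Tf$, you invoke Nussbaum's formula (or alternatively Hennion's theorem) as a black box, which is a perfectly standard and slightly more economical way to close the argument.
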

  \begin{proof}
    If $\TheGraph$ is finite, the spaces $\Depends[\TheLength]$ all
    have finite dimension. Consequently, the projection operators
    \(
      \FiniteRank[\NumIterations]
      :=
      \Tf[\MetricParameter]^{\NumIterations}
      \TheProjection[\NumIterations]
    \)
    have finite rank. 
    
    Corollary~\ref{iteratedTf} and
    \(
      \InnerConstOf[\MetricParameter]{\FnPath}
      \leq
      \InnerNormOf[\MetricParameter]{\FnPath}
    \)
    imply that for any $\varepsilon>0$ the operator norm of
    \(
      \Tf[\MetricParameter]^{\NumIterations}-\FiniteRank[\NumIterations]
    \)
    is bounded from above by
    $\MetricParameter^{\NumIterations}(\TheRadius+\TheEps)^{\NumIterations}$
    for sufficiently large $\NumIterations$. Consequently for such
    $\NumIterations$ the essential spectral radius of
    $\Tf[\MetricParameter]^\NumIterations$ is bounded by
    $\MetricParameter^{\NumIterations}(\TheRadius+\TheEps)^{\NumIterations}$
    and by standard spectral theory one deduces that the
    essential spectral radius of $\Tf[\MetricParameter]$ is bounded by
    $\MetricParameter(\TheRadius+\TheEps)$.

    Here are the details. First, note that if the essential spectral
    radius of $\Tf[\MetricParameter]^\NumIterations$ on a Banach space
    $\TheBanachSpace$ is $\TheRadius[0]$, then $\TheBanachSpace$
    decomposes as a direct sum
    \[
      \TheBanachSpace=
      \TheCore
      \directsum (\DirectSum[
      \TheEigenvalue\in\SpectrumOf{\Tf[\MetricParameter]^{\NumIterations}},
      \AbsValueOf{\TheEigenvalue} > {\TheRadius[0]}]{
        \TheEigenspace[\TheEigenvalue]
      })
    \]
    where $\TheEigenspace[\TheEigenvalue]$ is the finite dimensional
    generalized eigenspace $\ker(\Tf[\MetricParameter]^\NumIterations-\TheEigenvalue)^J$
    and there are only finitely
    many $\TheEigenvalue$ occurring in the sum.  As $\Tf[\MetricParameter]$ commutes with
    $\Tf[\MetricParameter]^\NumIterations$ all these $\TheEigenspace[\TheEigenvalue]$ are
    $\Tf[\MetricParameter]$ invariant and as they are finite dimensional,
    $(\Tf[\MetricParameter]-\ComplexVar)^{-1}$ is meromorphic with finite rank residues on
    all $\TheEigenspace[\TheEigenvalue]$.

    The remaining space $\TheCore$ is
    $\Tf[\MetricParameter]^\NumIterations$ invariant and the spectral
    radius of $\Tf[\MetricParameter]^\NumIterations$ is $\leq
    \TheRadius[0]$ when restricted to $\TheCore$. But this implies
    directly that for $\TheCoreVector\in \TheCore$ the Neumann series of
    the resolvent
    \(
      (\Tf[\MetricParameter]-\ComplexVar)^{-1}\TheCoreVector
      =
      \sum_{k=0}^\infty -\frac{\Tf[\MetricParameter]^k}{\ComplexVar^k+1}\TheCoreVector
    \)
    converges for $\AbsValueOf{\ComplexVar}\geq
    \TheRadius[0]^{1/\NumIterations}$ and is therefore holomorphic in this
    domain. Thus we have shown that the resolvent
    $(\Tf[\MetricParameter]-\ComplexVar)^{-1}:\TheBanachSpace\to
    \TheBanachSpace$ is meromorphic with finite rank residues for
    $\AbsValueOf{\ComplexVar}>\TheRadius[0]^{1/\NumIterations}$. So the
    spectrum of $\Tf[\MetricParameter]$ is discrete for
    $\AbsValueOf{\ComplexVar}>\TheRadius[0]^{1/\NumIterations}$ and by
    holomorphic functional calculus we know that the spectral projectors
    on the spectral values at these points are given by the residues and
    are thus finite dimensional. In this way we have bounded the essential
    spectral radius of $\Tf[\MetricParameter]$ by
    $\TheRadius[0]^{1/\NumIterations}$.
  \end{proof}
  \begin{remark}\label{rem:Ruelle resonance}
    Proposition~\ref{essential-spectrum} is a well known fact in the
    setting of subshifts of finite type (see e.g. \cite[Chapter~1]{Baladi}
    for a comprehensive treatment). The discrete spectrum appearing for
    $\AbsValueOf{\TheTwist}\geq\MetricParameter\TheRadius$ is called the
    spectrum of \emph{Ruelle resonances} of the transfer operator $\mathcal L_\vartheta$.
  \end{remark}

  Thus, in this case, we have a very satisfying picture; and our
  discussion so far can be summarized as follows:
  \begin{theorem}\label{main}
    Fix a parameter $\MetricParameter\in(0,1)$.  Let $\TheGraph$ be a
    connected finite graph without terminal vertices.  The transfer
    operator $\Tf[\MetricParameter]$ on $\LipMaps[\MetricParameter]$
    is bounded.  Its spectral radius
    \(
      \TheRadius =
      \Lim[\NumIterations\rightarrow\infty]{        
        \NormOf[\op]{\Tf[\MetricParameter]^{\NumIterations}}^{1/\NumIterations}
      }
    \)
    does not depend on $\MetricParameter$ and is
    bounded from above by the maximum
    \(
      \MaxModDegree :=
      \Max{
        \SetOf[\ModDegreeOf{\TheVertex}]{
          \TheVertex\in\VertexSet
        }
      }
    \)
    where equality holds if  and only if $\TheGraph$ is a regular graph
    (i.e., all vertices have the same degree).

    Furthermore, any $\TheEigenvalue$ in the spectrum of the transfer
    operator $\Tf[\MetricParameter]$ satisfying
    $\AbsValueOf{\TheEigenvalue}>\MetricParameter\TheRadius$ is an
    eigenvalue of finite multiplicity and the corresponding eigenspace
    $\TheEigenspace[\TheEigenvalue]$ is contained in the subspace
    $\Depends[1]$. It is isomorphic to the eigenspace
    $\Equalizer{\TurnSum}{\TheEigenvalue\Identity}$ and for
    $\TheEigenvalue\neq\pm 1$ also isomorphic to the equalizer
    \(
      \Equalizer{\NeighborSum}{\Rescale[\TheEigenvalue]}
    \),
    which can also be described as
    \(
      \Equalizer{\NeighborAvg}{\LocalTweak[\TheTwist]}
    \).
  \end{theorem}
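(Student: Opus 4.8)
The plan is to read Theorem~\ref{main} off from the facts already established, the only point needing a genuinely new argument being the characterisation of equality in $\TheRadius\le\MaxModDegree$.

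\emph{Boundedness and the spectral radius.} That $\Tf[\MetricParameter]$ is bounded on $\LipMaps[\MetricParameter]$ with operator norm at most $\MaxModDegree$ is the corollary following Lemma~\ref{rescaling-bounds}, and Corollary~\ref{contraction} identifies $\TheRadius=\lim_{\NumIterations\to\infty}\PileHight[\NumIterations]^{1/\NumIterations}$. The quantity $\PileHight[\NumIterations]$ is the largest number of non-backtracking paths $\TheEdge[-\NumIterations]\turn\cdots\turn\TheEdge[0]$ of length $\NumIterations$ terminating in a fixed oriented edge; this makes no reference to $\MetricParameter$, so $\TheRadius$ is independent of $\MetricParameter$. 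Since $\ConstOne\in\Depends[1]$, where $\Tf$ acts as $\TurnSum$, the identity $\PileHight[\NumIterations]=\NormOf[\infty]{\Tf^{\NumIterations}\ConstOne}$ recorded in the spectral-radius section reads $\PileHight[\NnumIterations]=\NormOf[\infty]{\TurnSum^{\NumIterations}\ConstOne}$; as $\ConstOne$ is strictly positive and $\TurnSum$ is a nonnegative operator on the finite-dimensional space $\MapsOn{\EdgeSet}$, this shows $\TheRadius$ is precisely the spectral radius of $\TurnSum$ on $\MapsOn{\EdgeSet}$. Since the row of $\TurnSum$ indexed by $\TheEdge$ has sum $\ModDegreeOf{\InitialOf{\TheEdge}}\le\MaxModDegree$ and a nonnegative matrix has spectral radius at most its largest row sum, $\TheRadius\le\MaxModDegree$; and because every oriented edge admits at least one non-backtracking predecessor (no terminal vertices), $\PileHight[\NumIterations]\ge1$, so $\TheRadius\ge1$ and in particular $\TheRadius\ne0$.

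\emph{Equality holds exactly for regular graphs.} If $\TheGraph$ is regular then every row sum of $\TurnSum$ equals $\MaxModDegree$, so $\TurnSum\ConstOne=\MaxModDegree\ConstOne$ and $\TheRadius=\MaxModDegree$. Conversely, suppose $\TheRadius=\MaxModDegree$. If $\TheGraph$ is a single cycle it is already regular; otherwise, $\TheGraph$ being connected, without terminal vertices, and not a cycle, the turn relation is strongly connected on the set of oriented edges, so $\TurnSum$ is irreducible. For an irreducible nonnegative matrix the spectral radius equals the largest row sum only if all row sums coincide, whence $\ModDegreeOf{\InitialOf{\TheEdge}}=\MaxModDegree$ for every edge $\TheEdge$, i.e.\ every vertex has degree $\MaxModDegree+1$ and $\TheGraph$ is regular. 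The substance here is the strong connectedness of the turn relation for connected non-cycle graphs without terminal vertices, which I would establish by an elementary path-surgery argument, together with the quoted Perron--Frobenius fact; I expect this to be the main obstacle, everything else being assembly.

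\emph{Eigenvalues beyond $\MetricParameter\TheRadius$.} Proposition~\ref{essential-spectrum} bounds the essential spectral radius of $\Tf[\MetricParameter]$ by $\MetricParameter\TheRadius$; hence, by the quasi-compactness picture spelled out in its proof, any spectral value $\TheEigenvalue$ with $\AbsValueOf{\TheEigenvalue}>\MetricParameter\TheRadius$ is an isolated eigenvalue of finite multiplicity. Proposition~\ref{approximation-by-subspaces} places the eigenspace $\TheEigenspace[\TheEigenvalue]=\Equalizer{\Tf}{\TheEigenvalue\Identity}$ inside $\Depends[1]$; as $\TheGraph$ is finite, $\Depends[1]$ is all of $\MapsOn{\EdgeSet}$ and $\Tf$ restricted to it is $\TurnSum$, so in fact $\TheEigenspace[\TheEigenvalue]=\Equalizer{\TurnSum}{\TheEigenvalue\Identity}$, which is in particular isomorphic to it. Finally $\TheEigenvalue\ne0$ since $\MetricParameter\TheRadius>0$, so if moreover $\TheEigenvalue\ne\pm1$, Proposition~\ref{algebraic} furnishes the isomorphism $\Gradient[\TheEigenvalue]\colon\Equalizer{\NeighborSum}{\Rescale[\TheEigenvalue]}\to\Equalizer{\TurnSum}{\TheEigenvalue\Identity}$, and $\Equalizer{\NeighborSum}{\Rescale[\TheEigenvalue]}=\Equalizer{\NeighborAvg}{\LocalTweak[\TheEigenvalue]}$ by the observation preceding Proposition~\ref{translated-alg-obs}; composing these identifications yields the remaining assertions.
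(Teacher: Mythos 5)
Your proposal is correct, and for everything except the equality characterization it follows the paper's own assembly: boundedness from Lemma~\ref{rescaling-bounds}, the formula $\TheRadius=\lim_{\NumIterations}\PileHight[\NumIterations]^{1/\NumIterations}$ from Corollary~\ref{contraction} (whose manifest independence of $\MetricParameter$ is the point), Proposition~\ref{essential-spectrum} for finite multiplicity, Proposition~\ref{approximation-by-subspaces} to place the eigenspace in $\Depends[1]$ where $\Tf$ acts as $\TurnSum$, and Proposition~\ref{algebraic} for the final isomorphisms.

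Where you genuinely diverge is the claim that $\TheRadius=\MaxModDegree$ exactly for regular graphs. The paper argues combinatorially: for regular graphs $\PileHightOf[\NumIterations]{\TheEdge}=\MaxModDegree^{\NumIterations}$ exactly; for non-regular graphs it asserts $\PileHight[\MoreIterations]<\MaxModDegree^{\MoreIterations}$ for large $\MoreIterations$ and then uses the submultiplicativity $\PileHight[\MoreIterations+\NumIterations]\leq\PileHight[\MoreIterations]\PileHight[\NumIterations]$ to push this one strict inequality down to $\TheRadius\leq\PileHight[\MoreIterations]^{1/\MoreIterations}<\MaxModDegree$. You instead first identify $\TheRadius$ with the spectral radius of the finite matrix $\TurnSum$ (via $\PileHight[\NumIterations]=\NormOf[\infty]{\TurnSum^{\NumIterations}\ConstOne}=$ the $\ell^\infty$-operator norm of $\TurnSum^{\NumIterations}$, then Gelfand) and invoke Perron--Frobenius: an irreducible nonnegative matrix attains its maximal row sum as spectral radius only when all row sums agree. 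Both routes leave one combinatorial fact to the reader of comparable weight: the paper's unproved assertion that $\PileHight[\MoreIterations]<\MaxModDegree^{\MoreIterations}$ eventually (which amounts to every oriented edge seeing a vertex of non-maximal degree along some backward non-backtracking path), and your flagged claim that the turn relation is strongly connected for connected non-cycle graphs without terminal vertices. The latter is a true and standard fact from the theory of non-backtracking walks and Ihara zeta functions, and your treatment of the cycle as a separate (trivially regular) case is exactly the right carve-out, but you should indeed supply the path-surgery argument, since irreducibility fails for cycles and is the entire load-bearing hypothesis of the Perron--Frobenius step. A side benefit of your route is that it makes explicit that $\TheRadius$ equals the spectral radius of $\TurnSum$ on $\MapsOn{\EdgeSet}$, which the paper only obtains implicitly through the eigenvalue correspondence.
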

  \begin{proof}
    By Lemma~\ref{rescaling-bounds}, the operator $\Tf[\MetricParameter]$ is
    bounded.  Its spectral radius $\TheRadius$ has been determined in
    Corollary~\ref{contraction} in terms of the numbers
    $\PileHightOf[\NumIterations]{\TheEdge}$ of edge paths of length
    $\NumIterations$ that end with the edge $\TheEdge$. Recall that
    \[
      \TheRadius
      =
      \Lim[\NumIterations\rightarrow\infty]{
        \PileHight[\NumIterations]^{1/\NumIterations}
      }
    \]
    where
    \(
      \PileHight[\NumIterations]
      =
      \Max{
        \SetOf[{\PileHightOf[\NumIterations]{\TheEdge}}]{
          \TheEdge\in\EdgeSet
        }
      }
    \).
    For a regular graph, $\MaxModDegree=\ModDegreeOf{\TheVertex}$ for
    any vertex $\TheVertex$, whence we find
    $\PileHightOf[\NumIterations]{\TheEdge}=\MaxModDegree^\NumIterations$
    for any edge. Hence, $\TheRadius=\MaxModDegree$.
    
    For a non-regular graph and $\MoreIterations$ large enough, one
    has $\PileHight[\MoreIterations]<\MaxModDegree^{\MoreIterations}$. To
    see that the spectral radius is strictly less than $\MaxModDegree$, it
    just remains to observe
    \(
      \PileHightOf[\MoreIterations+\NumIterations]{\TheEdge}
      \leq
      \PileHight[\MoreIterations]
      \PileHightOf[\NumIterations]{\TheEdge}
    \)
    whence
    \(
      \PileHight[\MoreIterations+\NumIterations] \leq
      \PileHight[\MoreIterations] \PileHight[\NumIterations]
    \).

    If $\TheEigenvalue$ lies in the spectrum of
    $\Tf[\MetricParameter]$ and satisfies
    $\AbsValueOf{\TheEigenvalue}>\MetricParameter\TheRadius$, then it lies
    in the essential spectrum by Proposition~\ref{essential-spectrum},
    i.e., it is an eigenvalue of finite multiplicity.
    
    Also for $\AbsValueOf{\TheEigenvalue}>\MetricParameter\TheRadius$,
    the eigenspace $\TheEigenspace[\TheEigenvalue]$ is a subspace of
    $\Depends[1]=\MapsOn{\EdgeSet}$ by
    Proposition~\ref{approximation-by-subspaces}. Note that
    $\TheEigenvalue\neq 0$ because of
    $\AbsValueOf{\TheEigenvalue}>\MetricParameter\TheRadius$. The transfer
    operator restricted to $\Depends[1]$ agrees with the turn sum operator
    $\TurnSum$ on $\MapsOn{\EdgeSet}$. Hence
    \(
      \TheEigenspace[\TheEigenvalue]
      =
      \Equalizer{\TurnSum}{\TheEigenvalue\Identity}
    \).
    If, in addition, $\TheEigenvalue\neq\pm 1$,
    Proposition~\ref{algebraic} applies and
    $\Equalizer{\TurnSum}{\TheEigenvalue\Identity}$ is isomorphic to the
    equalizer $\Equalizer{\NeighborSum}{\Rescale[\TheEigenvalue]}$.
  \end{proof}

  This result has a consequence for the space
  \(
    \Depends[\infty]
    :=
    \Depends[1]\union\Depends[2]\union\cdots
  \)
  of functions on $\PathSet$ that depend only on an initial segment.
  \begin{corollary}\label{transfer-on-loc-const}
    For $\TheTwist\neq 0$, the eigenspace
    $\Equalizer{\Tf|_{\Depends[\infty]}}{\TheTwist\Identity}$ of $\Tf$
    restricted to $\Depends[\infty]$ is contained in $\Depends[1]$ and is
    isomorphic to the eigenspace
    $\Equalizer{\TurnSum}{\TheTwist\Identity}$. For
    $\TheTwist\not\in\SetOf{-1,0,1}$, Proposition~\ref{algebraic}, yields
    an additional isomorphism to
    \(
      \Equalizer{\NeighborSum}{\Rescale[\TheTwist]}
    \),
    i.e., to
    \(
      \Equalizer{\NeighborAvg}{\LocalTweak[\TheTwist]}
    \).
  \end{corollary}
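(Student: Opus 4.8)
The plan is to reduce the statement to an application of Theorem~\ref{main} by choosing the metric parameter $\MetricParameter$ small enough that every nonzero $\TheTwist$ falls in the good range $\AbsValueOf{\TheTwist} > \MetricParameter\TheRadius$. The subtlety is that Theorem~\ref{main} is stated for a fixed $\MetricParameter$ and for eigenvalues bounded below by $\MetricParameter\TheRadius$, whereas here we want a statement about all $\TheTwist \neq 0$ simultaneously and about the operator on $\Depends[\infty]$ rather than on $\LipMaps[\MetricParameter]$. So the first step is to observe that $\Depends[\infty] \subseteq \LipMaps[\MetricParameter]$ for every $\MetricParameter \in (0,1)$ (each $\Depends[\TheLevel]$ sits inside $\LipMaps[\MetricParameter]$ by the Lemma preceding the norm definition, and $\Depends[\infty]$ is their union), and that $\Tf$ preserves $\Depends[\infty]$ since $\Tf$ maps $\Depends[\TheLevel]$ into $\Depends[\TheLevel-1] \subseteq \Depends[\TheLevel]$.

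Next I would fix the given $\TheTwist \neq 0$ and choose $\MetricParameter \in (0,1)$ with $\MetricParameter < \AbsValueOf{\TheTwist}/\MaxModDegree$; since $\TheRadius \leq \MaxModDegree$ by Corollary~\ref{contraction}, this guarantees $\AbsValueOf{\TheTwist} > \MetricParameter\TheRadius$. Now consider an element $\FnPath \in \Equalizer{\Tf|_{\Depends[\infty]}}{\TheTwist\Identity}$. Viewing $\FnPath$ as an element of $\LipMaps[\MetricParameter]$, it satisfies $\Tf\FnPath = \TheTwist\FnPath$ there as well, so $\FnPath \in \Equalizer{\Tf}{\TheTwist\Identity} \subseteq \LipMaps[\MetricParameter]$. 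By Proposition~\ref{approximation-by-subspaces} (whose hypothesis $\AbsValueOf{\TheTwist} > \MetricParameter\TheRadius$ is exactly what we arranged), $\FnPath \in \Depends[1]$. This shows $\Equalizer{\Tf|_{\Depends[\infty]}}{\TheTwist\Identity} \subseteq \Depends[1]$. Conversely, any eigenfunction in $\Depends[1]$ with eigenvalue $\TheTwist$ obviously lies in $\Depends[\infty]$, so in fact $\Equalizer{\Tf|_{\Depends[\infty]}}{\TheTwist\Identity} = \Equalizer{\Tf|_{\Depends[1]}}{\TheTwist\Identity} = \Equalizer{\TurnSum}{\TheTwist\Identity}$, using the already-recorded identity that $\Tf$ restricted to $\Depends[1] = \MapsOn{\EdgeSet}$ is the turn sum operator $\TurnSum$.

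Finally, for $\TheTwist \notin \SetOf{-1,0,1}$ I would simply invoke Proposition~\ref{algebraic}, which provides the isomorphism $\Gradient[\TheTwist]\colon \Equalizer{\NeighborSum}{\Rescale[\TheTwist]} \xrightarrow{\ \sim\ } \Equalizer{\TurnSum}{\TheTwist\Identity}$, together with the remark that $\Equalizer{\NeighborSum}{\Rescale[\TheTwist]} = \Equalizer{\NeighborAvg}{\LocalTweak[\TheTwist]}$. I do not expect any serious obstacle here: the only thing to be careful about is that the choice of $\MetricParameter$ depends on $\TheTwist$, which is harmless because the spaces $\Depends[\TheLevel]$ and the operators $\Tf$, $\TurnSum$, $\NeighborSum$ do not depend on $\MetricParameter$ at all — the metric is merely an auxiliary device used to deploy Proposition~\ref{approximation-by-subspaces}. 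The mild subtlety worth a sentence in the writeup is to note that $\Depends[\infty]$ is not itself one of the Banach spaces $\LipMaps[\MetricParameter]$ (it is not closed), so one really does argue by passing each individual eigenfunction into a suitable $\LipMaps[\MetricParameter]$ rather than by a direct functional-analytic statement about $\Depends[\infty]$.
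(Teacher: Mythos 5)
Your proposal is correct and follows essentially the same route as the paper: fix $\TheTwist\neq 0$, choose $\MetricParameter$ with $\MetricParameter\MaxModDegree<\AbsValueOf{\TheTwist}$, and use the fact that $\Tf|_{\Depends[\infty]}=\Tf[\MetricParameter]|_{\Depends[\infty]}$ to reduce to the Lipschitz setting. The only cosmetic difference is that the paper cites Theorem~\ref{main} wholesale whereas you unwind it to its ingredient Proposition~\ref{approximation-by-subspaces}; the substance is identical.
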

  \begin{proof}
    As above, we put
    \(
      \MaxModDegree :=
      \Max{
        \SetOf[\ModDegreeOf{\TheVertex}]{
          \TheVertex\in\VertexSet
        }
      }
    \).
    Since we assume $\TheTwist\neq 0$, we can choose
    $\MetricParameter\in(0,1)$ such that
    $\MetricParameter\MaxModDegree<\AbsValueOf{\TheTwist}$. Now,
    Theorem~\ref{main} applies as
    $\Tf|_{\Depends[\infty]}=\Tf[\MetricParameter]|_{\Depends[\infty]}$.
  \end{proof}

  \begin{remark}\label{rem:correspondence}
    These results should be compared to the quantum classical
    correspondence for geodesic flows: The scale of Lipschitz continuous
    functions on the edge paths are an analogue to the anisotropic Sobolev
    spaces on the sphere bundle of a negatively curved manifold. The
    quantum classical correspondence on manifolds of constant negative
    curvature \cite{DFG15, GHW18a} (or locally symmetric spaces
    \cite{KW21, GHW21, HHW21}) says that there is bijection between a
    subset of Ruelle resonances (so called first band resonances) of the
    geodesic flow and the eigenvalues of the Laplacian on the locally
    symmetric space. In our case of graphs we obtain the same
    relation to the eigenvalues of the Laplacian if the graph is regular,
    which can be interpreted as having constant curvature or being a
    locally symmetric space.
    In fact, while in Remark~\ref{rem:Ruelle resonance} the set of
    Ruelle resonances depends $\vartheta$, choosing $\vartheta$ small
    enough we do indeed get the entire Laplace spectrum via the
    correspondence.
    If we relax the assumption of regularity of
    the graph to the assumption of bounded degree, then we obtain not an
    eigenfunction of the Laplacian anymore but rather an element in the
    equalizer $\Equalizer{\NeighborAvg}{\LocalTweak[\TheTwist]}$. But as
    $\LocalTweak[\TheTwist]$ is a multiplication operator these are
    precisely eigenfunctions of the Laplacian plus a lower order remainder
    term.
    This in turn can directly be compared to the recent work
    of Faure and Tsujii \cite{FT21} where they establish some form of
    quantum classical correspondence for geodesic flows of variable
    curvature. In their work, the Laplace operator is also perturbed by
    (rather inexplicit) lower order terms.
  \end{remark}

  \section{Interpretation of the dual}
  We continue the discussion of a finite graph $\TheGraph$.  In this
  case, the space $\PathSet$ is compact and we can recognize
  $\Depends[\infty]$ as the space
  \(
    \LocConstOn{\PathSet}
  \)
  of locally constant functions on $\PathSet$. A function
  $\FnPath\mapcolon\PathSet\rightarrow\ComplexNumbers$ is locally
  constant if there is a cover of $\PathSet$ by districts on each of
  which $\FnPath$ is constant.  As $\PathSet$ is compact, a finite cover
  by such districts exists and the function is contained in
  $\Depends[\TheLevel]$ for any level exceeding the deepest level used
  in the cover.

  As $\Depends[\infty]=\LocConstOn{\PathSet}$ is contained in
  $\LipMaps[\MetricParameter]$ for arbitrary small $\MetricParameter$,
  we denote the restriction of the transfer operator $\Tf$ to
  $\LocConstOn{\PathSet}$ by $\AlgTf$.  We shall give a combinatorial
  description for the algebraic dual of $\LocConstOn{\PathSet}$ and
  describe the dual transfer operator $\DualAlgTf$ defined on the
  algebraic dual.

  Let $\PrefixSet[\TheLevel]$ be the set of $\TheLevel$-digit postal
  codes, or geometrically the set of edge paths of length $\TheLevel$
  without backtracking. As $\TheGraph$ is finite, these are finite
  sets. We consider the disjoint union, i.e. the set
  \[
    \PrefixSet := \PrefixSet[1] \union \PrefixSet[2] \union \cdots
  \]
  of all postal codes (or finite length edge paths without
  backtracking). A postal code $\TheCode$ designates a district. Each
  one-digit-extension $\TheCode\turn\TheEdge$ designates an immediate
  sub-district. We define the space
  \[
    \FinAddMeasuresOn{\PathSet}
    :=
    \SetOf[
    \TheMeasure \mapcolon \PrefixSet \rightarrow \ComplexNumbers
    ]{
      \TheMeasureOf{\TheCode}
      =
      \Sum[\TheCode\turn\TheEdge]{
        \TheMeasureOf{\TheCode\turn\TheEdge}
        \text{\ for all\ }\TheCode\in\PrefixSet
      }
    }
  \]
  of all complex-valued function on $\PrefixSet$ that are additive
  with respect to summing over all one-digit-extensions. Such a function
  defines a complex valued finitely additive measure on the clopen sets of the totally
  disconnected compact space $\PathSet$. 

  Given $\TheMeasure \mapcolon\PrefixSet\rightarrow\ComplexNumbers$ in
  $\FinAddMeasuresOn{\PathSet}$ and a function
  $\FnPath\in\Depends[\TheLength]$ on $\PathSet$ that is constant
  on level-$\TheLength$ districts, we define the pairing
  \begin{equation}\label{pairing}
    \PairingOf{\FnPath}{\TheMeasure}
    :=
    \Int[\ThePath\in\PathSet]{
      \FnPathOf{\ThePath}
      \diff\TheMeasureOf{\ThePath}
    }
    :=
    \Sum[{\TheCode\in{\PrefixSet[\TheLength]}}]{
      \FnPathOf{\TheCode}
      \TheMeasureOf{\TheCode}
    }
  \end{equation}
  where $\FnPathOf{\TheCode}$ is the value of $\FnPath$ across the
  district designated by $\TheCode$. As $\TheGraph$ is finite, this is a
  finite sum. The pairing is well defined: regarding $\FnPath$ as an
  element of $\Depends[\TheLength+1]$ does not affect the value
  $\PairingOf{\FnPath}{\TheMeasure}$ by the additivity of $\TheMeasure$.

  \begin{proposition}
    Assume that $\TheGraph$ is finite.  The map
    \(
      \TheMeasure\mapsto\PairingOf{\DummyArg}{\TheMeasure}
    \)
    identifies $\FinAddMeasuresOn{\PathSet}$ with the algebraic dual of
    $\LocConstOn{\PathSet}=\Depends[\infty]$.
  \end{proposition}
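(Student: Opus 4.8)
The plan is to identify the pairing $\PairingOf{\DummyArg}{\TheMeasure}$ with the canonical realisation of $\FinAddMeasuresOn{\PathSet}$ as the inverse limit of the finite-dimensional dual spaces $\Depends[\TheLevel]^{*}$ along the maps dual to the inclusions $\Depends[1]\subspace\Depends[2]\subspace\cdots$, whose union is $\LocConstOn{\PathSet}=\Depends[\infty]$.

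First I would describe the situation one level at a time. Since $\TheGraph$ is finite and has no terminal vertices, for each $\TheLevel$ the districts $\PathSet[\TheCode]$ with $\TheCode$ ranging over the finite set $\PrefixSet[\TheLevel]$ of $\TheLevel$-digit postal codes are nonempty and partition $\PathSet$; hence their indicator functions $\ConstOne_{\PathSet[\TheCode]}$ form a basis of $\Depends[\TheLevel]$, and evaluation on this basis identifies the dual $\Depends[\TheLevel]^{*}$ with $\MapsOn{\PrefixSet[\TheLevel]}$. The identity $\ConstOne_{\PathSet[\TheCode]}=\Sum[\TheCode\turn\TheEdge]{\ConstOne_{\PathSet[\TheCode\turn\TheEdge]}}$, valid in $\Depends[\TheLevel+1]$, then shows that the dual of the inclusion $\Depends[\TheLevel]\subspace\Depends[\TheLevel+1]$ is precisely the ``sum over one-digit extensions'' map $\Depends[\TheLevel+1]^{*}\rightarrow\Depends[\TheLevel]^{*}$. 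Consequently a compatible family of functionals along the directed system is exactly a function $\TheMeasure\mapcolon\PrefixSet\rightarrow\ComplexNumbers$ with $\TheMeasureOf{\TheCode}=\Sum[\TheCode\turn\TheEdge]{\TheMeasureOf{\TheCode\turn\TheEdge}}$, i.e., an element of $\FinAddMeasuresOn{\PathSet}$, and formula~\eqref{pairing} is just the evaluation of the associated functional on $\Depends[\TheLevel]$.

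Having set this up, I would verify the bijection directly, which is short. Linearity of $\TheMeasure\mapsto\PairingOf{\DummyArg}{\TheMeasure}$ is immediate from~\eqref{pairing}, and well-definedness of the pairing has already been observed. For injectivity, suppose $\PairingOf{\FnPath}{\TheMeasure}=0$ for every $\FnPath\in\Depends[\infty]$; taking $\FnPath=\ConstOne_{\PathSet[\TheCode]}\in\Depends[\TheLevel]$ for an arbitrary $\TheCode\in\PrefixSet[\TheLevel]$ yields $\TheMeasureOf{\TheCode}=\PairingOf{\ConstOne_{\PathSet[\TheCode]}}{\TheMeasure}=0$ straight from~\eqref{pairing}, so $\TheMeasure=0$. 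For surjectivity, let $\Phi$ be a linear functional on $\Depends[\infty]$ and put $\TheMeasureOf{\TheCode}:=\Phi(\ConstOne_{\PathSet[\TheCode]})$ for $\TheCode\in\PrefixSet$. Applying $\Phi$ to the identity $\ConstOne_{\PathSet[\TheCode]}=\Sum[\TheCode\turn\TheEdge]{\ConstOne_{\PathSet[\TheCode\turn\TheEdge]}}$ --- a finite sum, as $\TheGraph$ is locally finite --- shows that $\TheMeasure$ is additive, hence $\TheMeasure\in\FinAddMeasuresOn{\PathSet}$; and since $\PairingOf{\DummyArg}{\TheMeasure}$ and $\Phi$ are linear functionals that agree on the indicators $\ConstOne_{\PathSet[\TheCode]}$, $\TheCode\in\PrefixSet[\TheLevel]$, which span $\Depends[\TheLevel]$, they agree on all of $\Depends[\infty]=\bigcup_{\TheLevel}\Depends[\TheLevel]$.

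The only thing requiring care --- not a genuine obstacle --- is that nothing should depend on the level at which a given function is represented or a given postal code is tested; this is exactly what the additivity condition defining $\FinAddMeasuresOn{\PathSet}$, and the additivity step in the surjectivity argument, take care of, and it is why no topology or completeness of $\Depends[\infty]$ is needed: the whole statement is linear algebra over the directed system $\Depends[1]\subspace\Depends[2]\subspace\cdots$.
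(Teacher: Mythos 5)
Your proof is correct and complete. For comparison: the paper does not actually spell out this argument at all; its proof is a single sentence deferring to the tree case treated in \cite[Section~3]{BHW22}, with the remark that the forest case ``does not differ substantially.'' What you have done is supply a self-contained proof via the standard duality of filtered colimits: $\Depends[\infty]=\bigcup_{\TheLevel}\Depends[\TheLevel]$ is an increasing union of finite-dimensional spaces, its algebraic dual is the inverse limit of the duals $\Depends[\TheLevel]^{*}$, and the compatibility condition $\TheMeasureOf{\TheCode}=\Sum[\TheCode\turn\TheEdge]{\TheMeasureOf{\TheCode\turn\TheEdge}}$ is exactly the defining condition of $\FinAddMeasuresOn{\PathSet}$ once one identifies $\Depends[\TheLevel]^{*}$ with functions on $\PrefixSet[\TheLevel]$ via the indicator basis. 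The direct verification of injectivity and surjectivity you then give is the right way to make this precise, and the two places where the hypotheses enter are correctly flagged: absence of terminal vertices guarantees the districts are nonempty (so the indicators are linearly independent), and finiteness of $\TheGraph$ guarantees each $\PrefixSet[\TheLevel]$ is finite (so the indicators span $\Depends[\TheLevel]$ and the relevant sums are finite). This is almost certainly what the cited argument does in the one-island case, so your route is not genuinely different; it merely fills in the details that the paper elects to outsource.
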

  
  \begin{proof}
    We have discussed the case where $\PathSet$ is the space of ends
    of a tree in \cite[Section~3]{BHW22}.  The present case, where
    $\PathSet$ is the space of ends of a forest with finitely many roots,
    does not differ substantially.
  \end{proof}

  Let $\CharFct[\TheCode]$ be the characteristic function of the
  district designated by $\TheCode$, i.e.,
  \(
    \CharFctOf[\TheCode]{\ThePath} = 1
  \)
  if $\TheCode$ is an initial prefix of $\ThePath$, and
  \(
    \CharFctOf[\TheCode]{\ThePath} = 0
  \)
  otherwise. We can compute the effect of the transfer operator:
  \begin{equation}\label{tf-on-char-fct}
    \AlgTfOf{\CharFct[\TheEdge\turn\TheCode]} = \CharFct[\TheCode]
  \end{equation}
  To see this, we evaluate on a path:
  \[
    (\AlgTf\CharFct[\TheEdge\turn\TheCode])(\ThePath) =
    \Sum[\AltEdge\turn\ThePath]{
      \CharFctOf[\TheEdge\turn\TheCode]{\AltEdge\turn\ThePath}
    }
    =
    \CharFctOf[\TheCode]{\ThePath}
  \]
  As a consequence, we can also compute the effect of the dual transfer
  operator $\DualAlgTf$ on finitely additive measures:
  \begin{equation}\label{dual-tf-on-measure}
    (\DualAlgTf\TheMeasure)(\TheEdge\turn\TheCode)
    = \PairingOf{
      \CharFct[\TheEdge\turn\TheCode]
    }{
      \DualAlgTfOf{\TheMeasure}
    }
    =
    \PairingOf{
      \AlgTfOf{
        \CharFct[\TheEdge\turn\TheCode]
      }
    }{
      \TheMeasure
    }
    = \PairingOf{\CharFct[\TheCode]}{\TheMeasure}
    =
    \TheMeasureOf{\TheCode}
  \end{equation}

  Furthermore, formula~\pref{dual-tf-on-measure} interacts very nicely with the
  eigenspace identity $\DualAlgTfOf{\TheMeasure}=\TheTwist\TheMeasure$.
  
  \begin{observation}\label{obs:measure-eigenspace}
    A measure $\TheMeasure\in\FinAddMeasuresOn{\PathSet}$ belongs to the
    eigenspace $\Equalizer{\DualAlgTf}{\TheTwist\Identity}$ if and only
    if
    \(
      \TheMeasureOf{\TheCode}
      =
      (\DualAlgTf\TheMeasure)(\TheEdge\turn\TheCode)
      =
      \TheTwist\TheMeasureOf{\TheEdge\turn\TheCode}
    \)
    for all postal codes $\TheEdge\turn\TheCode$.

    In particular, only the everywhere vanishing measure satisfies
    this condition for $\TheTwist=0$, i.e., the dual transfer operator
    $\DualAlgTf$ is injective.
  \end{observation}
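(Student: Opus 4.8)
The plan is to derive the whole statement from formula~\pref{dual-tf-on-measure} together with the finite additivity built into $\FinAddMeasuresOn{\PathSet}$. Since a finitely additive measure is by definition a function on the set $\PrefixSet$ of postal codes, the eigenvalue identity $\DualAlgTf\TheMeasure=\TheTwist\TheMeasure$ is equivalent to the family of scalar equations $(\DualAlgTf\TheMeasure)(\TheCode')=\TheTwist\TheMeasureOf{\TheCode'}$, one for each $\TheCode'\in\PrefixSet$. I would split this family by the length of $\TheCode'$. Every postal code of length at least $2$ has the form $\TheEdge\turn\TheCode$ with $\TheEdge$ its first digit, and for such codes formula~\pref{dual-tf-on-measure} gives $(\DualAlgTf\TheMeasure)(\TheEdge\turn\TheCode)=\TheMeasureOf{\TheCode}$; thus the associated equation is precisely the displayed chain $\TheMeasureOf{\TheCode}=(\DualAlgTf\TheMeasure)(\TheEdge\turn\TheCode)=\TheTwist\TheMeasureOf{\TheEdge\turn\TheCode}$. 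This settles the ``only if'' direction and shows that the displayed condition is necessary.

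For the converse the one point that needs attention is that the displayed condition only constrains postal codes of length $\geq 2$, while $\DualAlgTf\TheMeasure=\TheTwist\TheMeasure$ must also hold on single edges. To close this gap I would first record the length-one analogue of~\pref{tf-on-char-fct}, namely $\AlgTfOf{\CharFct[\TheEdge]}=\Sum[\TheEdge\turn\AltEdge]{\CharFct[\AltEdge]}$ (evaluate on a path: prepending $\AltEdge$ lands in the district $\PathSet[\TheEdge]$ exactly when $\AltEdge=\TheEdge$), which dualizes via the pairing to $(\DualAlgTf\TheMeasure)(\TheEdge)=\Sum[\TheEdge\turn\AltEdge]{\TheMeasureOf{\AltEdge}}$. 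Now, if $\TheTwist\neq 0$, rewrite the displayed condition as $\TheMeasureOf{\TheEdge\turn\AltEdge}=\TheTwist^{-1}\TheMeasureOf{\AltEdge}$ and substitute it into the additivity relation $\TheMeasureOf{\TheEdge}=\Sum[\TheEdge\turn\AltEdge]{\TheMeasureOf{\TheEdge\turn\AltEdge}}$; this yields $\TheMeasureOf{\TheEdge}=\TheTwist^{-1}(\DualAlgTf\TheMeasure)(\TheEdge)$, i.e.\ the missing equation $(\DualAlgTf\TheMeasure)(\TheEdge)=\TheTwist\TheMeasureOf{\TheEdge}$. If $\TheTwist=0$, the displayed condition forces $\TheMeasureOf{\TheCode}=0$ for every postal code $\TheCode$ that is the tail of a longer one; since $\TheGraph$ has no terminal vertices every postal code admits a one-edge prefix, so $\TheMeasure$ vanishes on all of $\PrefixSet$ and hence $\TheMeasure=0$. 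This last remark is exactly the ``in particular'' clause, giving $\Ker\DualAlgTf=0$.

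None of these steps is difficult; the only real obstacle is bookkeeping --- not overlooking the length-one postal codes that~\pref{dual-tf-on-measure} does not reach, and noting that it is the standing assumption of no terminal vertices that makes the $\TheTwist=0$ case (and thus the injectivity of $\DualAlgTf$) go through. Once the one-line computation $\AlgTfOf{\CharFct[\TheEdge]}=\Sum[\TheEdge\turn\AltEdge]{\CharFct[\AltEdge]}$ is in hand, the statement is an immediate consequence of~\pref{dual-tf-on-measure} and finite additivity.
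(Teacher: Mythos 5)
Your proposal is correct and takes precisely the route the paper intends: the statement is presented as an ``Observation'' following immediately from formula~\pref{dual-tf-on-measure}, and that is exactly how you derive it. The one point you add --- verifying the eigenvalue equation on length-one postal codes via $\AlgTfOf{\CharFct[\TheEdge]}=\Sum[\TheEdge\turn\AltEdge]{\CharFct[\AltEdge]}$ together with finite additivity, and invoking the standing degree-at-least-two assumption so that every postal code admits a one-edge prefix extension in the $\TheTwist=0$ case --- is exactly the bookkeeping the paper leaves tacit, and you handle it correctly.
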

  For $\TheTwist\neq 0$, we deduce immediately that for
  $\TheMeasure\in\Equalizer{\DualAlgTf}{\TheTwist\Identity}$, the value of
  $\TheMeasure$ on any path only depends on its length and its final
  edge:
  \begin{equation}\label{action-at-a-distance}
    \TheMeasureOf{\TheEdge[1]\turn\cdots\turn\TheEdge[\TheLength]\turn\AltEdge}
    =
    \TheTwist^{-n}\TheMeasureOf{\AltEdge}
  \end{equation}
  In particular, such a $\TheMeasure$ is completely determined by its values
  on $\PrefixSet[1]$.

  \begin{theorem}\label{dual-main}
    For a finite graph $\TheGraph$ and $\TheTwist\neq 0$, the
    eigenspace $\Equalizer{\DualAlgTf}{\TheTwist\Identity}$ in
    $\FinAddMeasuresOn{\PathSet}$ is isomorphic to the eigenspace
    $\Equalizer{\TurnSum}{\TheTwist\Identity}$ in
    $\MapsOn{\EdgeSet}$.

    More precisely, if a measure $\TheMeasure$
    lies in
    $\Equalizer{\DualAlgTf}{\TheTwist\Identity}$, then the function
    \(
      \FnEdge\mapcolon\TheEdge \mapsto \TheMeasureOf{\OppositeOf{\TheEdge}}
    \)
    lies in $\Equalizer{\TurnSum}{\TheTwist\Identity}$. Conversely, if
    $\FnEdge\in\Equalizer{\TurnSum}{\TheTwist\Identity}$, the assignment
    \[
      \TheMeasureOf{\TheEdge[1]\turn\cdots\turn\TheEdge[\TheLength]}
      :=
      \TheTwist^{1-\TheLength}\FnEdgeOf{\OppositeOf{\TheEdge[\TheLength]}}
    \]
    defines a measure in $\Equalizer{\DualAlgTf}{\TheTwist\Identity}$.
  \end{theorem}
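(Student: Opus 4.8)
The plan is to exhibit explicit $\ComplexNumbers$-linear maps in both directions and verify that they are mutually inverse; the asserted isomorphism of eigenspaces then follows. The only substantive inputs are Observation~\ref{obs:measure-eigenspace}, which characterizes $\Equalizer{\DualAlgTf}{\TheTwist\Identity}$ inside $\FinAddMeasuresOn{\PathSet}$ by the local condition $\TheMeasureOf{\TheCode}=\TheTwist\TheMeasureOf{\TheEdge\turn\TheCode}$ for all postal codes $\TheEdge\turn\TheCode$; formula~\eqref{action-at-a-distance}, which (using $\TheTwist\neq0$) shows that such a $\TheMeasure$ is determined by its values on one-digit postal codes; and the trivial observation that edge reversal $\AltEdge\mapsto\OppositeOf{\AltEdge}$ is an involution of $\EdgeSet$ carrying the edges $\AltEdge$ with $\InitialOf{\AltEdge}=\InitialOf{\TheEdge}$ and $\AltEdge\neq\TheEdge$ bijectively onto the edges $\AltEdge$ with $\AltEdge\turn\TheEdge$.

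First I would treat the map $\TheMeasure\mapsto\FnEdge$ given by $\FnEdgeOf{\TheEdge}:=\TheMeasureOf{\OppositeOf{\TheEdge}}$. For $\TheMeasure\in\Equalizer{\DualAlgTf}{\TheTwist\Identity}$, apply the additivity defining $\FinAddMeasuresOn{\PathSet}$ to the one-digit postal code $\OppositeOf{\TheEdge}$: its one-digit extensions are the paths $\OppositeOf{\TheEdge}\turn\AltEdge$ with $\InitialOf{\AltEdge}=\InitialOf{\TheEdge}$ and $\AltEdge\neq\TheEdge$, and by~\eqref{action-at-a-distance} each $\TheMeasureOf{\OppositeOf{\TheEdge}\turn\AltEdge}$ equals $\TheTwist^{-1}\TheMeasureOf{\AltEdge}=\TheTwist^{-1}\FnEdgeOf{\OppositeOf{\AltEdge}}$. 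Reindexing this sum via the edge-reversal bijection turns it into $\TheTwist^{-1}(\TurnSum\FnEdge)(\TheEdge)$, so $\FnEdgeOf{\TheEdge}=\TheMeasureOf{\OppositeOf{\TheEdge}}$ yields $(\TurnSum\FnEdge)(\TheEdge)=\TheTwist\FnEdgeOf{\TheEdge}$; hence $\FnEdge\in\Equalizer{\TurnSum}{\TheTwist\Identity}$.

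Next I would treat the converse map: given $\FnEdge\in\Equalizer{\TurnSum}{\TheTwist\Identity}$, set $\TheMeasureOf{\TheEdge[1]\turn\cdots\turn\TheEdge[\TheLength]}:=\TheTwist^{1-\TheLength}\FnEdgeOf{\OppositeOf{\TheEdge[\TheLength]}}$ and check two things. For finite additivity, summing $\TheMeasure$ over the one-digit extensions of $\TheEdge[1]\turn\cdots\turn\TheEdge[\TheLength]$ gives, straight from the definition, $\TheTwist^{-\TheLength}$ times a sum of values $\FnEdgeOf{\OppositeOf{\AltEdge}}$ over the extending edges $\AltEdge$; by the edge-reversal bijection applied at $\OppositeOf{\TheEdge[\TheLength]}$ these correspond exactly to the edges turning into $\OppositeOf{\TheEdge[\TheLength]}$, so the sum equals $\TheTwist^{-\TheLength}(\TurnSum\FnEdge)(\OppositeOf{\TheEdge[\TheLength]})=\TheTwist^{1-\TheLength}\FnEdgeOf{\OppositeOf{\TheEdge[\TheLength]}}=\TheMeasureOf{\TheEdge[1]\turn\cdots\turn\TheEdge[\TheLength]}$. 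For membership in $\Equalizer{\DualAlgTf}{\TheTwist\Identity}$, note that prepending an edge to a postal code of length $\TheLength$ leaves the final edge unchanged but replaces the exponent $1-\TheLength$ by $-\TheLength$, so $\TheMeasureOf{\TheEdge\turn\TheCode}=\TheTwist^{-1}\TheMeasureOf{\TheCode}$, which is exactly the criterion of Observation~\ref{obs:measure-eigenspace}.

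Finally I would check that the two maps compose to the identity. Forming $\TheMeasure$ from $\FnEdge$ and then reading off $\TheEdge\mapsto\TheMeasureOf{\OppositeOf{\TheEdge}}=\TheTwist^{0}\FnEdgeOf{\TheEdge}=\FnEdgeOf{\TheEdge}$ recovers $\FnEdge$; forming $\FnEdge$ from $\TheMeasure$ and then the new measure gives value $\TheTwist^{1-\TheLength}\FnEdgeOf{\OppositeOf{\TheEdge[\TheLength]}}=\TheTwist^{1-\TheLength}\TheMeasureOf{\TheEdge[\TheLength]}$ on $\TheEdge[1]\turn\cdots\turn\TheEdge[\TheLength]$, which agrees with $\TheMeasureOf{\TheEdge[1]\turn\cdots\turn\TheEdge[\TheLength]}$ by~\eqref{action-at-a-distance}. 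Since both assignments are visibly linear, this completes the proof. I do not anticipate a genuine obstacle; the one delicate point is the bookkeeping — keeping the orientations straight through each reversal and matching the powers of $\TheTwist$, in particular the off-by-one between the length $\TheLength$ of a postal code and its exponent $1-\TheLength$ — while checking that $\TheTwist\neq0$ enters exactly where~\eqref{action-at-a-distance} and the divisions by $\TheTwist$ require it.
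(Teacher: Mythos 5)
Your proposal is correct and follows essentially the same path as the paper's proof: both directions rest on the same two ingredients, finite additivity of $\TheMeasure$ applied to a one-step extension, the eigenspace criterion of Observation~\ref{obs:measure-eigenspace} together with formula~\eqref{action-at-a-distance}, combined with the edge-reversal bijection that turns a sum over one-digit extensions of $\OppositeOf{\TheEdge}$ into the defining sum of $\TurnSum$ at $\TheEdge$. The only difference is presentational: you verify additivity uniformly for all lengths and spell out the mutual-inverse check, whereas the paper first reads the forward computation "in reverse" to get additivity on one-digit codes and then reduces longer codes to that case, leaving the bijectivity implicit; both amount to the same bookkeeping.
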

  
  \begin{proof}
    First assume $\TheMeasure\in\Equalizer{\DualAlgTf}{\TheTwist\Identity}$.
    By finite additivity, we have
    \[
      \TheTwist\FnEdgeOf{\TheEdge}
      =
      \TheTwist\TheMeasureOf{\OppositeOf{\TheEdge}} =
      \Sum[\OppositeOf{\TheEdge}\turn\OppositeOf{\AltEdge}]{
        \TheTwist\TheMeasureOf{\OppositeOf{\TheEdge}\turn\OppositeOf{\AltEdge}}
      }
      =
      \Sum[\OppositeOf{\TheEdge}\turn\OppositeOf{\AltEdge}]{
        \TheMeasureOf{\OppositeOf{\AltEdge}}
      }
      =
      \Sum[\AltEdge\turn\TheEdge]{
        \FnEdgeOf{\AltEdge}
      }
      =
      (\TurnSum\FnEdge)(\TheEdge)
    \]
    which is $\FnEdge\in\Equalizer{\TurnSum}{\TheTwist\Identity}$.

    Now assume
    $\FnEdge\in\Equalizer{\TurnSum}{\TheTwist\Identity}$. Reading the
    above equation in reverse, we find that $\TheMeasure$
    satisfies additivity for single digit postal codes:
    \[
      \TheMeasureOf{\TheEdge}
      =
      \Sum[\TheEdge\turn\AltEdge]{
        \TheMeasureOf{\TheEdge\turn\AltEdge}
      }
    \]
    Additivity for longer postal codes follows immediately from the
    given rule:
    \[
      \TheMeasureOf{\TheEdge[1]\turn\cdots\turn\TheEdge[\TheLength]}
      =
      \TheTwist^{1-n}\TheMeasureOf{\TheEdge[\TheLength]}
      =
      \TheTwist^{1-n}
      \Sum[{
        \TheEdge[\TheLength]\turn\AltEdge
      }]{
        \TheMeasureOf{\TheEdge[\TheLength]\turn\AltEdge}
      }
      =
      \Sum[{
        \TheEdge[\TheLength]\turn\AltEdge
      }]{
        \TheMeasureOf{\TheEdge[1]\turn\cdots\turn\TheEdge[\TheLength]\turn\AltEdge}
      }
    \]
    This completes the proof.
  \end{proof}

  \begin{corollary}\label{canonical-transpose}
    A finitely additive measure $\TheMeasure$ lies in the eigenspace
    $\Equalizer{\DualAlgTf}{\TheTwist\Identity}$ if and only if the map
    \(
      (\TheEdge[1]\turn\TheEdge[2]\turn\cdots)
      \mapsto
      \TheMeasureOf{\OppositeOf{\TheEdge[1]}}
    \),
    lies in the eigenspace
    $\Equalizer{\AlgTf}{\TheTwist\Identity}$. This yields an explicit
    isomorphism between the $\TheTwist$-eigenspaces of $\DualAlgTf$ and
    $\AlgTf$.
  \end{corollary}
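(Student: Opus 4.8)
The plan is to read Corollary~\ref{canonical-transpose} off from Theorem~\ref{dual-main} and Corollary~\ref{transfer-on-loc-const}: the assignment appearing in the statement is exactly the composite of the two isomorphisms those results furnish. Throughout I take $\TheTwist\neq 0$, which is forced anyway since for $\TheTwist=0$ the space $\Equalizer{\DualAlgTf}{\TheTwist\Identity}$ is trivial by Observation~\ref{obs:measure-eigenspace}.

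First I would factor the map into two steps. For $\TheMeasure\in\FinAddMeasuresOn{\PathSet}$ let $\FnEdge_{\TheMeasure}\in\MapsOn{\EdgeSet}$ be the edge function $\TheEdge\mapsto\TheMeasureOf{\OppositeOf{\TheEdge}}$; then the map in the statement is $\Inclusion\FnEdge_{\TheMeasure}\in\Depends[1]$, where $\Inclusion\mapcolon\MapsOn{\EdgeSet}\to\MapsOn{\PathSet}$ is the inclusion identifying $\MapsOn{\EdgeSet}$ with $\Depends[1]$. Recall that, under this identification, $\AlgTf|_{\Depends[1]}$ is precisely the turn sum operator $\TurnSum$; hence $\Inclusion$ restricts to a linear isomorphism $\Equalizer{\TurnSum}{\TheTwist\Identity}\xrightarrow{\sim}\Equalizer{\AlgTf}{\TheTwist\Identity}\cap\Depends[1]$. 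Now Corollary~\ref{transfer-on-loc-const} (applicable because $\TheTwist\neq 0$) tells us that the whole eigenspace $\Equalizer{\AlgTf}{\TheTwist\Identity}$ already sits in $\Depends[1]$, so the intersection is superfluous and $\Inclusion$ gives an isomorphism $\Equalizer{\TurnSum}{\TheTwist\Identity}\xrightarrow{\sim}\Equalizer{\AlgTf}{\TheTwist\Identity}$.

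For the remaining factor I would invoke Theorem~\ref{dual-main}, which provides the isomorphism $\TheMeasure\mapsto\FnEdge_{\TheMeasure}$ from $\Equalizer{\DualAlgTf}{\TheTwist\Identity}$ onto $\Equalizer{\TurnSum}{\TheTwist\Identity}$, together with its explicit inverse $\FnEdge\mapsto\bigl(\TheEdge[1]\turn\cdots\turn\TheEdge[\TheLength]\mapsto\TheTwist^{1-\TheLength}\FnEdgeOf{\OppositeOf{\TheEdge[\TheLength]}}\bigr)$. Composing the two isomorphisms shows that $\TheMeasure\mapsto\Inclusion\FnEdge_{\TheMeasure}$ restricts to a linear isomorphism $\Equalizer{\DualAlgTf}{\TheTwist\Identity}\xrightarrow{\sim}\Equalizer{\AlgTf}{\TheTwist\Identity}$. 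In particular, if $\TheMeasure\in\Equalizer{\DualAlgTf}{\TheTwist\Identity}$ then $\Inclusion\FnEdge_{\TheMeasure}\in\Equalizer{\AlgTf}{\TheTwist\Identity}$, and conversely every $\phi\in\Equalizer{\AlgTf}{\TheTwist\Identity}$ is of this form for a unique such $\TheMeasure$, namely the measure recovered from the edge restriction of $\phi$ via the inverse formula of Theorem~\ref{dual-main}; this is exactly the claimed correspondence.

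There is no serious obstacle here beyond bookkeeping; the two points I would be careful about are the role of the parameter restriction $\TheTwist\neq 0$ (needed both for Theorem~\ref{dual-main} to be available and for Corollary~\ref{transfer-on-loc-const} to place $\Equalizer{\AlgTf}{\TheTwist\Identity}$ inside $\Depends[1]$) and the observation that injectivity of $\TheMeasure\mapsto\Inclusion\FnEdge_{\TheMeasure}$ on the dual eigenspace rests on the fact that a measure in $\Equalizer{\DualAlgTf}{\TheTwist\Identity}$ is determined by its values on single edges, i.e.\ on \eqref{action-at-a-distance}.
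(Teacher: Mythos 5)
Your proof is correct and is precisely the paper's (implicit) argument: the corollary is stated without proof because it is exactly the composition you describe, namely the isomorphism $\TheMeasure\mapsto\bigl[\TheEdge\mapsto\TheMeasureOf{\OppositeOf{\TheEdge}}\bigr]$ of Theorem~\ref{dual-main} followed by the identification of $\Equalizer{\TurnSum}{\TheTwist\Identity}$ with $\Equalizer{\AlgTf}{\TheTwist\Identity}\subspace\Depends[1]$ provided by Corollary~\ref{transfer-on-loc-const}. Your closing caution is also well placed: since the displayed map depends only on the values of $\TheMeasure$ on $\PrefixSet[1]$, the literal ``if'' direction for an \emph{arbitrary} finitely additive measure does not follow from this composition (a nonzero measure vanishing on $\PrefixSet[1]$ could be added to an eigenmeasure without changing the induced function) --- what the composition yields, and what the corollary's second sentence actually asserts, is the isomorphism between the two eigenspaces, which is exactly what you prove.
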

  We shall show that this isomorphism
  extends to the $\TheTwist$-eigenspaces of
  $\DualTf[\MetricParameter]$ and $\Tf[\MetricParameter]$ provided that
  $\MetricParameter$ is small enough.

  \begin{remark}
    Theorem~\ref{dual-main} is where we use a very special property of
    our setting: the language $\PrefixSet$ of postal codes has the
    interesting feature that flipping each letter in a word to its
    opposite yields the reverse of a word that is also in the language.
  \end{remark}
  \begin{remark}
    The turn sum operator $\TurnSum$ on a locally finite graph is
    injective.  Consider a function
    $\FnEdge\in\MapsOn{\EdgeSet}$ in the kernel of
    $\TurnSum$.  That $\TurnSumOf{\FnEdge}$ vanishes on all outgoing edges
    around a vertex yields a fully determined linear system for the values
    of $\FnEdge$ on the incoming edges, which has only the trivial
    solution. Thus, in view of Observation~\ref{obs:measure-eigenspace} the
    eigenspaces from Theorem~\ref{dual-main} are also isomorphic in the
    case $\TheTwist=0$.
  \end{remark}
  
  \begin{remark}
    The combinatorial descriptions of $\FinAddMeasuresOn{\PathSet}$
    and the eigenspaces $\Equalizer{\TurnSum}{\TheTwist\Identity}$ suggest
    that it might be possible to find a direct argument for
    Corollary~\ref{transfer-on-loc-const} without introducing the
    Lipschitz-type metrics depending on the parameter $\MetricParameter$.
  \end{remark}

  \begin{observation}\label{good-choice}
    We made a choice of preferred continuations
    $\Prefer{\TheCode}\in\PathSet[\TheCode]$ for all finite edge paths
    $\TheCode$. A particular way of doing that is to pick a preferred
    outgoing turn for each edge and continue paths along such preferred
    turns. This choice has the consequence, that
    \[
      \Prefer{(\TheEdge\turn\TheCode)} = \TheEdge\turn\Prefer{\TheCode}
    \]
    for any path $\TheEdge\turn\TheCode$. As a consequence for the
    projection operators $\TheProjection[\TheLevel]$ we find a
    particularly nice compatibility with the transfer operator, namely:
    \[
      \TheProjection[\TheLevel]\AlgTf = \AlgTf\TheProjection[\TheLevel+1].
    \]
  \end{observation}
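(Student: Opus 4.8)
The plan is to make the preferred continuations explicit, read off the prepending identity $\Prefer{(\TheEdge\turn\TheCode)}=\TheEdge\turn\Prefer{\TheCode}$ directly from the construction, and then deduce the operator identity by evaluating both sides on an arbitrary path. For the construction I would fix, for each oriented edge $\TheEdge$, a preferred outgoing turn, i.e.\ one choice of edge $\AltEdge$ with $\TheEdge\turn\AltEdge$; such a choice exists because every vertex has degree at least~$2$, so an incoming edge can always be continued without backtracking. For a finite non-backtracking path $\TheCode=\TheEdge[1]\turn\cdots\turn\TheEdge[\TheLevel]$ I then let $\Prefer{\TheCode}$ be the infinite non-backtracking path that has $\TheCode$ as initial segment and from $\TheEdge[\TheLevel]$ onwards always follows the preferred outgoing turn. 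The point is that the tail appended to $\TheCode$ is determined by the last edge $\TheEdge[\TheLevel]$ alone; since $\TheEdge\turn\TheCode$ has the same last edge as $\TheCode$, the same tail gets appended and hence $\Prefer{(\TheEdge\turn\TheCode)}=\TheEdge\turn\Prefer{\TheCode}$. I would also stress that only this prepending property of $\Prefer{\DummyArg}$ is used below, so in fact any ``append a fixed tail depending only on the last edge'' rule would serve equally well.

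For the identity $\TheProjection[\TheLevel]\AlgTf=\AlgTf\TheProjection[\TheLevel+1]$ (an identity of maps $\Depends[\infty]\to\Depends[\TheLevel]$) I would evaluate on an arbitrary path $\ThePath=\TheEdge[1]\turn\TheEdge[2]\turn\cdots$ and write $\TheCode:=\TheEdge[1]\turn\cdots\turn\TheEdge[\TheLevel]$ for its level-$\TheLevel$ postal code. By the definition of $\TheProjection[\TheLevel]$ and then of $\AlgTf$ (using that $\Prefer{\TheCode}$ begins with $\TheEdge[1]$),
\[
  (\TheProjection[\TheLevel]\AlgTf\FnPath)(\ThePath)
  =(\AlgTf\FnPath)(\Prefer{\TheCode})
  =\Sum[{\TheEdge[0]\turn\TheEdge[1]}]{
     \FnPathOf{\TheEdge[0]\turn\Prefer{\TheCode}}
   }.
\]
On the other hand $(\AlgTf\TheProjection[\TheLevel+1]\FnPath)(\ThePath)=\Sum[{\TheEdge[0]\turn\TheEdge[1]}]{(\TheProjection[\TheLevel+1]\FnPath)(\TheEdge[0]\turn\ThePath)}$, and the summand $(\TheProjection[\TheLevel+1]\FnPath)(\TheEdge[0]\turn\ThePath)$ depends only on the first $\TheLevel+1$ edges $\TheEdge[0],\TheEdge[1],\dots,\TheEdge[\TheLevel]$ of $\TheEdge[0]\turn\ThePath$, i.e.\ only on the postal code $\TheEdge[0]\turn\TheCode$; hence it equals $\FnPathOf{\Prefer{(\TheEdge[0]\turn\TheCode)}}=\FnPathOf{\TheEdge[0]\turn\Prefer{\TheCode}}$ by the prepending identity. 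The two sums then agree term by term, which is the assertion.

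I do not expect a genuine obstacle here; the only thing to watch is the index bookkeeping. The auxiliary summation edge $\TheEdge[0]$ introduced by $\AlgTf$ lengthens the relevant prefix from $\TheLevel$ to $\TheLevel+1$, which is exactly why the level is shifted down by one on the $\AlgTf$-side of the identity, and it is precisely the compatibility of $\Prefer{\DummyArg}$ with prepending an edge that lets the term-by-term matching go through.
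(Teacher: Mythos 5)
Your proof is correct and is exactly the verification the paper intends: the Observation is stated without proof, and your argument (tail depends only on the last edge, hence the prepending identity, hence term-by-term matching of the two sums after noting that $\AlgTf$ lengthens the relevant prefix by one) fills in precisely the implicit reasoning. The only cosmetic remark is that the identity is later applied to arbitrary $\FnPath\in\LipMaps[\MetricParameter]$, not just to $\Depends[\infty]$; your computation uses no regularity and goes through verbatim in that generality.
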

  \begin{theorem}
    For
    \(
      \MetricParameter\in
      (0,\frac{\AbsValueOf{\TheTwist}}{\MaxModDegree})
    \),
    the eigenspace $\Equalizer{\DualAlgTf}{\TheTwist\Identity}$ is contained
    in the continuous dual $\ContDual{\LipMaps[\MetricParameter]}$ and
    equals the eigenspace
    $\Equalizer{\DualTf[\MetricParameter]}{\TheTwist\Identity}$.
  \end{theorem}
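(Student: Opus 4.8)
The plan is to show two inclusions. First I would argue that every $\TheMeasure \in \Equalizer{\DualAlgTf}{\TheTwist\Identity}$ extends to a continuous functional on $\LipMaps[\MetricParameter]$ and is itself a $\TheTwist$-eigenvector of $\DualTf[\MetricParameter]$; then conversely that every $\TheTwist$-eigenvector of $\DualTf[\MetricParameter]$ restricts to an element of $\Equalizer{\DualAlgTf}{\TheTwist\Identity}$. Throughout I would use the standing hypotheses of this section, namely that $\TheGraph$ is finite (so $\PathSet$ is compact and $\Depends[\infty] = \LocConstOn{\PathSet}$ is dense in $\LipMaps[\MetricParameter]$), and that $\MetricParameter \MaxModDegree < \AbsValueOf{\TheTwist}$.

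For the first inclusion, fix $\TheMeasure \in \Equalizer{\DualAlgTf}{\TheTwist\Identity}$. By \pref{action-at-a-distance} its value on a length-$\TheLength$ postal code $\TheEdge[1] \turn \cdots \turn \TheEdge[\TheLength] \turn \AltEdge$ is $\TheTwist^{-\TheLength}\TheMeasureOf{\AltEdge}$. I want to estimate $\AbsValueOf{\PairingOf{\FnPath}{\TheMeasure}}$ for $\FnPath \in \Depends[\infty]$ in terms of $\InnerNormOf[\MetricParameter]{\FnPath}$. The idea is to write $\FnPath$ as a telescoping sum $\FnPath = \TheProjection[1]\FnPath + \Sum[\TheLength=1][\infty]{(\TheProjection[\TheLevel+1]\FnPath - \TheProjection[\TheLevel]\FnPath)}$ (a finite sum since $\FnPath$ is locally constant), where the projections are those of Observation~\ref{good-choice} built from preferred turns. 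The measure $\TheMeasure$ pairs with $\TheProjection[1]\FnPath \in \Depends[1]$ by a finite sum over $\PrefixSet[1]$, bounded by $(\Sum[\TheEdge\in\EdgeSet]{\AbsValueOf{\TheMeasureOf{\TheEdge}}}) \NormOf[\infty]{\FnPath}$. For the increment $\TheProjection[\TheLevel+1]\FnPath - \TheProjection[\TheLevel]\FnPath$, which is supported on level-$\TheLevel$ districts and has sup-norm at most $\MetricParameter^{\TheLevel}\InnerConstOf[\MetricParameter]{\FnPath}$ by Lipschitz continuity, I pair against $\TheMeasure$ and use that the total mass of $\AbsValueOf{\TheMeasure}$ on level-$(\TheLength+1)$ postal codes is at most $\MaxModDegree^{\TheLength}\AbsValueOf{\TheTwist}^{-\TheLength}(\Sum[\TheEdge]{\AbsValueOf{\TheMeasureOf{\TheEdge}}})$ — each edge spawns at most $\MaxModDegree$ extensions per digit, each picking up a factor $\AbsValueOf{\TheTwist}^{-1}$. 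This yields a bound of the shape $(\MetricParameter\MaxModDegree/\AbsValueOf{\TheTwist})^{\TheLength}$ times $\InnerConstOf[\MetricParameter]{\FnPath}$ times a constant, and the geometric series converges precisely because $\MetricParameter\MaxModDegree < \AbsValueOf{\TheTwist}$. Hence $\TheMeasure$ is $\InnerNormOf[\MetricParameter]{\DummyArg}$-continuous on the dense subspace $\Depends[\infty]$ and extends uniquely to $\ContDual{\LipMaps[\MetricParameter]}$. That the extension is a $\TheTwist$-eigenvector of $\DualTf[\MetricParameter]$ follows by continuity and density from the algebraic identity $\DualAlgTfOf{\TheMeasure} = \TheTwist\TheMeasure$, since $\Tf[\MetricParameter]$ restricts to $\AlgTf$ on $\Depends[\infty]$.

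For the reverse inclusion I would take $\TheMeasure \in \Equalizer{\DualTf[\MetricParameter]}{\TheTwist\Identity} \subseteq \ContDual{\LipMaps[\MetricParameter]}$ and restrict it to $\Depends[\infty] = \LocConstOn{\PathSet}$. Because $\Depends[\infty]$ is contained in $\LipMaps[\MetricParameter]$, this restriction is a linear functional on $\LocConstOn{\PathSet}$, hence by the Proposition identifying $\FinAddMeasuresOn{\PathSet}$ with the algebraic dual of $\LocConstOn{\PathSet}$ it corresponds to a finitely additive measure, which I still call $\TheMeasure$. The eigenvalue equation $\DualTf[\MetricParameter]\TheMeasure = \TheTwist\TheMeasure$ on $\LipMaps[\MetricParameter]$ restricts to $\DualAlgTf\TheMeasure = \TheTwist\TheMeasure$ — concretely, evaluating on the characteristic functions $\CharFct[\TheEdge\turn\TheCode] \in \Depends[\infty]$ and using \pref{tf-on-char-fct} reproduces \pref{dual-tf-on-measure}. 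So $\TheMeasure \in \Equalizer{\DualAlgTf}{\TheTwist\Identity}$, completing the proof.

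The main obstacle is the convergence estimate in the first step: one has to bound the total variation of an eigenmeasure $\TheMeasure$ across level-$\TheLength$ districts and see that it grows no faster than $(\MaxModDegree/\AbsValueOf{\TheTwist})^{\TheLength}$, and then marry this with the $\MetricParameter^{\TheLength}$ decay of the Lipschitz increments. Getting the bookkeeping right — in particular keeping the constant independent of $\TheLength$ and tracking that each one-digit extension contributes both a combinatorial factor $\leq \MaxModDegree$ and an analytic factor $\AbsValueOf{\TheTwist}^{-1}$ via \pref{action-at-a-distance} — is the only real work; once the geometric series $\Sum[\TheLength][\infty]{(\MetricParameter\MaxModDegree/\AbsValueOf{\TheTwist})^{\TheLength}}$ is seen to converge, continuity and hence the identification with $\Equalizer{\DualAlgTf}{\TheTwist\Identity}$ are formal.
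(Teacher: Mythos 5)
Your quantitative estimate is the right one and matches the paper's: the total variation of an eigenmeasure over level-$(\TheLength+1)$ postal codes grows at most like $\MaxModDegree^{\TheLength}\AbsValueOf{\TheTwist}^{-\TheLength}$ by \pref{action-at-a-distance}, and marrying this with the $\MetricParameter^{\TheLength}$ decay of the increments gives the convergent geometric series. But the functional-analytic skeleton of your argument rests on the claim that $\Depends[\infty]=\LocConstOn{\PathSet}$ is dense in $\LipMaps[\MetricParameter]$, and this is false — the paper explicitly points this out in the remark following the theorem. The approximants $\TheProjection[\TheLevel]\FnPath$ converge to $\FnPath$ in sup-norm, but $\InnerConstOf[\MetricParameter]{\FnPath-\TheProjection[\TheLevel]\FnPath}$ need not tend to zero, so they do not converge in $\InnerNormOf[\MetricParameter]{\DummyArg}$. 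This breaks three steps at once: (i) boundedness of $\TheMeasure$ on $\Depends[\infty]$ yields an extension to $\ContDual{\LipMaps[\MetricParameter]}$ only via Hahn--Banach, which is neither unique nor automatically an eigenvector; (ii) the assertion that the extension satisfies $\DualTf[\MetricParameter]$-eigenvalue equation ``by continuity and density'' has no content without density; (iii) in the reverse inclusion, showing that the restriction of $\TheEigenForm\in\Equalizer{\DualTf[\MetricParameter]}{\TheTwist\Identity}$ to $\Depends[\infty]$ lies in $\Equalizer{\DualAlgTf}{\TheTwist\Identity}$ does not show that $\TheEigenForm$ \emph{is} the extension of that restriction — a priori two distinct continuous eigenforms could restrict to the same finitely additive measure, so equality of the two eigenspaces does not follow.

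The repair, which is how the paper proceeds, is to make the extension explicit: define $\PairingOf{\FnPath}{\TheMeasure}:=\Lim[\TheLevel\rightarrow\infty]{\PairingOf{\TheProjection[\TheLevel]\FnPath}{\TheMeasure}}$ for \emph{every} $\FnPath\in\LipMaps[\MetricParameter]$, not just locally constant ones; your geometric-series estimate is exactly what shows this limit exists and is controlled by $\InnerNormOf[\MetricParameter]{\FnPath}$, giving continuity directly. The eigen-equation for the extension is then verified not by density but by the intertwining relation $\TheProjection[\TheLevel]\AlgTf=\AlgTf\TheProjection[\TheLevel+1]$ of Observation~\ref{good-choice}, which lets one pass $\AlgTf$ through the limit. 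For the converse one must show that an arbitrary $\TheEigenForm\in\Equalizer{\DualTf[\MetricParameter]}{\TheTwist\Identity}$ coincides with the extension of its restriction; this follows from $\TheTwist^{\TheLevel}\TheEigenForm(\FnPath-\TheProjection[\TheLevel]\FnPath)=\TheEigenForm(\AlgTf^{\TheLevel}\FnPath-\AlgTf^{\TheLevel}\TheProjection[\TheLevel]\FnPath)$ together with Corollary~\ref{iteratedTf}, which forces $\TheEigenForm(\TheProjection[\TheLevel]\FnPath)\to\TheEigenForm(\FnPath)$. These are precisely the steps your proposal omits, and they are not formal consequences of the estimate.
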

  \begin{proof} (compare \cite[Lemma~5.11]{BHW22} and its proof)
    Let $\TheMeasure\in\Equalizer{\DualAlgTf}{\TheTwist\Identity}$. We
    have to extend the pairing $\PairingOf{\DummyArg}{\TheMeasure}$ from
    $\Depends[\infty]$ to the function space $\LipMaps[\MetricParameter]$
    and show that the extension is continuous.

    To define an extension, we use the projection operators
    \(
      \TheProjection[\TheLevel]\mapcolon\LipMaps[\MetricParameter]
      \rightarrow \Depends[\TheLevel]
    \)
    from before.

    For $\FnPath\mapcolon\PathSet\rightarrow\ComplexNumbers$, put
    \(
      \FnPath[\TheLevel]:=
      \TheProjectionOf[\TheLevel]{\FnPath}
      \in\Depends[\TheLevel]
    \).
    We claim that the sequence
    $(\PairingOf{\FnPath[\TheLevel]}{\TheMeasure})_m$ converges. To see this,
    let us compute the distance of consecutive terms. First we have by
    additivity of the measure:
    \[
      \PairingOf{
        \FnPath[\TheLevel] - \FnPath[\TheLevel+1]
      }{\TheMeasure}
      =
      \Sum[{\TheCode\in\PrefixSet[\TheLevel]}]{
        \Sum[{\TheCode\turn\TheEdge\in\PrefixSet[\TheLevel+1]}]{
          (\FnPathOf{\Prefer{\TheCode}}
          -
          \FnPathOf{\Prefer{(\TheCode\turn\TheEdge)}})
          \TheMeasureOf{\TheCode\turn\TheEdge}
        }
      }
    \]
    Note that there are at most
    $(\CardOf{\VertexSet})\MaxModDegree^{\TheLevel+1}$ summands. Thus, we
    find
    \[
      \AbsValueOf{
        \PairingOf{
          \FnPath[\TheLevel] - \FnPath[\TheLevel+1]
        }{\TheMeasure}
      }
      \leq
      \InnerConstOf[\MetricParameter]{\FnPath}
      (\CardOf{\VertexSet})
      \MaxModDegree^{\TheLevel+1}
      \MetricParameter^{\TheLevel}
      \AbsValueOf{\TheTwist}^{-\TheLevel}
      \Max[\TheEdge\in\EdgeSet]{
        \AbsValueOf{ \TheMeasureOf{\TheEdge} }
      }
    \]
    which tends to $0$ exponentially fast provided that
    $\frac{\MetricParameter\MaxModDegree}{\AbsValueOf{\TheTwist}}<1$.
    Consequently the sequence
    $(\PairingOf{\FnPath[\TheLevel]}{\TheMeasure})_{\TheLevel}$ converges.

    Thus, we put
    \[
      \PairingOf{ \FnPath }{ \TheMeasure }
      :=
      \Lim[\TheLevel\rightarrow\infty]{
        \PairingOf{ \FnPath[\TheLevel] }{ \TheMeasure }
      }
    \]
    and note that the map
    \[
      \PairingOf{ \DummyArg }{ \TheMeasure }
      \mapcolon
      \LipMaps[\MetricParameter] \longrightarrow \ComplexNumbers
    \]
    is linear. As the projection operator $\TheProjection[\AltLevel]$
    is the identity on each $\Depends[\TheLevel]$ for
    $\TheLevel\leq\AltLevel$, the pairing restricts to $\TheMeasure$ on
    each $\Depends[\TheLevel]$. I.e., we have defined an extension.

    It remains to see that
    \(
      \PairingOf{ \DummyArg }{ \TheMeasure }
    \)
    it continuous. Note first, that
    $\PairingOf{\TheProjectionOf[1]{\DummyArg}}{\TheMeasure}$ is bounded:
    its the composition of the bounded projection operator
    $\TheProjection[1]$ with the restriction of $\TheMeasure$ to the
    finite dimensional space $\Depends[1]$.
    
    Writing the difference
    \(
      \PairingOf{ \DummyArg }{ \TheMeasure }
      -
      \PairingOf{\TheProjectionOf[1]{\DummyArg}}{\TheMeasure}
    \)
    as a telescoping series, we estimate
    \begin{equation}\label{telescoping}
      \AbsValueOf{
        \PairingOf{ \FnPath }{ \TheMeasure }
        -
        \PairingOf{ \FnPath[1] }{ \TheMeasure }
      }
      \leq
      \Sum[\TheLevel=1][\infty]{
        \AbsValueOf{
          \PairingOf{ \FnPath[\TheLevel+1] - \FnPath[\TheLevel] }{ \TheMeasure }
        }
      }
      \leq
      \InnerConstOf[\MetricParameter]{\FnPath}
      \frac{1}{1-\MetricParameter\MaxModDegree\AbsValueOf{\TheTwist}^{-1}}
      (\CardOf{\VertexSet})
      \MaxModDegree
      \Max[\TheEdge\in\EdgeSet]{
        \AbsValueOf{ \TheMeasureOf{\TheEdge} }
      }
    \end{equation}
    using the geometric series. It follows that
    \(
      \PairingOf{ \DummyArg }{ \TheMeasure }
    \)
    is continuous.

    Now consider an eigenform
    \(
      \TheEigenForm\in
      \Equalizer{\DualTf[\MetricParameter]}{\TheTwist\Identity}
      \subspace
      \ContDual{\LipMaps[\MetricParameter]}
    \)
    and its restriction $\TheMeasure$ to $\Depends[\infty]$. We claim
    that $\TheEigenForm$ coincides with the extension
    $\PairingOf{\DummyArg}{\TheMeasure}$ just constructed. So,
    we consider an arbitrary function $\FnPath\in\LipMaps[\MetricParameter]$
    and compute
    \[
      (\TheTwist^{\TheLevel}\TheEigenForm)(\FnPath-\TheProjection[\TheLevel]\FnPath)
      =
      ({\DualTf[\MetricParameter]}^{\TheLevel}\TheEigenForm)
      (\FnPath-\TheProjection[\TheLevel]\FnPath)
      =
      \TheEigenForm(
      \AlgTf^{\TheLevel}\FnPath-\AlgTf^{\TheLevel}\TheProjection[\TheLevel]\FnPath
      ).
    \]
    As the spectral radius of $\AlgTf$ is at most $\MaxModDegree$,
    Corollary~\ref{iteratedTf} implies that
    \[
      \AbsValueOf{
        \TheEigenForm
        (\FnPath)
        -
        \TheEigenForm(\TheProjection[\TheLevel]\FnPath)
      }
      \leq
      \NormOf[\op]{\TheEigenForm}
      \AbsValueOf{\TheTwist}^{-\TheLevel}
      (\MaxModDegree+\TheEps)^{\TheLevel}
      \MetricParameter^{\TheLevel}
      \InnerConstOf[\MetricParameter]{\FnPath}
    \]
    holds for any $\TheEps>0$, whence
    \(
      \PairingOf{\TheProjection[\TheLevel]\FnPath}{\TheMeasure}
      =
      \TheEigenForm
      (\TheProjection[\TheLevel]\FnPath)
    \)
    converges to $\TheEigenForm(\FnPath)$.

    Finally, let us assume that the projection operators
    $\TheProjection[\TheLevel]$ are chosen as in
    Observation~\ref{good-choice}. Then, the extension
    \(
      \PairingOf{ \DummyArg }{ \TheMeasure }
    \)
    is an eigenfunction of the dual transfer operator
    $\DualTf[\MetricParameter]$ for the eigenvalue $\TheTwist$. To see
    this, we compute
    \[
      \PairingOf{ \AlgTf\FnPath }{ \TheMeasure }
      =
      \Lim[\TheLevel\rightarrow\infty]{
        \PairingOf{ \TheProjectionOf[\TheLevel]{ \AlgTf\FnPath } }{ \TheMeasure }
      }
      =
      \Lim[\TheLevel\rightarrow\infty]{
        \PairingOf{ \AlgTfOf{ \TheProjection[\TheLevel+1] \FnPath } }{ \TheMeasure }
      }
      =
      \Lim[\TheLevel\rightarrow\infty]{
        \PairingOf{ \TheProjection[\TheLevel+1] \FnPath }{ \TheTwist\TheMeasure }
      }
      =
      \TheTwist\PairingOf{ \FnPath }{ \TheMeasure }
    \]
    Now all claims have been proved.
  \end{proof}

  \begin{remark}
    To complete the picture, we remark that the extension
    $\PairingOf{\DummyArg}{\TheMeasure}$ does not depend on the particular
    choice of the $\Prefer{\TheCode}$. Independence can be seen as in the
    proof of \cite[Lemma~5.11]{BHW22}. This independence does not mean that the finitely
    additive measure $\TheMeasure$ has a unique continuous extension to
    $\LipMaps[\MetricParameter]$. Since $\Depends[\infty]$ is not dense in
    $\LipMaps[\MetricParameter]$, this cannot be expected. However, any
    choice of projection operators $\TheProjection[\TheLevel]$ yields the
    same extension, which is the unique extension of $\TheMeasure$ that
    lies in the eigenspace
    $\Equalizer{\DualTf[\MetricParameter]}{\TheTwist\Identity}$.
  \end{remark}

  \begin{remark}
    For given $z\neq 0$ and for $\MetricParameter$ small enough, the eigenspaces of
    $\Tf[\MetricParameter]$ and $\DualTf[\MetricParameter]$ coincide with
    the eigenspaces
    \(
      \Equalizer{\AlgTf}{\TheTwist\Identity}
      \subspace\Depends[\infty]
    \)
    and 
    \(
      \Equalizer{\DualAlgTf}{\TheTwist\Identity}
      \subspace\FinAddMeasuresOn{\PathSet}
    \),
    respectively.
    Corollary~\ref{canonical-transpose} yields therefore an isomorphism
    \[
      \TheIsom[\MetricParameter]
      \mapcolon
      \Equalizer{\Tf[\MetricParameter]}{\TheTwist\Identity}
      \longrightarrow
      \Equalizer{\DualTf[\MetricParameter]}{\TheTwist\Identity}
    \]
    and since these eigenspaces have finite dimension, the isomorphism
    $\TheIsom[\MetricParameter]$ is automatically continuous with respect
    to the given norms on either side. However, the operator norm of
    $\TheIsom[\MetricParameter]$ depends on $\MetricParameter$.

    We can use the estimate~\pref{telescoping} to bound
    $\NormOf[\op]{\TheIsom[\MetricParameter]}$. For
    $\FnPath\in\LipMaps[\MetricParameter]$ and a finitely additive measure
    $\TheMeasure\in
    \Equalizer{\DualTf[\MetricParameter]}{\TheTwist\Identity}$, we have by
    straight forward computation:
    \begin{align*}
      \AbsValueOf{
      \PairingOf{\FnPath}{\TheMeasure}
      }
      & \leq
        \NormOf[\infty]{\FnPath}
        \Max[\TheEdge\in\EdgeSet]{
        \AbsValueOf{ \TheMeasureOf{\TheEdge} }
        }
        +
        \InnerConstOf[\MetricParameter]{\FnPath}
        \frac{1}{1-\MetricParameter\MaxModDegree\AbsValueOf{\TheTwist}^{-1}}
        (\CardOf{\VertexSet})
        \MaxModDegree
        \Max[\TheEdge\in\EdgeSet]{
        \AbsValueOf{ \TheMeasureOf{\TheEdge} }
        }
      \\
      & \leq
        \InnerNormOf[\MetricParameter]{\FnPath}
        \frac{1}{1-\MetricParameter\MaxModDegree\AbsValueOf{\TheTwist}^{-1}}
        (\CardOf{\VertexSet})
        \MaxModDegree
        \Max[\TheEdge\in\EdgeSet]{
        \AbsValueOf{ \TheMeasureOf{\TheEdge} }
        }
    \end{align*}
    So the operator norm of the extension $\PairingOf{\DummyArg}{\TheMeasure}$
    to $\LipMaps[\MetricParameter]$ is bounded from above by
    \(
      \frac{1}{1-\MetricParameter\MaxModDegree\AbsValueOf{\TheTwist}^{-1}}
      (\CardOf{\VertexSet})
      \MaxModDegree
      \Max[\TheEdge\in\EdgeSet]{
        \AbsValueOf{ \TheMeasureOf{\TheEdge} }
      }
    \).

    On the other hand, the eigenfunction in
    $\LipMaps[\MetricParameter]$ that corresponds to $\TheMeasure$ by
    Corollary~\ref{canonical-transpose} is given by:
    \[
      (\TheEdge[1]\turn\TheEdge[2]\turn\cdots)
      \longmapsto
      \TheMeasureOf{\OppositeOf{\TheEdge[1]}}
    \]
    It lies in $\Depends[1]$ where
    $\InnerNormOf[\MetricParameter]{\DummyArg}$ and
    $\NormOf[\infty]{\DummyArg}$ coincide. The norm is therefore given by
    \(   
      \Max[\TheEdge\in\EdgeSet]{
        \AbsValueOf{ \TheMeasureOf{\TheEdge} }
      }
    \).

    Hence, the operator norm of $\TheIsom[\MetricParameter]$ is
    bounded from above by
    \(
      \frac{1}{1-\MetricParameter\MaxModDegree\AbsValueOf{\TheTwist}^{-1}}
      (\CardOf{\VertexSet})
      \MaxModDegree
    \)
    which has a pole at
    $\MetricParameter=\frac{\AbsValueOf{\TheTwist}}{\MaxModDegree}$.
  \end{remark}

  We can characterize when the pairing
  $\PairingOf{\DummyArg}{\DummyArg}$ defined in~\pref{pairing} is
  degenerate on
  \(
    \Equalizer{\AlgTf}{\TheTwist\Identity}
    \crossprod
    \Equalizer{\DualAlgTf}{\TheTwist\Identity}
  \)
  by combining the isomorphisms of
  Corollary~\ref{transfer-on-loc-const} and Theorem~\ref{dual-main}.  We
  start with a lemma from linear algebra for which we were not able to
  find a reference in the standard literature.  It seems suitable as a
  textbook exercise.

  \begin{lemma}\label{degeneracy-on-eigenspaces}
    Assume that
    \(
      \TheFormOf{\DummyArg}{\DummyArg} \mapcolon
      \TheVectorSpace\crossprod\AltVectorSpace
      \rightarrow \ComplexNumbers
    \)
    is a non-degenerate bilinear pairing of two finite dimensional vector spaces.
    Assume that $\TheLinOp\mapcolon\TheVectorSpace\rightarrow\TheVectorSpace$
    and $\AltLinOp\mapcolon\AltVectorSpace\rightarrow\AltVectorSpace$ are
    adjoint operators, i.e.,
    \(
      \TheFormOf{\TheLinOp\TheVector}{\AltVector}
      =
      \TheFormOf{\TheVector}{\AltLinOp\AltVector}
    \).
    
    Then for any eigenvalue $\TheEigenvalue$, the restriction of
    \(
      \TheFormOf{\DummyArg}{\DummyArg}
    \)
    to the generalized eigenspaces
    \(
      \Ker(\TheLinOp-\TheEigenvalue)^{\TheNilpotency}
      \crossprod
      \Ker(\AltLinOp-\TheEigenvalue)^{\AltNilpotency}
    \)
    is non-degenerate. The restriction to the eigenspaces
    \(
      \Ker(\TheLinOp-\TheEigenvalue)
      \crossprod
      \Ker(\AltLinOp-\TheEigenvalue)
    \)
    is non-degenerate if and only if there are no generalized eigenvectors
    of rank~2.

    Moreover, if $\TheLinOp$ has generalized eigenvectors of rank~2 then so
    does $\AltLinOp$, and vice versa.
  \end{lemma}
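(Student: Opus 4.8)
The plan is to reduce first to the case of a single eigenvalue by showing that the pairing is block‑diagonal with respect to the primary decompositions $V = \bigoplus_\mu V_\mu$ and $W = \bigoplus_\mu W_\mu$ of $A$ and $B$ into generalized eigenspaces $V_\mu = \ker(A-\mu)^{\dim V}$, $W_\mu = \ker(B-\mu)^{\dim W}$. For $\mu \neq \nu$ and $v \in V_\mu$ with $(A-\mu)^k v = 0$, one has $0 = ((A-\mu)^k v \mid w) = (v \mid (B-\mu)^k w)$ for every $w \in W_\nu$ by iterating the adjointness relation; since $B-\mu$ is invertible on $W_\nu$ (its only eigenvalue there is $\nu - \mu \neq 0$), the vectors $(B-\mu)^k w$ run through all of $W_\nu$, so $(v \mid W_\nu) = 0$, and hence $(V_\mu \mid W_\nu) = 0$. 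Because the global pairing is non‑degenerate and block‑diagonal, a vector in the radical of the block $(V_\mu \mid W_\mu)$ would lie in the radical of the whole pairing; so each block is non‑degenerate, which is the first assertion, and in particular $\dim V_\mu = \dim W_\mu$.

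Next I would fix $\lambda$, replace $V, W$ by $V_\lambda, W_\lambda$ and $A, B$ by the nilpotent operators $N = (A-\lambda)|_{V_\lambda}$ and $M = (B-\lambda)|_{W_\lambda}$, which remain adjoint for the (now non‑degenerate) restricted pairing. A generalized eigenvector of rank $2$ for $A$ at $\lambda$ is exactly a vector in $\ker N^2 \setminus \ker N$, and for a nilpotent operator this set is nonempty precisely when $N \neq 0$ (otherwise the kernel chain would already be stable at step one, forcing $\ker N = V_\lambda$). The crux is then to identify the left radical of the restricted pairing on $\ker N \times \ker M$ with $\ker N \cap \operatorname{im} N$: identifying $W_\lambda$ with the dual of $V_\lambda$ via the pairing turns $M$ into the transpose of $N$, so $\ker M$ becomes the annihilator of $\operatorname{im} N$, and the left radical of the pairing restricted to $\ker N \times \ker M$ is $\ker N$ intersected with the double annihilator of $\operatorname{im} N$, i.e. $\ker N \cap \operatorname{im} N$. (Equivalently one shows $\operatorname{im} N \subseteq (\ker M)^{\perp}$ from adjointness and matches dimensions, using $\operatorname{rank} M = \operatorname{rank} N$.) If $N \neq 0$, choosing $v \in \ker N^2 \setminus \ker N$ gives $0 \neq Nv \in \ker N \cap \operatorname{im} N$, so the radical is nonzero; if $N = 0$ the intersection is trivial. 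Thus the eigenspace pairing is non‑degenerate iff $N = 0$ iff there are no rank‑$2$ generalized eigenvectors, which is the second assertion.

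For the last assertion, assume $A$ has a rank‑$2$ generalized eigenvector at $\lambda$, i.e. $N \neq 0$, and pick $v_0 \in V_\lambda$ with $Nv_0 \neq 0$; by non‑degeneracy of the restricted block pairing there is $w_0 \in W_\lambda$ with $(Nv_0 \mid w_0) \neq 0$, whence $(v_0 \mid Mw_0) = (Nv_0 \mid w_0) \neq 0$ forces $M \neq 0$, so $B$ has a rank‑$2$ generalized eigenvector at $\lambda$ as well; the argument is symmetric in $A$ and $B$. The only genuinely delicate point is the radical computation in the middle paragraph, so I would take care to spell out there the dual‑space identification (or the rank equality $\operatorname{rank} M = \operatorname{rank} N$); everything else is routine bookkeeping on top of the adjointness manipulations.
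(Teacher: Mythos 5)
Your proof is correct. The skeleton matches the paper's: block the pairing along generalized eigenspaces, deduce non-degeneracy of each $(V_\lambda\mid W_\lambda)$ block, then reduce to the nilpotent operators $N=(A-\lambda)|_{V_\lambda}$ and $M=(B-\lambda)|_{W_\lambda}$. Where you diverge is in how you treat the eigenspace pairing: you identify the left radical of $(\,\cdot\mid\cdot\,)$ on $\ker N\times\ker M$ as $\ker N\cap\operatorname{im} N$ via the transpose identification $W_\lambda\cong V_\lambda^*$ and the double-annihilator equality $(\ker M)^\perp=\operatorname{im} N$, which packages both directions of the equivalence (degenerate iff rank-$2$ vectors exist) in one identity. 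The paper instead argues each direction separately and more concretely: for one direction it simply exhibits $(A-\lambda)v$ as a nonzero radical vector whenever $v$ has rank $2$; for the other it relies on the fact that absence of rank-$2$ vectors makes the eigenspace equal to the generalized eigenspace, to which the first part applies. Your computation of the full radical is a mild generalization of the paper's observation that $(A-\lambda)v$ is orthogonal to $\ker(B-\lambda)$, and it has the advantage of making the "delicate point" you flag a standard duality fact rather than an ad hoc estimate. The last claim is handled essentially the same way in both: produce a $w$ pairing nontrivially against $Nv$ and note this forces $Mw\neq 0$, hence $M\neq 0$, hence $B$ also has rank-$2$ generalized eigenvectors.
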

  
  \begin{proof}
    In view of the Jordan normal form, there are decompositions
    \[
      \TheVectorSpace=
      \underbrace{
        \Ker(\TheLinOp-\TheEigenvalue)^{\TheNilpotency}
      }_{=:\TheGenEigenspace[\TheEigenvalue]}\directsum\,\TheComplement
      \quad\text{and}\quad
      \AltVectorSpace=
      \underbrace{
        \Ker(\AltLinOp-\TheEigenvalue)^{\AltNilpotency}
      }_{=:\AltGenEigenspace[\TheEigenvalue]} \directsum\,\AltComplement
    \]
    of $\TheVectorSpace$ and $\AltVectorSpace$ into generalized
    eigenspaces and complements such that $\TheLinOp-\TheEigenvalue$ is
    invertible on $\TheComplement$ and likewise
    $\AltLinOp-\TheEigenvalue$ is invertible on $\AltComplement$. Note
    that for $\TheVector\in\TheComplement$ and
    $\AltVector\in\AltGenEigenspace[\TheEigenvalue]$ we have:
    \[
      \TheFormOf{\TheVector}{\AltVector}
      =
      \TheFormOf{
        (\TheLinOp-\TheEigenvalue)^{\AltNilpotency}
        (\TheLinOp-\TheEigenvalue)^{-\AltNilpotency}
        \TheVector
      }{
        \AltVector
      }
      =
      \TheFormOf{
        (\TheLinOp-\TheEigenvalue)^{-\AltNilpotency}
        \TheVector
      }{
        (\AltLinOp-\TheEigenvalue)^{\AltNilpotency}
        \AltVector
      }
      =
      0
    \]
    Thus $\TheFormOf{\DummyArg}{\DummyArg}$ vanishes on
    \(
      \TheComplement\crossprod\AltGenEigenspace[\TheEigenvalue]
    \)
    and, analogously, on
    \(
      \TheGenEigenspace[\TheEigenvalue]\crossprod\AltComplement
    \).
    Hence, $\TheFormOf{\DummyArg}{\DummyArg}$ is non-degenerate on
    \(
      \TheGenEigenspace[\TheEigenvalue]\crossprod\AltGenEigenspace[\TheEigenvalue]
    \).
    
    Now, consider a rank~2 eigenvector $\TheVector$. From
    \[
      \TheFormOf{(\TheLinOp-\TheEigenvalue)\TheVector}{\AltVector}
      =
      \TheFormOf{\TheVector}{(\AltLinOp-\TheEigenvalue)\AltVector}
    \]
    we see that the eigenvector
    $(\TheLinOp-\TheEigenvalue)\TheVector$ is orthogonal to all of
    $\KernelOf{\AltLinOp-\TheEigenvalue}$. Thus, the restriction of
    $\TheFormOf{\DummyArg}{\DummyArg}$ to
    \(
      \KernelOf{\TheLinOp-\TheEigenvalue}\crossprod\KernelOf{\AltLinOp-\TheEigenvalue}
    \)
    is degenerate if $\TheLinOp$ has generalized eigenvectors of
    rank~2. In this case, $\AltGenEigenspace[\TheEigenvalue]$ is also
    strictly larger than the eigenspace
    $\KernelOf{\AltLinOp-\TheEigenvalue}$ because there is some vector
    $\AltVector\in\AltGenEigenspace[\TheEigenvalue]$ with
    $\TheFormOf{(\TheLinOp-\TheEigenvalue)\TheVector}{\AltVector}\neq 0$.
  \end{proof}
  
  \begin{proposition}\label{degeneracy}
    Let $z\not= 0$. Then the pairing
    $\PairingOf{\DummyArg}{\DummyArg}$ defined in~\pref{pairing} is
    non-degenerate when restricted to
    \(
      \Equalizer{\AlgTf}{\TheTwist\Identity}
      \crossprod
      \Equalizer{\DualAlgTf}{\TheTwist\Identity}
    \)
    if and only if the eigenspace
    $\Equalizer{\TurnSum}{\TheTwist\Identity}$ coincides with the
    generalized eigenspace of $\TurnSum$ for the eigenvalue $\TheTwist$.
  \end{proposition}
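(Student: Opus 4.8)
The plan is to transport the question to the finite-dimensional space $\MapsOn{\EdgeSet}$, where Lemma~\ref{degeneracy-on-eigenspaces} becomes applicable. Recall that under the identification $\Depends[1]=\MapsOn{\EdgeSet}$, on which $\AlgTf$ restricts to $\TurnSum$, Corollary~\ref{transfer-on-loc-const} gives $\Equalizer{\AlgTf}{\TheTwist\Identity}=\Equalizer{\TurnSum}{\TheTwist\Identity}$, while Theorem~\ref{dual-main} yields an isomorphism $\Equalizer{\DualAlgTf}{\TheTwist\Identity}\xrightarrow{\sim}\Equalizer{\TurnSum}{\TheTwist\Identity}$, $\TheMeasure\mapsto\bigl[\TheEdge\mapsto\TheMeasureOf{\OppositeOf{\TheEdge}}\bigr]$. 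Since $\TheGraph$ is finite, $\MapsOn{\EdgeSet}$ is finite-dimensional. So the first task is to determine the bilinear form that $\PairingOf{\DummyArg}{\DummyArg}$ induces on $\Equalizer{\TurnSum}{\TheTwist\Identity}\crossprod\Equalizer{\TurnSum}{\TheTwist\Identity}$ through these two isomorphisms.

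To do this, take $\FnEdge,\AltFnEdge\in\Equalizer{\TurnSum}{\TheTwist\Identity}$, regard $\FnEdge$ as an element of $\Depends[1]\subseteq\Depends[\infty]$, and let $\TheMeasure$ be the measure attached to $\AltFnEdge$ by Theorem~\ref{dual-main}, so that $\TheMeasureOf{\TheEdge}=\AltFnEdgeOf{\OppositeOf{\TheEdge}}$ on one-digit postal codes (the twist factor $\TheTwist^{1-\TheLength}$ of Theorem~\ref{dual-main} is trivial for $\TheLength=1$). As $\PrefixSet[1]=\EdgeSet$, evaluating \pref{pairing} at level $1$ gives
\[
  \PairingOf{\FnEdge}{\TheMeasure}
  =
  \Sum[{\TheEdge\in\EdgeSet}]{\FnEdgeOf{\TheEdge}\,\TheMeasureOf{\TheEdge}}
  =
  \Sum[{\TheEdge\in\EdgeSet}]{\FnEdgeOf{\TheEdge}\,\AltFnEdgeOf{\OppositeOf{\TheEdge}}}
  =:
  \widetilde B(\FnEdge,\AltFnEdge).
\]
The form $\widetilde B$ on $\MapsOn{\EdgeSet}\crossprod\MapsOn{\EdgeSet}$ is the standard pairing precomposed in the first slot with the involution $\FnEdge\mapsto\FnEdge\compose\Opposite$, hence it is non-degenerate; re-indexing by $\TheEdge\mapsto\OppositeOf{\TheEdge}$ also shows that $\widetilde B$ is symmetric.

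The crux is to verify that $\TurnSum$ is self-adjoint with respect to $\widetilde B$. Substituting $(\TurnSum\FnEdge)(\TheEdge)=\Sum[\AltEdge\turn\TheEdge]{\FnEdgeOf{\AltEdge}}$ into $\widetilde B(\TurnSum\FnEdge,\AltFnEdge)$ and then replacing $\TheEdge$ by $\OppositeOf{\TheEdge}$ in the outer summation, using $\InitialOf{\OppositeOf{\TheEdge}}=\TerminalOf{\TheEdge}$ and the definition of a turn, one should land on
\[
  \widetilde B(\TurnSum\FnEdge,\AltFnEdge)
  =
  \Sum[{\AltEdge,\TheEdge\,:\,\TerminalOf{\AltEdge}=\TerminalOf{\TheEdge},\,\AltEdge\neq\TheEdge}]{
    \FnEdgeOf{\AltEdge}\,\AltFnEdgeOf{\TheEdge}
  },
\]
which is visibly symmetric under exchanging $(\FnEdge,\AltEdge)$ with $(\AltFnEdge,\TheEdge)$; combined with the symmetry of $\widetilde B$ this gives $\widetilde B(\TurnSum\FnEdge,\AltFnEdge)=\widetilde B(\FnEdge,\TurnSum\AltFnEdge)$. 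This bookkeeping with the turn relation and the edge-reversal map $\Opposite$ is the step that requires the most care, although it is routine.

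Finally, I would apply Lemma~\ref{degeneracy-on-eigenspaces} with $\TheVectorSpace=\AltVectorSpace=\MapsOn{\EdgeSet}$, the pairing $\widetilde B$, and $\TheLinOp=\AltLinOp=\TurnSum$. Its conclusion is that the restriction of $\widetilde B$ to $\Equalizer{\TurnSum}{\TheTwist\Identity}\crossprod\Equalizer{\TurnSum}{\TheTwist\Identity}$ is non-degenerate if and only if $\TurnSum$ has no generalized eigenvector of rank $2$ at $\TheTwist$, i.e.\ $\Ker(\TurnSum-\TheTwist\Identity)=\Ker(\TurnSum-\TheTwist\Identity)^{2}$, which by the Jordan decomposition is the same as $\Equalizer{\TurnSum}{\TheTwist\Identity}$ coinciding with the generalized eigenspace of $\TurnSum$ for $\TheTwist$. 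Since the two isomorphisms of the first paragraph carry $\widetilde B$ on $\Equalizer{\TurnSum}{\TheTwist\Identity}\crossprod\Equalizer{\TurnSum}{\TheTwist\Identity}$ to $\PairingOf{\DummyArg}{\DummyArg}$ on $\Equalizer{\AlgTf}{\TheTwist\Identity}\crossprod\Equalizer{\DualAlgTf}{\TheTwist\Identity}$, and non-degeneracy of a bilinear form is preserved under isomorphisms of the underlying spaces, the claimed equivalence follows.
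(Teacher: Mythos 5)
Your argument is correct and is essentially the paper's own proof: both introduce the form $\TheFormOf{\TheFnEdge}{\AltFnEdge}=\Sum[\TheEdge]{\TheFnEdgeOf{\TheEdge}\AltFnEdgeOf{\OppositeOf{\TheEdge}}}$ on $\MapsOn{\EdgeSet}$, check its non-degeneracy and the self-adjointness of $\TurnSum$, transport the pairing \pref{pairing} through the isomorphisms of Corollary~\ref{transfer-on-loc-const} and Theorem~\ref{dual-main}, and conclude with Lemma~\ref{degeneracy-on-eigenspaces}. You merely spell out the self-adjointness computation and the identification of the transported pairing, which the paper leaves as routine verifications.
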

  \begin{proof}
    Consider the symmetric bilinear form
    \[
      \TheFormOf{\TheFnEdge}{\AltFnEdge}
      :=
      \Sum[\TheEdge]{
        \TheFnEdgeOf{\TheEdge}
        \AltFnEdgeOf{\OppositeOf{\TheEdge}}
      }
    \]
    on $\MapsOn{\EdgeSet}$. The basis elements in $\EdgeSet$ come in
    pairs of opposite edges, whence the matrix of
    $\TheFormOf{\DummyArg}{\DummyArg}$ has block diagonal form with
    $(\begin{smallmatrix}0 & 1 \\ 1 & 0\end{smallmatrix})$ blocks. The
    determinant clearly does not vanish, whence the form is non-degenerate
    on $\MapsOn{\EdgeSet}$.

    It is easy to verify the identity
    \(
      \TheFormOf{\TurnSum\TheFnEdge}{\AltFnEdge}
      =
      \TheFormOf{\TheFnEdge}{\TurnSum\AltFnEdge}
      ,
    \)
    i.e., that $\TurnSum$ is a self-adjoint operator on
    $\MapsOn{\EdgeSet}$.

    We apply Lemma~\ref{degeneracy-on-eigenspaces} to the given pairing with
    $\TheVectorSpace=\AltVectorSpace=\MapsOn{\EdgeSet}$ and
    $\TheLinOp=\AltLinOp=\TurnSum$. Thus, it remains to show that
    $\PairingOf{\DummyArg}{\DummyArg}$ defined in~\pref{pairing} is
    non-degenerate when restricted to
    \(
      \Equalizer{\AlgTf}{\TheTwist\Identity}
      \crossprod
      \Equalizer{\DualAlgTf}{\TheTwist\Identity}
    \)
    if and only if $\TheFormOf{\DummyArg}{\DummyArg}$ is
    non-degenerate on
    \(
      \Equalizer{\TurnSum}{\TheTwist\Identity}
      \crossprod
      \Equalizer{\TurnSum}{\TheTwist\Identity}
    \).
    This, in turn, follows from
    Corollary~\ref{transfer-on-loc-const} and Theorem~\ref{dual-main}.
    The isomorphisms are explicit, and we have the following commutative
    diagram:
    \[
      \begin{tikzcd}
        \Equalizer{\AlgTf}{\TheTwist\Identity}
        \crossprod
        \Equalizer{\DualAlgTf}{\TheTwist\Identity}
        \arrow[rd,"\PairingOf{\DummyArg}{\DummyArg}"]
        \arrow[dd,"\isomorphic"]
        &
        {}
        \\
        {}
        &
        \ComplexNumbers
        \\
        \Equalizer{\TurnSum}{\TheTwist\Identity}
        \crossprod
        \Equalizer{\TurnSum}{\TheTwist\Identity}
        \arrow[ur,"\TheFormOf{\DummyArg}{\DummyArg}"']
        &
        {}
      \end{tikzcd}
    \]
    This is to say that the isomorphism of
    Corollary~\ref{transfer-on-loc-const} identifies $\AlgTf$ with
    $\TurnSum$ and the isomorphism of Theorem~\ref{dual-main} identifies
    $\DualAlgTf$ also with $\TurnSum$. The bilinear form
    $\TheFormOf{\DummyArg}{\DummyArg}$ is defined accordingly in view of
    \pref{pairing}. As $\AlgTf$ and $\DualAlgTf$ are adjoint with respect
    to $\PairingOf{\DummyArg}{\DummyArg}$, it makes sense that $\TurnSum$
    is self-adjoint with respect to the bilinear form
    $\TheFormOf{\DummyArg}{\DummyArg}$. The claim follows.
  \end{proof}

  \begin{remark}
    In \cite[\S~3]{LP16} our turn sum operator $\TurnSum$ appears as
    \emph{nonbacktracking walk matrix}. For $\ModDegree+1$-regular graphs
    with $\ModDegree\geq 2$ \cite[Prop.~3.1]{LP16} gives a normal form of
    this operator from which one can derive that if $\pm 2\sqrt\ModDegree$
    is an eigenvalue of $\TurnSum$, then it has a nontrivial Jordan
    block. So, while \cite{LP16} does not give an example of such a graph
    we expect that there actually exist graphs for which the pairing
    $\PairingOf{\DummyArg}{\DummyArg}$ is degenerate on
    \[
      \Equalizer{\AlgTf}{\TheTwist\Identity}
      \crossprod
      \Equalizer{\DualAlgTf}{\TheTwist\Identity}.
    \]   
  \end{remark}

  \begin{remark}\label{rem:Lubetzky-Peres}
    Combining Remark~\ref{rem:correspondence} with
    Theorem~\ref{dual-main} we see that the Ruelle resonances of the
    transfer operators actually agree with the eigenvalues of
    $\DualAlgTf$. Keeping in mind the analogy between hyperfunctions on
    the boundary of symmetric space and finitely additive measures for
    trees exhibited in \cite{BHW22} we follow the terminology for
    symmetric spaces and call the elements of
    $\Equalizer{\DualAlgTf}{\TheTwist\Identity}$ \emph{resonant states}.
  \end{remark}

  \section{Review: harmonic analysis on trees}
  Another approach to understand the spectral theory of the path space
  $\PathSet$ of a connected finite graph $\TheGraph$ without end points
  is to look at its universal cover. That is a tree of bounded degree
  also without terminal vertices. In this section we review some central
  results from \cite{BHW22} in that setting. In particular, we introduce the
  Poisson transform connecting Laplace eigenfunctions to certain
  distributions on the boundary.

  So let $\TheTree$ be a tree without terminal vertices
  (i.e. $\ModDegreeOf{\TheTreeVertex}\neq 0$ for any vertex
  $\TheTreeVertex\in\TheTree$) and of bounded degree (i.e. there is a
  constant $\MaxModDegree\geq\ModDegreeOf{\TheTreeVertex}$ for any vertex
  $\TheTreeVertex\in\TheTree$).

  As we ultimately need to relate a graph $\TheGraph$ to its universal
  cover, we need to distinguish entities upstairs (in the cover) and
  downstairs. For this reason, $\TreeVertexSet$, $\TreeEdgeSet$ and
  $\TreePathSet$ refer to the sets of vertices, oriented edges, and edge
  paths without backtracking in $\TheTree$, respectively; and
  $\TreeDistance[\MetricParameter]$ denotes the
  $\MetricParameter$-distance on $\TreePathSet$. We denote the
  corresponding operators by $\TreeGradient[\TheTwist]$,
  $\TreeRescale[\TheTwist]$, $\TreeLocalTweak[\TheTwist]$,
  $\TreeTurnSum$, $\TreeNeighborSum$, $\TreeNeighborAvg$, $\TreeTf$ and
  $\TreeTf[\MetricParameter]$, and their duals $\TreeDualTf$ and
  $\TreeDualTf[\MetricParameter]$.

  In a tree, it is natural to think of an edge path without
  backtracking as a geodesic ray. Two such rays are \notion{confluent}
  if they share infinitely many adges. Confluence is an equivalence
  relation on the set $\TreePathSet$ of geodesic rays; and the set of
  equivalence classes (\notion{ends}) $\TreeBoundary$ is the
  \notion{boundary} of $\TheTree$. For an oriented edge $\TreeEdge$, the
  geodesic rays starting with $\TreeEdge$ cut out the \notion{forward
    subset} $\ForwardBoundaryOf{\TreeEdge}$ of $\TreeBoundary$. Note that
  $\TreeBoundary$ is the disjoint union of
  $\ForwardBoundaryOf{\TreeEdge}$ and
  $\ForwardBoundaryOf{\OppositeOf{\TreeEdge}}$. Declaring the forward
  subsets to be basic open sets defines a topology on
  $\TreeBoundary$. In the language of \CatZero{} spaces, $\TreeBoundary$
  with this topology is the \notion{visual boundary} of $\TheTree$. As
  $\TheTree$ is locally finite, the boundary $\TreeBoundary$ is a
  compact totally disconnected space.

  \begin{remark}
    As the terminology ``boundary'' suggests, $\TreeBoundary$ occurs
    in a compactification of $\TheTree$. Specifically, the set
    $\TreeBoundary$ coincides with the set of ends of the tree $\TheTree$
    and $\TheTree\union\TreeBoundary$ is the end compactification of
    $\TheTree$.  A basis of open sets is again given by forward sets of
    oriented edges, but this time the forward set of $\TreeEdge$ contains
    all vertices in front of $\TreeEdge$ as well as
    $\ForwardBoundaryOf{\TreeEdge}$. The end topology on
    $\TheTree\union\TreeBoundary$ induces the discrete topology on the
    vertex set of $\TheTree$ and on $\TreeBoundary$ the already described
    topology.
  \end{remark}

  For each end $\TheEnd\in\TreeBoundary$ and each vertex
  $\TheTreeVertex\in\TheTree$, there is a unique geodesic ray from
  $\TheTreeVertex$ to $\TheEnd$, for which we use the notation
  $\RayFromTo{\TheTreeVertex}{\TheEnd}$. The space of ends $\TreeBoundary$
  can thus be identified with the island of $\TreePathSet$ consisting of
  edge paths issuing from $\TheTreeVertex$. Then
  \begin{align*}
    \TreeDistance[\MetricParameter,\TheTreeVertex] \mapcolon
    \TreeBoundary \crossprod \TreeBoundary & \longrightarrow \RealNumbers \\
    (\TheEnd,\AltEnd) & \longmapsto
                        \TreeDistanceOf[\MetricParameter]
                        {\RayFromTo{\TheTreeVertex}{\TheEnd}}
                        {\RayFromTo{\TheTreeVertex}{\AltEnd}}
  \end{align*}    
  is an ultrametric on $\TreeBoundary$.
  \begin{figure}
    \begingroup
    \setlength{\unitlength}{1cm}
    \newcommand{\makeSubtree}[4]{%
      \IfThenElse{#1=0}{}{%
        \begingroup
        \pgfmathtruncatemacro{\NewDepth}{#1-1}%
        \pgfmathsetmacro{\FirstAngle}{#3}%
        \pgfmathsetmacro{\SecondAngle}{#4}%
        \pgfmathsetmacro{\Middle}{((#3)+(#4))/2}%
        \coordinate (#2L) at ($(#2)+(\FirstAngle:0.9)$);
        \coordinate (#2R) at ($(#2)+(\SecondAngle:0.9)$);
        \draw [thin, gray] (#2) -- (#2L);
        \draw [thin, gray] (#2) -- (#2R);
        \makeSubtree{\NewDepth}{#2L}{#3}{((#3)+(#4))/2}%
        \makeSubtree{\NewDepth}{#2R}{((#3)+(#4))/2}{#4}%
        \fill (#2) circle (2pt);  
        \endgroup
      }%
    }%
    \begin{center}
      \begin{tikzpicture}[scale=0.7]
        \coordinate (O) at (0:0);
        \fill (O) circle (1pt);
        \coordinate (OA) at (60:1);
        \coordinate (OB) at (180:1);
        \coordinate (OC) at (300:1);
        \draw [thin, gray] (O) -- (OA);
        \draw [thin, gray] (O) -- (OB);
        \draw [thin, gray] (O) -- (OC);
        \makeSubtree{5}{OA}{0}{120};
        \makeSubtree{5}{OB}{120}{240};
        \makeSubtree{5}{OC}{240}{360};
        \draw [very thick, black] (OBRR) -- (OBR) -- (OB) -- (O) -- (OA) -- (OAR) -- (OARR) -- (OARRL) -- (OARRLR) -- (OARRLRR);
        \draw [very thick, black] (OBRR) -- (OBR) -- (OB) -- (O) -- (OA) -- (OAL) -- (OALR) -- (OALRL) -- (OALRLR) -- (OALRLRL);
        \fill [black] (OBRR) circle (4pt);
        \fill [black] (OA) circle (4pt);
        \draw [very thick, black] (OBRR) -- (OBR) -- (OB) -- (O) -- (OA);
        \node at (OARRLRR) [anchor=south] {$\TheEnd$};
        \node at (OALRLRL) [anchor=south west] {$\AltEnd$};
        \node at (OBRR) [anchor=north west] {$\TheTreeVertex$};
      \end{tikzpicture}
    \end{center}
    \endgroup
    \caption{The geometric meaning of
      $\HoroDistOf[\TheTreeVertex]{\TheEnd}{\AltEnd}$: in the instance
      shown, the rays $\RayFromTo{\TheTreeVertex}{\TheEnd}$ and
      $\RayFromTo{\TheTreeVertex}{\AltEnd}$ share four edges, whence
      $\HoroDistOf[\TheTreeVertex]{\TheEnd}{\AltEnd}=4$.}
  \end{figure}
  In the tree case one has the
  following geometric illustration. Any two distinct ends $\TheEnd$ and
  $\AltEnd$ are joined by a unique bi-infinite geodesic in
  $\TheTree$. The union of the two rays
  $\RayFromTo{\TheTreeVertex}{\TheEnd}$ and
  $\RayFromTo{\TheTreeVertex}{\AltEnd}$ is a tripod containing the geodesic
  from $\TheEnd$ to $\AltEnd$ as its two infinite legs. The third leg is
  of finite length, which we denote by
  $\HoroDistOf[\TheTreeVertex]{\TheEnd}{\AltEnd}$. Then:
  \[
    \TreeDistanceOf[\MetricParameter,\TheTreeVertex] {\TheEnd}{\AltEnd} =
    \MetricParameter^{\HoroDistOf[\TheTreeVertex]{\TheEnd}{\AltEnd}}
  \]
  Of course, we can recover the metric that we already defined on $\TreePathSet$
  with this identification as follows:
  \[
    \TreeDistanceOf[\MetricParameter]
    {\TheTreeVertex\to\TheEnd}{\AltTreeVertex\to\AltEnd}
    =
    \begin{cases}
      1 & \TheTreeVertex \neq \AltTreeVertex
      \\
      \TreeDistanceOf[\MetricParameter,\TheTreeVertex]{\TheEnd}{\AltEnd}
        & \TheTreeVertex = \AltTreeVertex
    \end{cases}
  \]

  It is easy to see that
  $\TreeDistance[\MetricParameter,\TheTreeVertex]$ and
  $\TreeDistance[\MetricParameter,\AltTreeVertex]$ are bi-Lipschitz
  equivalent for neighboring vertices $\TheTreeVertex$ and
  $\AltTreeVertex$. It follows that the equivalence class of the metric
  $\TreeDistance[\MetricParameter,\TheTreeVertex]$ does not depend on
  the vertex $\TheTreeVertex$. Moreover, the topology defined by
  $\TreeDistance[\MetricParameter,\TheTreeVertex]$ on $\TreeBoundary$ is
  also independent of $\MetricParameter$. In fact, it coincides with the
  topology we already described above.

  Therefore, the space $\BoundedMaps{\TreeBoundary}$ of continuous
  (and automatically bounded) functions on $\TreeBoundary$ does not
  depend on $\MetricParameter$. However, which of those functions are
  Lipschitz continuous with respect to
  $\TreeDistance[\MetricParameter,\TheTreeVertex]$ does depend on
  $\MetricParameter$, although not on the vertex $\TheTreeVertex$.  We
  denote by $\LipMapsOn[\MetricParameter]{\TreeBoundary}$ the space of
  $\TreeDistance[\MetricParameter,\TheTreeVertex]$-Lipschitz continuous
  functions on $\TreeBoundary$. Note that for $\MetricParameter \leq
  \AltMetricParameter$, the metric
  $\TreeDistance[\MetricParameter,\TheTreeVertex]$ is bounded from above by
  $\TreeDistance[\AltMetricParameter,\TheTreeVertex]$. It follows that
  \(
    \LipMapsOn[\MetricParameter]{\TreeBoundary}
    \subspace
    \LipMapsOn[\AltMetricParameter]{\TreeBoundary}
  \).

  For $\FnEnds\in\LipMapsOn[\MetricParameter]{\TreeBoundary}$,
  let $\InfLipConstOf[\MetricParameter,\TheTreeVertex]{\FnEnds}$
  denote the optimal Lipschitz constant for $\FnEnds$. Note that
  $\InfLipConstOf[\MetricParameter,\TheTreeVertex]{\cdot}$ is a
  semi-norm on $\LipMapsOn[\MetricParameter]{\TreeBoundary}$. We denote
  by $\LipMapsOn[\MetricParameter,\TheTreeVertex]{\TreeBoundary}$ the
  space $\LipMapsOn[\MetricParameter]{\TreeBoundary}$ endowed with the
  norm
  \[
    \LipNormOf[\MetricParameter,\TheTreeVertex]{\FnEnds}
    :=
    \InfLipConstOf[\MetricParameter,\TheTreeVertex]{\FnEnds}
    +
    \NormOf[\infty]{\FnEnds}
  \]
  obtained by adding the sup-norm (recall that $\FnEnds$ is bounded).
  We use the notation $\LipMapsOn[\MetricParameter]{\TreeBoundary}$ to talk
  about the topological vector space without choosing a particular norm.

  For $\MetricParameter\leq\AltMetricParameter$, we observe
  \(
    \InfLipConstOf[\MetricParameter,\TheTreeVertex]{\FnEnds}
    \leq
    \InfLipConstOf[\AltMetricParameter,\TheTreeVertex]{\FnEnds}
  \)
  whence the inclusion
  $\LipMapsOn[\MetricParameter,\TheTreeVertex]{\TreeBoundary}$ into
  $\LipMapsOn[\AltMetricParameter,\TheTreeVertex]{\TreeBoundary}$ is
  continuous with operator norm~$1$.

  For a given end $\TheEnd$ and two vertices $\TheTreeVertex$ and
  $\AltTreeVertex$, the geodesic rays
  $\RayFromTo{\TheTreeVertex}{\TheEnd}$ and
  $\RayFromTo{\AltTreeVertex}{\TheEnd}$ share infinitely many
  vertices. The difference
  \[
    \HoroDistOf[\TheEnd]{\TheTreeVertex}{\AltTreeVertex}
    :=
    \VertexDistance{\TheTreeVertex}{\TreeBaseVertex}
    -
    \VertexDistance{\AltTreeVertex}{\TreeBaseVertex}
  \]
  of vertex distances in $\TheTree$ is independent of which vertex
  \(
    \TreeBaseVertex
  \)
  is chosen in the intersection
  \(
    (\RayFromTo{\TheTreeVertex}{\TheEnd})
    \intersect
    (\RayFromTo{\AltTreeVertex}{\TheEnd})
  \).
  For fixed $\TheTreeVertex$ and $\AltTreeVertex$, the function
  \[
    \TheEnd \longmapsto \HoroDistOf[\TheEnd]{\TheTreeVertex}{\AltTreeVertex}
  \]
  is a locally constant function on $\TreeBoundary$.

  \begin{figure}
    \begingroup
    \newcommand{\makeSubtree}[3]{%
      \IfThenElse{#1=0}{}{%
        \begingroup
        \pgfmathtruncatemacro{\Next}{#1-1}%
        \pgfmathsetmacro{\TheWidth}{#3}%
        \coordinate (#2D) at ($(#2)+(0,-1)$);
        \coordinate (#2L) at ($(#2)+(\TheWidth,-1)$);
        \coordinate (#2R) at ($(#2)+(-\TheWidth,-1)$);
        \draw [thin, gray] (#2) -- (#2D);
        \draw [thin, gray] (#2) -- (#2L);
        \draw [thin, gray] (#2) -- (#2R);
        \fill (#2D) circle (0.75pt);
        \fill (#2L) circle (0.75pt);
        \fill (#2R) circle (0.75pt);
        \makeSubtree{\Next}{#2D}{((#3)/3)}%
        \makeSubtree{\Next}{#2L}{((#3)/3)}%
        \makeSubtree{\Next}{#2R}{((#3)/3)}%
        \endgroup
      }%
    }%
    \begin{center}
      \begin{tikzpicture}
        \coordinate (O) at (0,1);
        \node at (0,0.5) [anchor=south] {$\TheEnd$};
        \clip (-6,0.5) rectangle (6,-4.5);
        \makeSubtree{5}{O}{5};
        \draw [very thick, black] (O) -- (OD) -- (ODL) -- (ODLR);
        \draw [very thick, black] (O) -- (OD) -- (ODL) -- (ODLL) -- (ODLLR);
        \draw [very thick, black] (O) -- (OD) -- (ODR) -- (ODRD) -- (ODRDL) -- (ODRDLD);
        \fill (ODRDLD) circle (2pt);
        \fill (ODLR) circle (2pt);
        \fill (ODLLR) circle (2pt);
        \node at (ODRDLD) [anchor=north] {$\TheTreeVertex$};
        \node at (ODLR) [anchor=south east] {$\ThrTreeVertex$};
        \node at (ODLLR) [anchor=east] {$\AltTreeVertex$};
      \end{tikzpicture}
    \end{center}
    \endgroup
    \caption{Horospheres as level sets: the geometric meaning of
      equation~\pref{cocycle-raw}}
  \end{figure}
  If $\TheTree$ is drawn so that the end $\TheEnd$ goes north and all
  other ends dangle down south, the vertices of $\TheTree$ naturally
  arrange in levels and
  $\HoroDistOf[\TheEnd]{\TheTreeVertex}{\AltTreeVertex}$ measures the
  oriented difference of levels for $\TheTreeVertex$ and
  $\AltTreeVertex$. From this interpretation, the following
  three-vertex-identity is obvious:
  \begin{equation}\label{cocycle-raw}
    \HoroDistOf[\TheEnd]{\TheTreeVertex}{\ThrTreeVertex}
    =
    \HoroDistOf[\TheEnd]{\TheTreeVertex}{\AltTreeVertex}
    +
    \HoroDistOf[\TheEnd]{\AltTreeVertex}{\ThrTreeVertex}
  \end{equation}
  Fixing a base vertex $\TreeBaseVertex$ will allow us to label the levels
  by integers and assign $0$ to the level of the base vertex.

  \begin{remark}
    In~\cite{BHW22}, the base point never changes. In that case,
    it is convenient to define the \notion{horocycle bracket}. The
    translation between the convention used in~\cite{BHW22} and this paper
    is given as follows:
    \begin{equation}\label{def-horocycle-braket}
      \HoroBraket{\TheTreeVertex}{\TheEnd}
      :=
      \HoroDistOf[\TheEnd]{\TreeBaseVertex}{\TheTreeVertex}
    \end{equation}
  \end{remark}

  We then define for any given end $\TheEnd$ and any given parameter
  $\TheTwist\in\ComplexNumbers$ the function
  \begin{align*}
    \TheEigenFct[\TheTwist,\TheEnd] \mapcolon \TreeVertexSet
    & \longrightarrow \ComplexNumbers \\
    \TheTreeVertex
    & \longmapsto \TheTwist^{\HoroDistOf[\TheEnd]{\TreeBaseVertex}{\TheTreeVertex}}
      =\TheTwist^{\HoroBraket{\TheTreeVertex}{\TheEnd}}
  \end{align*}
  which lies in the equalizer
  $\Equalizer{\TreeNeighborSum}{\TreeRescale[\TheTwist]}$. Moving the
  base vertex only changes $\TheEigenFct[\TheTwist,\TheEnd]$ up to a
  scalar multiple.  Truly new solutions to the equalizer equation can be
  obtained by linear combinations of the fundamental solutions for
  different ends. For fixed $\TheTreeVertex$, the function
  \(
    \TheEnd\mapsto\TheEigenFctOf[\TheTwist,\TheEnd]{\TheTreeVertex}
  \)
  is locally constant, whence the integral
  \[
    \TheEigenFct[\TheTwist,\TreeMeasure] \mapcolon
    \TheTreeVertex
    \longmapsto
    \Int[\TheEnd\in\TreeBoundary]{
      \TheEigenFctOf[\TheTwist,\TheEnd]{\TheTreeVertex}
      \diff\TreeMeasureOf{\TheEnd}
    }
  \]
  is well defined for any finitely additive measure $\TreeMeasure$ on
  $\TreeBoundary$, see \cite[Proposition~3.7]{BHW22}. The
  \notion{Poisson transform}
  \begin{align*}
    \PoissonTf[\TheTwist]
    \mapcolon
    \FAMeasuresOn{\TreeBoundary}
    & \longrightarrow
      \Equalizer{\TreeNeighborSum}{\Rescale[\TheTwist]}
      =
      \Equalizer{\TreeNeighborAvg}{\LocalTweak[\TheTwist]}
      \subseteq
      \MapsOn{\TreeVertexSet}
    \\
    \TreeMeasure
    & \longmapsto
      \TheEigenFct[\TheTwist,\TreeMeasure]
  \end{align*}
  defines a linear map from the space of complex-valued finitely
  additive measures on $\TreeBoundary$, which is also the algebraic dual
  of the space of locally constant functions on $\TreeBoundary$, to the
  equalizer $\Equalizer{\TreeNeighborAvg}{\TreeLocalTweak[\TheTwist]}$
  within $\MapsOn{\TreeVertexSet}$.

  Note that the Poisson transform depends on the base vertex
  $\TreeBaseVertex$, which we usually suppress in the notation. If we want
  to stress the dependency or are working with Poisson transforms based
  at different vertices, we will make the dependency explicit by writing
  $\PoissonTfAt[\TheTwist]{\TreeBaseVertex}$.

  Using the three-vertex-identity~\pref{cocycle-raw}, it is easy to
  relate Poisson transformations at different base vertices:
  \begin{align}\label{transform-transform}
    (\PoissonTfAtOf[\TheTwist]{\AltTreeVertex}{\TreeMeasure})(\TheTreeVertex)
    & =
      \Int[\TheEnd\in\TreeBoundary]{
      \TheTwist^{\HoroDistOf[\TheEnd]{\AltTreeVertex}{\TheTreeVertex}}
      \diff\TreeMeasureOf{\TheEnd}
      }
    \\
    & =
      \Int[\TheEnd\in\TreeBoundary]{
      \TheTwist^{\HoroDistOf[\TheEnd]{\TreeBaseVertex}{\TheTreeVertex}}
      \TheTwist^{\HoroDistOf[\TheEnd]{\AltTreeVertex}{\TreeBaseVertex}}
      \diff\TreeMeasureOf{\TheEnd}
      }
      =
      (
      \PoissonTfAtOf[\TheTwist]{\TreeBaseVertex}{
      \Cocycle[\TheTwist]{\AltTreeVertex}{\TreeBaseVertex}\TreeMeasure
      }
      )(\TheTreeVertex)\nonumber
  \end{align}
  where
  \(
    \Cocycle[\TheTwist]{\AltTreeVertex}{\TreeBaseVertex}
    \mapcolon\TreeBoundary\rightarrow\ComplexNumbers
  \)
  is the locally constant function
  \(
    \TheEnd \mapsto
    \TheTwist^{\HoroDistOf[\TheEnd]{\AltTreeVertex}{\TreeBaseVertex}}
    =
    \TheTwist^{-\HoroBraket{\AltTreeVertex}{\TheEnd}}    
  \).

  For $\TheTwist\not\in\SetOf{-1,0,1}$, the Poisson transform
  $\PoissonTf[\TheTwist]$ is a linear isomorphism whose inverse, the
  \notion{boundary value map}, can be described as follows.
  
  \begin{proposition}[{\cite[Theorem~4.7]{BHW22}}]
    Assume $\TheTwist\not\in\SetOf{-1,0,1}$.  For $\FnTreeVert\in
    \Equalizer{\TreeNeighborAvg}{\TreeLocalTweak[\TheTwist]}$ there is a
    unique finitely additive measure $\TreeMeasure$ such that
    $\FnTreeVert=\PoissonTfOf[\TheTwist]{\TreeMeasure}$. This measure has
    the property that on an oriented edge $\TreeEdge$ pointing away from
    the base vertex $\TreeBaseVertex$, one has:
    \begin{equation}\label{boundary}
      \TheTwist
      (\TreeGradient[\TheTwist]\FnTreeVert)(
      \OppositeOf{\TreeEdge}
      )
      =
      \TheTwist
      \FnTreeVertOf{\TerminalOf{\TreeEdge}}
      -
      \FnTreeVertOf{\InitialOf{\TreeEdge}}
      =
      (
      \TheTwist^2 - 1
      )
      \TheTwist^{\VertexDistance{\TreeBaseVertex}{\InitialOf{\TreeEdge}}}
      \TreeMeasureOf{
        \ForwardBoundaryOf{\TreeEdge}
      }
    \end{equation}
  \end{proposition}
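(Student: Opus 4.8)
\emph{Proof proposal.} The plan is to establish three things in turn: (1) that the boundary formula~\pref{boundary} is automatically satisfied by $\FnTreeVert=\PoissonTfOf[\TheTwist]{\TreeMeasure}$ for \emph{any} finitely additive measure $\TreeMeasure$; (2) that~\pref{boundary} then forces $\PoissonTf[\TheTwist]$ to be injective; and (3) that~\pref{boundary}, read backwards, produces a preimage for every $\FnTreeVert$ in the equalizer, so that $\PoissonTf[\TheTwist]$ is surjective. Uniqueness of $\TreeMeasure$ and the displayed description of the boundary value map then come for free. Throughout we use that $\TreeBoundary$ is identified with the island of rays issuing from $\TreeBaseVertex$, so a finitely additive measure is determined by its values on the forward sets $\ForwardBoundaryOf{\TreeEdge}$ of edges $\TreeEdge$ pointing away from $\TreeBaseVertex$ — these are precisely the districts of that island — subject to additivity over turn-extensions; and we use that $\PoissonTf[\TheTwist]$ lands in $\Equalizer{\TreeNeighborSum}{\TreeRescale[\TheTwist]}=\Equalizer{\TreeNeighborAvg}{\TreeLocalTweak[\TheTwist]}$, as already noted.

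\emph{Step 1 (the formula holds on the image).} Let $\FnTreeVert=\PoissonTfOf[\TheTwist]{\TreeMeasure}$, fix an edge $\TreeEdge$ with $\VertexDistance{\TreeBaseVertex}{\TerminalOf{\TreeEdge}}=\VertexDistance{\TreeBaseVertex}{\InitialOf{\TreeEdge}}+1$, and put $d:=\VertexDistance{\TreeBaseVertex}{\InitialOf{\TreeEdge}}$. By definition of the Poisson transform, $\TheTwist\FnTreeVertOf{\TerminalOf{\TreeEdge}}-\FnTreeVertOf{\InitialOf{\TreeEdge}}$ is the integral against $\TreeMeasure$ over $\TreeBoundary$ of $\TheEnd\mapsto\TheTwist\cdot\TheTwist^{\HoroDistOf[\TheEnd]{\TreeBaseVertex}{\TerminalOf{\TreeEdge}}}-\TheTwist^{\HoroDistOf[\TheEnd]{\TreeBaseVertex}{\InitialOf{\TreeEdge}}}$, and we split along $\TreeBoundary=\ForwardBoundaryOf{\TreeEdge}\union\ForwardBoundaryOf{\OppositeOf{\TreeEdge}}$. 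For $\TheEnd\in\ForwardBoundaryOf{\OppositeOf{\TreeEdge}}$ the ray $\RayFromTo{\TerminalOf{\TreeEdge}}{\TheEnd}$ begins with $\OppositeOf{\TreeEdge}$, hence $\HoroDistOf[\TheEnd]{\InitialOf{\TreeEdge}}{\TerminalOf{\TreeEdge}}=-1$ and, by the three-vertex identity~\pref{cocycle-raw}, the integrand vanishes identically there. For $\TheEnd\in\ForwardBoundaryOf{\TreeEdge}$ the ray $\RayFromTo{\TreeBaseVertex}{\TheEnd}$ runs through $\TreeEdge$ — this is where it matters that $\TreeEdge$ points away from $\TreeBaseVertex$ — so $\HoroDistOf[\TheEnd]{\TreeBaseVertex}{\InitialOf{\TreeEdge}}=d$ and $\HoroDistOf[\TheEnd]{\TreeBaseVertex}{\TerminalOf{\TreeEdge}}=d+1$, making the integrand the constant $\TheTwist^{d}(\TheTwist^{2}-1)$. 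Integrating gives
\[
  \TheTwist\FnTreeVertOf{\TerminalOf{\TreeEdge}}-\FnTreeVertOf{\InitialOf{\TreeEdge}}=(\TheTwist^{2}-1)\TheTwist^{d}\,\TreeMeasureOf{\ForwardBoundaryOf{\TreeEdge}},
\]
and since $\TheTwist\FnTreeVertOf{\TerminalOf{\TreeEdge}}-\FnTreeVertOf{\InitialOf{\TreeEdge}}=\TheTwist(\TreeGradient[\TheTwist]\FnTreeVert)(\OppositeOf{\TreeEdge})$ by definition of $\TreeGradient[\TheTwist]$, this is exactly~\pref{boundary}.

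\emph{Step 2 (injectivity).} If $\PoissonTfOf[\TheTwist]{\TreeMeasure}=0$, then Step~1 and $\TheTwist\not\in\SetOf{-1,0,1}$ (so $\TheTwist^{2}-1\neq 0$, $\TheTwist\neq 0$) give $\TreeMeasureOf{\ForwardBoundaryOf{\TreeEdge}}=0$ for every edge pointing away from $\TreeBaseVertex$, and since these forward sets are the districts of the island at $\TreeBaseVertex$, $\TreeMeasure=0$. \emph{Step 3 (surjectivity).} Given $\FnTreeVert\in\Equalizer{\TreeNeighborAvg}{\TreeLocalTweak[\TheTwist]}=\Equalizer{\TreeNeighborSum}{\TreeRescale[\TheTwist]}$, define $\TreeMeasure$ by solving~\pref{boundary} for its values, namely $\TreeMeasureOf{\ForwardBoundaryOf{\TreeEdge}}:=(\TheTwist\FnTreeVertOf{\TerminalOf{\TreeEdge}}-\FnTreeVertOf{\InitialOf{\TreeEdge}})/((\TheTwist^{2}-1)\TheTwist^{d})$ for each $\TreeEdge$ away from $\TreeBaseVertex$, together with $\TreeMeasureOf{\TreeBoundary}:=\sum_{\InitialOf{\TreeEdge}=\TreeBaseVertex}\TreeMeasureOf{\ForwardBoundaryOf{\TreeEdge}}$. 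The required additivity $\TreeMeasureOf{\ForwardBoundaryOf{\TreeEdge}}=\sum_{\TreeEdge\turn\tilde{\AltEdge}}\TreeMeasureOf{\ForwardBoundaryOf{\tilde{\AltEdge}}}$ over the turn-extensions $\tilde{\AltEdge}$ of $\TreeEdge$ becomes, after substituting $\sum_{\TreeEdge\turn\tilde{\AltEdge}}\FnTreeVertOf{\TerminalOf{\tilde{\AltEdge}}}=(\TreeNeighborSum\FnTreeVert)(\TerminalOf{\TreeEdge})-\FnTreeVertOf{\InitialOf{\TreeEdge}}$, exactly the equalizer equation $(\TreeNeighborSum\FnTreeVert)(\TerminalOf{\TreeEdge})=(\TheTwist+\ModDegreeOf{\TerminalOf{\TreeEdge}}\TheTwist^{-1})\FnTreeVertOf{\TerminalOf{\TreeEdge}}$ (a short computation using the equalizer equation and $\TheTwist\neq 0$). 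Hence $\TreeMeasure\in\FAMeasuresOn{\TreeBoundary}$. Now $\FnTreeVert':=\PoissonTfOf[\TheTwist]{\TreeMeasure}$ lies in the same equalizer and, by Step~1 applied to $\TreeMeasure$, satisfies $\TheTwist\FnTreeVert'(\TerminalOf{\TreeEdge})-\FnTreeVert'(\InitialOf{\TreeEdge})=\TheTwist\FnTreeVertOf{\TerminalOf{\TreeEdge}}-\FnTreeVertOf{\InitialOf{\TreeEdge}}$ for all $\TreeEdge$ away from $\TreeBaseVertex$; so $w:=\FnTreeVert-\FnTreeVert'$ is in the equalizer and obeys $\TheTwist\,w(\TerminalOf{\TreeEdge})=w(\InitialOf{\TreeEdge})$ along every such edge. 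Solving this recursion outward from $\TreeBaseVertex$ gives $w(\TheTreeVertex)=\TheTwist^{-\VertexDistance{\TreeBaseVertex}{\TheTreeVertex}}w(\TreeBaseVertex)$, and feeding this into the equalizer equation for $w$ at $\TreeBaseVertex$ yields $(1+\ModDegreeOf{\TreeBaseVertex})\TheTwist^{-1}w(\TreeBaseVertex)=(\TheTwist+\ModDegreeOf{\TreeBaseVertex}\TheTwist^{-1})w(\TreeBaseVertex)$, i.e. $(\TheTwist-\TheTwist^{-1})w(\TreeBaseVertex)=0$; since $\TheTwist^{2}\neq 1$ we get $w(\TreeBaseVertex)=0$, hence $w=0$ and $\PoissonTfOf[\TheTwist]{\TreeMeasure}=\FnTreeVert$.

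Putting the steps together, $\PoissonTf[\TheTwist]$ is a linear bijection whose inverse sends $\FnTreeVert$ to the measure $\TreeMeasure$ constructed in Step~3, Step~1 shows that this $\TreeMeasure$ satisfies~\pref{boundary}, and Step~2 gives uniqueness. I expect the only real obstacle to be Step~1 — keeping the horocycle bookkeeping straight on the two halves $\ForwardBoundaryOf{\TreeEdge}$ and $\ForwardBoundaryOf{\OppositeOf{\TreeEdge}}$, in particular spotting the pointwise cancellation of the integrand over $\ForwardBoundaryOf{\OppositeOf{\TreeEdge}}$, which hinges on $\TreeEdge$ being oriented away from the base vertex. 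Everything else is a bookkeeping exercise with~\pref{cocycle-raw} and with the equalizer equation, and runs parallel to the corresponding argument in \cite{BHW22}.
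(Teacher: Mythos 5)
The paper does not actually prove this proposition — it is stated as a review item with only the citation to \cite[Theorem~4.7]{BHW22} and no proof in the text. So there is no in-paper proof to compare against; your proposal has to be judged on its own.

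That said, your argument is correct and complete. Step~1 is the crux and you handle it carefully: splitting $\TreeBoundary=\ForwardBoundaryOf{\TreeEdge}\cup\ForwardBoundaryOf{\OppositeOf{\TreeEdge}}$, the three-vertex identity~\pref{cocycle-raw} with $\HoroDistOf[\TheEnd]{\InitialOf{\TreeEdge}}{\TerminalOf{\TreeEdge}}=-1$ on the back half kills the integrand pointwise, while on the front half it is the constant $\TheTwist^{d}(\TheTwist^{2}-1)$ because $\TreeEdge$ points away from $\TreeBaseVertex$. Your additivity check in Step~3 is exactly right: writing $\sum_{\TreeEdge\turn\tilde{\AltEdge}}\FnTreeVertOf{\TerminalOf{\tilde{\AltEdge}}}=(\TreeNeighborSum\FnTreeVert)(\TerminalOf{\TreeEdge})-\FnTreeVertOf{\InitialOf{\TreeEdge}}$ and invoking the equalizer equation collapses the numerator to $\TheTwist(\TheTwist\FnTreeVertOf{\TerminalOf{\TreeEdge}}-\FnTreeVertOf{\InitialOf{\TreeEdge}})$, matching the extra factor of $\TheTwist$ in the denominator. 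The closing argument that $w=\FnTreeVert-\PoissonTfOf[\TheTwist]{\TreeMeasure}$ lies in the equalizer, satisfies the outward recursion $w(\TheTreeVertex)=\TheTwist^{-\VertexDistance{\TreeBaseVertex}{\TheTreeVertex}}w(\TreeBaseVertex)$, and is killed by the equalizer equation at $\TreeBaseVertex$ via $(\TheTwist-\TheTwist^{-1})w(\TreeBaseVertex)=0$ is a clean way to obtain surjectivity without circularity. One small implicit ingredient you rely on and should perhaps flag explicitly: $\PoissonTf[\TheTwist]$ takes values in $\Equalizer{\TreeNeighborSum}{\TreeRescale[\TheTwist]}$, which the paper records in the display defining the Poisson transform just before the proposition; this is needed to conclude that $w$ is in the equalizer. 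With that reference in place, the argument is sound and presumably parallels the one in \cite{BHW22}.
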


  Any locally constant function on $\TreeBoundary$ belongs to
  $\LipMapsOn[\MetricParameter]{\TreeBoundary}$ for any parameter
  $\MetricParameter$. Consequently, any continuous linear functional
  on $\LipMapsOn[\MetricParameter]{\TreeBoundary}$ induces a unique finitely
  additive measure on $\TreeBoundary$. We therefore can regard the
  continuous dual $\ContDual{\LipMapsOn[\MetricParameter]{\TreeBoundary}}$
  as a space of finitely additive measures on $\TreeBoundary$; and for
  a measure $\TreeMeasure$, the condition
  \(
    \TreeMeasure\in\ContDual{\LipMapsOn[\MetricParameter]{\TreeBoundary}}
  \)
  says something about the regularity of $\TreeMeasure$. The higher
  $\MetricParameter$, the larger
  $\LipMapsOn[\MetricParameter]{\TreeBoundary}$ and the smaller
  $\ContDual{\LipMapsOn[\MetricParameter]{\TreeBoundary}}$. So: higher
  $\MetricParameter$ means better regularity.

  It turns out that there
  is a relation between the regularity of $\TreeMeasure$ and the growth of
  its Poisson transform
  \(
    \TheEigenFct[\TheTwist,\TreeMeasure]
    \mapcolon\TheTree\rightarrow\ComplexNumbers
  \).
  To make this precise, let
  \(
    \ShadowOf[\TreeBaseVertex]{\TheTreeVertex}
    :=
    \SetOf[\TheEnd]{
      \TheTreeVertex \in (\RayFromTo{\TreeBaseVertex}{\TheEnd})
    }
  \)
  denote the set of directions $\TheEnd$ along which $\TheTreeVertex$ is
  in the line of sight from $\TreeBaseVertex$.
  \begin{proposition}[{\cite[Lemma~5.11]{BHW22}}]
    For any measure $\TreeMeasure$ in
    $\ContDual{\LipMapsOn[\MetricParameter]{\TreeBoundary}}$ and
    $\TheBase>\frac{1}{\MetricParameter}$ there exists $\MultConst>0$ such
    that
    \begin{equation}\label{growth}
      \AbsValueOf{
        \TreeMeasureOf{
          \ShadowOf[\TreeBaseVertex]{\TheTreeVertex}
        }
      }
      \leq
      \MultConst\TheBase^{
        \VertexDistance{\TreeBaseVertex}{\TheTreeVertex}
      }
      \qquad\text{\ for all\ }
      \TheTreeVertex\in\TheTree
    \end{equation}
    Conversely, assume that condition~\pref{growth} holds for a finitely
    additive measure $\TreeMeasure$ and constants $\TheBase$ and
    $\MultConst$. Then for any $\MetricParameter \in (0,
    \frac{1}{\TheBase\MaxModDegree})$, we have
    $\TreeMeasure\in\ContDual{\LipMapsOn[\MetricParameter]{\TreeBoundary}}$,
    i.e., $\TreeMeasure$ extends to a continuous linear functional on
    $\LipMapsOn[\MetricParameter]{\TreeBoundary}$.
  \end{proposition}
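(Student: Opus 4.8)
The plan is to exploit the purely combinatorial nature of the shadows. Under the identification of $\TreeBoundary$ with the island of $\TreePathSet$ rooted at $\TreeBaseVertex$, the shadow $\ShadowOf[\TreeBaseVertex]{\TheTreeVertex}$ of a vertex $\TheTreeVertex$ at distance $\TheLevel:=\VertexDistance{\TreeBaseVertex}{\TheTreeVertex}$ is precisely the level-$\TheLevel$ district carved out by the geodesic segment from $\TreeBaseVertex$ to $\TheTreeVertex$; in particular it is clopen, its characteristic function is locally constant and hence lies in $\LipMapsOn[\MetricParameter]{\TreeBoundary}$, and $\TreeMeasureOf{\ShadowOf[\TreeBaseVertex]{\TheTreeVertex}}$ is the value of $\TreeMeasure$, viewed as a linear functional, on that characteristic function.

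For the first implication I would estimate, for $\TheLevel\geq 1$, the norm of this characteristic function in $\LipMapsOn[\MetricParameter,\TreeBaseVertex]{\TreeBoundary}$: its sup-norm is $1$, and since it takes distinct values only on pairs of ends lying in different level-$\TheLevel$ districts, which are at $\TreeDistance[\MetricParameter,\TreeBaseVertex]$-distance at least $\MetricParameter^{\TheLevel-1}$, its optimal Lipschitz constant is at most $\MetricParameter^{1-\TheLevel}$, so its $\LipNormOf[\MetricParameter,\TreeBaseVertex]$-norm is at most $2\MetricParameter^{1-\TheLevel}$. Pairing against $\TreeMeasure\in\ContDual{\LipMapsOn[\MetricParameter]{\TreeBoundary}}$ and using $\MetricParameter^{-1}<\TheBase$ gives $\AbsValueOf{\TreeMeasureOf{\ShadowOf[\TreeBaseVertex]{\TheTreeVertex}}}\leq 2\NormOf[\op]{\TreeMeasure}\MetricParameter^{1-\TheLevel}\leq 2\NormOf[\op]{\TreeMeasure}\TheBase^{\TheLevel}$, while the remaining case $\TheLevel=0$ (shadow $=\TreeBoundary$) just gives $\AbsValueOf{\TreeMeasureOf{\TreeBoundary}}\leq\NormOf[\op]{\TreeMeasure}$. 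Hence $\MultConst:=2\NormOf[\op]{\TreeMeasure}$ does the job; this direction is routine.

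For the converse I would fix $\MetricParameter\in(0,\frac{1}{\TheBase\MaxModDegree})$ and argue by approximation through locally constant functions, in the spirit of the ``Approximation by projection'' section and of \cite[Lemma~5.11]{BHW22}. Let $\TheProjection[\TheLevel]$ replace $f$ by the locally constant function taking on each level-$\TheLevel$ district the value of $f$ at a chosen end $\Prefer{\TheCode}$ contained in that district. Then $\PairingOf{\TheProjectionOf[\TheLevel]{f}}{\TreeMeasure}=\Sum[{\VertexDistance{\TreeBaseVertex}{\TheTreeVertex}=\TheLevel}]{f(\Prefer{\TheTreeVertex})\TreeMeasureOf{\ShadowOf[\TreeBaseVertex]{\TheTreeVertex}}}$, a finite sum because $\TheTree$ has bounded degree; expanding each $\TreeMeasureOf{\ShadowOf[\TreeBaseVertex]{\TheTreeVertex}}$ over the children of $\TheTreeVertex$ by finite additivity, the consecutive difference is
\[
  \PairingOf{\TheProjectionOf[\TheLevel+1]{f}-\TheProjectionOf[\TheLevel]{f}}{\TreeMeasure}
  =
  \Sum[{\VertexDistance{\TreeBaseVertex}{\TheTreeVertex}=\TheLevel}]{\Sum[{\AltTreeVertex\text{ child of }\TheTreeVertex}]{\bigl(f(\Prefer{\AltTreeVertex})-f(\Prefer{\TheTreeVertex})\bigr)\TreeMeasureOf{\ShadowOf[\TreeBaseVertex]{\AltTreeVertex}}}}.
\]
Here $\AbsValueOf{f(\Prefer{\AltTreeVertex})-f(\Prefer{\TheTreeVertex})}\leq\InfLipConstOf[\MetricParameter,\TreeBaseVertex]{f}\,\MetricParameter^{\TheLevel}$ because both chosen ends run through $\TheTreeVertex$, $\AbsValueOf{\TreeMeasureOf{\ShadowOf[\TreeBaseVertex]{\AltTreeVertex}}}\leq\MultConst\TheBase^{\TheLevel+1}$ by \pref{growth}, and there are at most $(\MaxModDegree+1)\MaxModDegree^{\TheLevel}$ vertices at distance $\TheLevel+1$ from $\TreeBaseVertex$; multiplying, the difference is bounded by $\MultConst\TheBase(\MaxModDegree+1)\InfLipConstOf[\MetricParameter,\TreeBaseVertex]{f}(\MaxModDegree\MetricParameter\TheBase)^{\TheLevel}$, and $\MaxModDegree\MetricParameter\TheBase<1$ makes the series over $\TheLevel$ converge geometrically. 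Thus $\PairingOf{f}{\TreeMeasure}:=\Lim[\TheLevel\rightarrow\infty]{\PairingOf{\TheProjectionOf[\TheLevel]{f}}{\TreeMeasure}}$ exists and is linear in $f$, it restricts to $\TreeMeasure$ on locally constant functions (where eventually $\TheProjectionOf[\TheLevel]{f}=f$), and summing the telescoping bound together with $\AbsValueOf{\PairingOf{\TheProjectionOf[0]{f}}{\TreeMeasure}}\leq\NormOf[\infty]{f}\AbsValueOf{\TreeMeasureOf{\TreeBoundary}}$ yields $\AbsValueOf{\PairingOf{f}{\TreeMeasure}}\leq C\LipNormOf[\MetricParameter,\TreeBaseVertex]{f}$ with $C$ depending only on $\MultConst,\TheBase,\MetricParameter,\MaxModDegree$ and $\AbsValueOf{\TreeMeasureOf{\TreeBoundary}}\leq\MultConst$, so $\TreeMeasure\in\ContDual{\LipMapsOn[\MetricParameter]{\TreeBoundary}}$. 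The one delicate point, and the real content of the statement, is precisely this bookkeeping in the converse: forcing the degree count $(\MaxModDegree+1)\MaxModDegree^{\TheLevel}$, the Lipschitz factor $\MetricParameter^{\TheLevel}$, and the growth rate $\TheBase^{\TheLevel+1}$ to combine into a convergent geometric series under exactly the hypothesis $\MetricParameter<\frac{1}{\TheBase\MaxModDegree}$; everything else is routine, and alternatively one may simply invoke \cite[Lemma~5.11]{BHW22} after matching the base-point conventions recorded around \pref{def-horocycle-braket}.
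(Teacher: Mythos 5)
The paper does not give its own proof of this proposition; it is stated as a cited result from \cite[Lemma~5.11]{BHW22} within a review section, so there is no in-paper argument to compare against. Your proof is correct and matches the methodology that the paper uses for the closely analogous statement a few pages later (the theorem showing that $\Equalizer{\DualAlgTf}{\TheTwist\Identity}$ lands inside $\ContDual{\LipMaps[\MetricParameter]}$), which the authors explicitly flag as parallel to the proof of the cited lemma: the forward direction by pairing against characteristic functions of shadows and estimating their Lipschitz norm, and the converse by projection onto locally constant approximants, telescoping over levels, and forcing the degree count $\MaxModDegree^{\TheLevel}$, the Lipschitz decay $\MetricParameter^{\TheLevel}$, and the growth $\TheBase^{\TheLevel}$ into a geometric series controlled by the hypothesis $\MetricParameter\TheBase\MaxModDegree<1$. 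The small bookkeeping details (minimum distance $\MetricParameter^{\TheLevel-1}$ between level-$\TheLevel$ districts, vertex count $(\MaxModDegree+1)\MaxModDegree^{\TheLevel}$ at distance $\TheLevel+1$, both preferred ends passing through the common ancestor so that $\TreeDistanceOf[\MetricParameter,\TreeBaseVertex]{\Prefer{\AltTreeVertex}}{\Prefer{\TheTreeVertex}}\leq\MetricParameter^{\TheLevel}$) all check out against the conventions set up in the paper, so the argument is a faithful reconstruction rather than a divergent route.
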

  As a consequence, we can deduce some regularity for the boundary values of
  bounded functions.
  \begin{corollary}\label{cor:Poisson trafo}
    Assume $\TheTwist\not\in\SetOf{-1,0,1}$.  The Poisson transform
    $\PoissonTf[\TheTwist]$ restricts to an injective linear operator on
    each $\ContDual{\LipMapsOn[\MetricParameter]{\TreeBoundary}}$.  For
    $\MetricParameter\in(0,\frac{\AbsValueOf{\TheTwist}}{\MaxModDegree})$
    the image
    \(
      \PoissonTfOf[\TheTwist]{
        \ContDual{\LipMapsOn[\MetricParameter]{\TreeBoundary}}
      }
    \)
    contains all bounded functions in the equalizer
    \(
      \Equalizer{\TreeNeighborSum}{\TreeRescale[\TheTwist]}
    \)
    or equivalently in
    \(
      \Equalizer{\TreeNeighborAvg}{\TreeLocalTweak[\TheTwist]}
    \).
  \end{corollary}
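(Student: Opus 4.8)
The plan is to derive both assertions from the two results of \cite{BHW22} recalled above: that for $\TheTwist\not\in\SetOf{-1,0,1}$ the Poisson transform $\PoissonTf[\TheTwist]$ is a linear isomorphism of $\FAMeasuresOn{\TreeBoundary}$ onto $\Equalizer{\TreeNeighborAvg}{\TreeLocalTweak[\TheTwist]}$ whose inverse is governed by \eqref{boundary}, and the regularity--growth dictionary \cite[Lemma~5.11]{BHW22}. Injectivity requires no work: $\PoissonTf[\TheTwist]$ is injective on all of $\FAMeasuresOn{\TreeBoundary}$, and $\ContDual{\LipMapsOn[\MetricParameter]{\TreeBoundary}}$ is, by the discussion preceding the statement, a space of finitely additive measures, so the restriction stays injective.

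For the statement about the image, fix $\MetricParameter\in(0,\frac{\AbsValueOf{\TheTwist}}{\MaxModDegree})$ and a bounded function $\FnTreeVert\in\Equalizer{\TreeNeighborSum}{\TreeRescale[\TheTwist]}=\Equalizer{\TreeNeighborAvg}{\TreeLocalTweak[\TheTwist]}$. Since $\PoissonTf[\TheTwist]$ is onto this equalizer, there is a unique $\TreeMeasure\in\FAMeasuresOn{\TreeBoundary}$ with $\FnTreeVert=\PoissonTfOf[\TheTwist]{\TreeMeasure}$, and it suffices to prove $\TreeMeasure\in\ContDual{\LipMapsOn[\MetricParameter]{\TreeBoundary}}$. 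By the converse direction of \cite[Lemma~5.11]{BHW22} this reduces to verifying the growth bound \eqref{growth} for $\TreeMeasure$ with $\TheBase=\AbsValueOf{\TheTwist}^{-1}$; indeed, $\MetricParameter<\frac{\AbsValueOf{\TheTwist}}{\MaxModDegree}=\frac{1}{\TheBase\MaxModDegree}$ then forces $\TreeMeasure\in\ContDual{\LipMapsOn[\MetricParameter]{\TreeBoundary}}$, whence $\FnTreeVert\in\PoissonTfOf[\TheTwist]{\ContDual{\LipMapsOn[\MetricParameter]{\TreeBoundary}}}$.

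To obtain the growth bound I would use \eqref{boundary} to read off the mass of $\TreeMeasure$ on a shadow. Given $\TheTreeVertex\neq\TreeBaseVertex$, let $\TreeEdge$ be the last edge of the geodesic $\RayFromTo{\TreeBaseVertex}{\TheTreeVertex}$; then $\TreeEdge$ points away from $\TreeBaseVertex$, $\TerminalOf{\TreeEdge}=\TheTreeVertex$, and $\VertexDistance{\TreeBaseVertex}{\InitialOf{\TreeEdge}}=\VertexDistance{\TreeBaseVertex}{\TheTreeVertex}-1$. One checks that the forward subset of $\TreeEdge$ is exactly the shadow $\ShadowOf[\TreeBaseVertex]{\TheTreeVertex}$: a ray issuing from $\InitialOf{\TreeEdge}$ that begins with $\TreeEdge$ prepends without backtracking to $\RayFromTo{\TreeBaseVertex}{\InitialOf{\TreeEdge}}$, so its end $\TheEnd$ satisfies $\TheTreeVertex\in\RayFromTo{\TreeBaseVertex}{\TheEnd}$, and conversely; hence $\ForwardBoundaryOf{\TreeEdge}=\ShadowOf[\TreeBaseVertex]{\TheTreeVertex}$. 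Now \eqref{boundary}, together with $\TheTwist\not\in\SetOf{-1,0,1}$ and the boundedness of $\FnTreeVert$, gives
\[
  \AbsValueOf{\TreeMeasureOf{\ShadowOf[\TreeBaseVertex]{\TheTreeVertex}}}
  =
  \frac{\AbsValueOf{\TheTwist\,\FnTreeVertOf{\TheTreeVertex}-\FnTreeVertOf{\InitialOf{\TreeEdge}}}}{\AbsValueOf{\TheTwist^2-1}\,\AbsValueOf{\TheTwist}^{\VertexDistance{\TreeBaseVertex}{\TheTreeVertex}-1}}
  \leq
  \frac{(\AbsValueOf{\TheTwist}+1)\AbsValueOf{\TheTwist}}{\AbsValueOf{\TheTwist^2-1}}\,\NormOf[\infty]{\FnTreeVert}\;\AbsValueOf{\TheTwist}^{-\VertexDistance{\TreeBaseVertex}{\TheTreeVertex}}.
\]
The case $\TheTreeVertex=\TreeBaseVertex$ is trivial, since $\ShadowOf[\TreeBaseVertex]{\TreeBaseVertex}=\TreeBoundary$ and $\TreeMeasureOf{\TreeBoundary}=(\PoissonTfOf[\TheTwist]{\TreeMeasure})(\TreeBaseVertex)=\FnTreeVertOf{\TreeBaseVertex}$ is bounded by $\NormOf[\infty]{\FnTreeVert}$. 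Enlarging the constant if necessary, we obtain exactly \eqref{growth} with $\TheBase=\AbsValueOf{\TheTwist}^{-1}$ and some $\MultConst>0$, which completes the argument.

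I do not expect a genuine obstacle here; the chain runs directly through \cite[Theorem~4.7]{BHW22}, \eqref{boundary} and \cite[Lemma~5.11]{BHW22}. The only step that calls for a little care is the geometric identification $\ForwardBoundaryOf{\TreeEdge}=\ShadowOf[\TreeBaseVertex]{\TheTreeVertex}$ and the attendant bookkeeping of the exponent of $\TheTwist$ (the shift by $1$ between $\InitialOf{\TreeEdge}$ and $\TheTreeVertex$).
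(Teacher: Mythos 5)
Your proposal is correct and follows the same route as the paper: injectivity from the restriction of the isomorphism of \cite[Theorem~4.7]{BHW22}, then the growth estimate~\pref{growth} with $\TheBase=\AbsValueOf{\TheTwist}^{-1}$ read off from~\pref{boundary} and fed into the converse direction of \cite[Lemma~5.11]{BHW22}. The paper merely asserts that the growth bound "is clear from formula~\pref{boundary}," whereas you spell out the identification $\ForwardBoundaryOf{\TreeEdge}=\ShadowOf[\TreeBaseVertex]{\TheTreeVertex}$ and the exponent bookkeeping explicitly; that is the same argument, just with the details filled in.
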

  \begin{proof}
    The restriction of an isomorphism is automatically injective.

    Let
    \(
      \FnTreeVert \mapcolon \TreeVertexSet \rightarrow \ComplexNumbers
    \)
    be a bounded element of the equalizer
    $\Equalizer{\TreeNeighborSum}{\TreeRescale[\TheTwist]}$ and let the
    finitely additive measure $\TreeMeasure$ be its corresponding boundary
    value.  It is clear from formula~\pref{boundary} that $\TreeMeasure$
    satisfies a condition of type~\pref{growth} whenever $\TheBase >
    \AbsValueOf{\TheTwist^{-1}}$, i.e., $\TreeMeasure$ extends to a
    continuous linear functional on
    $\LipMapsOn[\MetricParameter]{\TreeBoundary}$ provided
    \(
      0 < \MetricParameter < \frac{\AbsValueOf{\TheTwist}}{\MaxModDegree}
    \).
  \end{proof}
  
  \section{Upstairs and downstairs}\label{sec:up-and-down}
  We return to a finite graph $\TheGraph$. The universal covering
  space of $\TheGraph$ now is a tree, which we denote by $\TheTree$ and
  to which we apply the harmonic analysis from the previous section.

  We briefly recall the construction of the universal cover. Fix a
  base vertex $\BaseVertex$ ``\notion{downstairs}'' in $\TheGraph$. An
  edge path (possibly empty) starting at $\BaseVertex$ corresponds to a
  vertex in the universal cover $\TheTree$ and two such paths define the
  same vertex in $\TheTree$ if they are homotopic relative to end
  points. The covering map is projection to the end point.

  In a graph, there is a one-to-one correspondence of edge paths
  without backtracking and homotopy classes of edge paths. Therefore,
  the vertices in $\TheTree$ are in one-to-one correspondence with the
  edge paths without backtracking starting at $\BaseVertex$, i.e., with
  the postal codes for the island designated by $\BaseVertex$. This way,
  we see that $\TheTree$ is one of the trees in the forest of all postal
  codes. We choose the empty word, corresponding to the root of this
  tree, as the base vertex $\TreeBaseVertex$ ``\notion{upstairs}'' in
  $\TheTree$. It maps to $\BaseVertex$ under the covering projection.

  The geodesic rays in $\PathSet[\BaseVertex]$ correspond the rays
  in $\TheTree$ starting at $\TreeBaseVertex$. Thus, the sets
  $\PathSet[\BaseVertex]$, $\TreePathSet[\TreeBaseVertex]$ and
  $\TreeBoundary$ are in bijection. Moreover, the topologies we gave on
  those sets match. Even more, the $\MetricParameter$-metrics coincide
  for each $\MetricParameter$. In particular, a finitely additive
  measure $\TheMeasure$ on $\PathSet$ restricts to
  $\PathSet[\BaseVertex]$ and can then be reinterpreted as a finitely
  additive measure $\TreeMeasure$ on $\TreeBoundary$. We call the
  induced map
  \begin{align*}
    \Restriction\mapcolon
    \FinAddMeasuresOn{\PathSet}
    &\longrightarrow
      \FinAddMeasuresOn{\TreeBoundary}
    \\
    \TheMeasure & \longmapsto \TreeMeasure
  \end{align*}
  the \notion{restriction homomorphism}. It is clearly onto, but as
  $\TheGraph$ has more than a single vertex, the kernel of the
  restriction homomorphism is huge (consisting of a copy of
  $\FinAddMeasuresOn{\TreeBoundary}$ for each vertex in $\TheGraph$ apart
  from $\BaseVertex$).

  The fundamental group $\PiOne$ of $\TheGraph$ is a free group of
  finite rank. It acts freely on the universal covering $\TheTree$ by
  deck transformations and $\TheGraph$ can be regarded as the orbit
  space $\OrbitSpace{\PiOne}{\TheTree}$. There are induced actions on
  $\TreeVertexSet$, $\TreeEdgeSet$ and $\TreePathSet$. Moreover, there
  are induced representations on the associated function spaces, which
  we shall discuss here. In particular, we shall consider the way in
  which the Poisson transform is equivariant, which requires some
  attention because of its dependency on a chosen base vertex in
  $\TheTree$.

  Let $\TreeAutom \mapcolon \TheTree \rightarrow \TheTree$ be a
  tree automorphism $\TreeAutom \in \AutOf{\TheTree}$, i.e.,
  $\TreeAutom$ consists of two permutations, one of the vertex set
  $\TreeVertexSet$ and one of the set $\TreeEdgeSet$ of directed edges
  so that both end point maps
  $\Initial\mapcolon\TreeEdgeSet\rightarrow\TreeVertexSet$ and
  $\Terminal\mapcolon\TreeEdgeSet\rightarrow\TreeVertexSet$ commute with
  $\TreeAutom$. Then we have an obvious induced permutation on the set
  $\TreePathSet$ of geodesic rays in $\TheTree$ which is compatible with
  the confluence relation of paths. Hence, we also see an induced
  permutation on the boundary $\TreeBoundary$. We call the induced
  actions of the automorphism group $\AutOf{\TheTree}$ on the sets
  $\TreeVertexSet$, $\TreeEdgeSet$, $\TreePathSet$ and $\TreeBoundary$
  the \notion{regular permutation actions}. All our group actions are
  from the left, and we shall keep it that way by occasionally paying
  the price of inserting an inverse here or there, like so: for a
  function $\FnEnds\mapcolon\TreeBoundary\rightarrow\ComplexNumbers$, we
  define
  $\TreeAutom\FnEnds\mapcolon\TreeBoundary\rightarrow\ComplexNumbers$ by
  \[
    (\TreeAutom\FnEnds)(\TheEnd)
    :=
    \FnEndsOf{\InvAutom\TheEnd}
  \]
  and since $\ExplicitMapsFromTo{\DummyArg}{\ComplexNumbers}$ is a
  contravariant functor, this defines an action of $\AutOf{\TheTree}$ on
  \(
    \ExplicitMapsFromTo{\TreeBoundary}{\ComplexNumbers}
  \)
  . In the same way we obtain an action of $\AutOf{\TheTree}$ on
  $\FAMeasuresOn{\TreeBoundary}$ by setting
  \(
    (\TreeAutom\TheMeasure)(\ForwardBoundaryOf{\TreeEdge})
    :=
    \TheMeasure(\ForwardBoundaryOf{\InvAutom\TreeEdge})
    .
  \)
  We proceed analogously for functions on $\TreeVertexSet$,
  $\TreeEdgeSet$ and $\TreePathSet$, turning all regular permutation
  actions into \notion{regular representations} of
  $\AutOf{\TheTree}$ on various function spaces including the space of
  finitely additive measures on $\TreeBoundary$.

  All these actions and representations restrict to the fundamental
  group $\PiOne$ that we regard as the group of deck transformations and
  thereby as a subgroup of $\AutOf{\TheTree}$. The covering projection
  \[
    \TheTree\longrightarrow\TheGraph
  \]
  allows us to lift functions defined on $\TheGraph$ to
  $\PiOne$-periodic functions on $\TheTree$. Conversely, every
  $\PiOne$-periodic function on $\TheTree$ induces a map on the orbit
  space $\TheGraph=\OrbitSpace{\PiOne}{\TheTree}$. Hence, we have a pull
  back isomorphism
  \[
    \ExplicitMapsFromTo{\VertexSet}{\ComplexNumbers}
    \isomorphic
    \Invariants[\PiOne]{
      \ExplicitMapsFromTo{\TreeVertexSet}{\ComplexNumbers}
    }
  \]
  and its companion isomorphisms on the other function
  spaces. Here, we denote by
  \(
    \Invariants[\PiOne]{
      \ExplicitMapsFromTo{\TreeVertexSet}{\ComplexNumbers}
    }
  \)
  the subspace of $\PiOne$-periodic functions.

  \begin{observation}\label{equiv-isom}
    The projection $\TheTree\rightarrow\TheGraph$ is a covering
    projection. The operators $\TurnSum$, $\Tf$, $\NeighborSum$,
    $\NeighborAvg$, $\Gradient[\TheTwist]$, $\Rescale[\TheTwist]$ and
    $\LocalTweak[\TheTwist]$ on $\TheGraph$ have purely local definitions
    which therefore lift to their counterparts $\TreeTurnSum$, $\TreeTf$,
    $\TreeNeighborSum$, $\TreeNeighborAvg$, $\TreeGradient[\TheTwist]$,
    $\TreeRescale[\TheTwist]$ and $\TreeLocalTweak[\TheTwist]$ on
    $\TheTree$.  Hence the canonical pull back isomorphisms restrict to
    isomorphisms
    \[
      \begin{tikzcd}
        &
        \Equalizer{
          \TurnSum
        }{
          \TheTwist\Identity
        }
        \ar[r, equal]
        \ar[d, equal, "{\Gradient[\TheTwist]}"']
        &
        \Invariants[\PiOne]{
          \Equalizer{
            \TreeTurnSum
          }{
            \TheTwist\Identity
          }
        }
        \ar[d, equal, "{\TreeGradient[\TheTwist]}"]
        &
        \\
        \Equalizer{
          \NeighborAvg
        }{
          \LocalTweak[\TheTwist]
        }
        \ar[r, equal]
        &
        \Equalizer{
          \NeighborSum
        }{
          \Rescale[\TheTwist]
        }
        \ar[r, equal]
        &
        \Invariants[\PiOne]{
          \Equalizer{
            \TreeNeighborSum
          }{
            \TreeRescale[\TheTwist]
          }
        }
        \ar[r, equal]
        &
        \Invariants[\PiOne]{
          \Equalizer{
            \TreeNeighborAvg
          }{
            \TreeLocalTweak[\TheTwist]
          }
        }
      \end{tikzcd}
    \]
    of equalizers.

    The twisted gradient $\TreeGradient[\TheTwist]$ on $\TheTree$ is
    an isomorphism of the right hand sides (provided
    $\TheTwist\not\in\SetOf{-1,0,1}$), just as the twisted gradient
    $\Gradient[\TheTwist]$ is an isomorphism for the left hand sides.
  \end{observation}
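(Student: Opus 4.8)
The plan is to reduce everything to one elementary fact: the covering projection $p\colon\TheTree\to\TheGraph$ is a local isomorphism, so that any operator built purely from the local incidence data — the maps $\Initial$, $\Terminal$, $\Opposite$, the turn relation, and the degree function — automatically commutes with pull-back of functions. First I would record that for every vertex $\TheTreeVertex$ of $\TheTree$ the projection restricts to a bijection between the oriented edges issuing from $\TheTreeVertex$ and those issuing from $p(\TheTreeVertex)$, that this bijection respects turns and opposites, and that $\ModDegreeOf{\TheTreeVertex}=\ModDegreeOf{p(\TheTreeVertex)}$. A one-line computation then shows that if $\FnVert$ on $\VertexSet$ pulls back to the $\PiOne$-periodic function $\FnTreeVert$ on $\TreeVertexSet$, then $\TreeNeighborSum\FnTreeVert$ is again $\PiOne$-periodic and descends to $\NeighborSum\FnVert$; the identical bookkeeping covers $\TurnSum$, $\Tf$, $\NeighborAvg$, the multiplication operators $\Rescale[\TheTwist]$ and $\LocalTweak[\TheTwist]$ (whose multipliers depend only on the local degree), the twisted gradient $\Gradient[\TheTwist]$, and the incoming-edge sum $\InSum$. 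Hence each of these operators intertwines with its tilde-counterpart under the canonical pull-back isomorphisms.

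Next I would observe that an intertwining operator carries equalizers to equalizers: $\TheLinOp\FnVert=\AltLinOp\FnVert$ holds downstairs exactly when $\tilde{\TheLinOp}\FnTreeVert=\tilde{\AltLinOp}\FnTreeVert$ holds for the pull-back, and pull-backs are precisely the $\PiOne$-periodic functions. This yields the isomorphism $\Equalizer{\TurnSum}{\TheTwist\Identity}\isomorphic\Invariants[\PiOne]{\Equalizer{\TreeTurnSum}{\TheTwist\Identity}}$ in the top row, and likewise $\Equalizer{\NeighborSum}{\Rescale[\TheTwist]}\isomorphic\Invariants[\PiOne]{\Equalizer{\TreeNeighborSum}{\TreeRescale[\TheTwist]}}$ in the bottom row; the two ``$\NeighborAvg$ versus $\NeighborSum$'' equalities hold verbatim on both $\TheGraph$ and $\TheTree$ because $\LocalTweak[\TheTwist]$ and $\Rescale[\TheTwist]$ differ only by the invertible multiplier $\TheVertex\mapsto 1+\ModDegreeOf{\TheVertex}$, exactly as noted before Proposition~\ref{translated-alg-obs}. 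This accounts for all four horizontal arrows of the diagram.

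For the vertical arrows I would simply apply Proposition~\ref{algebraic} — which carries no finiteness hypothesis — to the tree $\TheTree$, itself a connected locally finite graph with all degrees at least $2$: it gives that for $\TheTwist\not\in\SetOf{-1,0,1}$ the twisted gradient $\TreeGradient[\TheTwist]$ is an isomorphism from $\Equalizer{\TreeNeighborSum}{\TreeRescale[\TheTwist]}$ onto $\Equalizer{\TreeTurnSum}{\TheTwist\Identity}$, with inverse $(\TheTwist-\TheTwist^{-1})^{-1}\InSum$ read on $\TheTree$. Since $\TreeGradient[\TheTwist]$ and $\InSum$ are local, they commute with the entire regular $\AutOf{\TheTree}$-representation, in particular with the $\PiOne$-action, and therefore restrict to mutually inverse maps on the $\PiOne$-periodic subspaces; this is the assertion that $\TreeGradient[\TheTwist]$ is an isomorphism of the right-hand column, while the corresponding statement for $\Gradient[\TheTwist]$ on the left is Proposition~\ref{algebraic} on $\TheGraph$ itself. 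Commutativity of the square is precisely the intertwining of $\Gradient[\TheTwist]$ with $\TreeGradient[\TheTwist]$ under pull-back established in the first step.

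I expect no genuine obstacle: the only point requiring care is to make the phrase ``has a purely local definition'' precise, i.e.\ to isolate once and for all that any operator assembled from $\Initial$, $\Terminal$, $\Opposite$, the turn relation and the degree function commutes with every covering projection and hence with the deck-transformation action. Once this is in place, both the commuting diagram and the isomorphism claim are formal, with the only nontrivial input being Proposition~\ref{algebraic} applied to the tree.
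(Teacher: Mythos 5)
Your proposal is correct and follows exactly the reasoning the paper intends: the Observation is presented without a formal proof, the justification being precisely the phrase ``purely local definitions which therefore lift,'' and you have simply made this precise (locality of the incidence data under the covering, intertwining with pull-back, equalizers carried to equalizers, and Proposition~\ref{algebraic} applied both downstairs and upstairs to the locally finite tree, plus the remark that locality of $\TreeGradient[\TheTwist]$ and $\InSum$ gives $\PiOne$-equivariance and hence restriction to the invariant subspaces). No genuine divergence from the paper's approach.
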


  Since the automorphism $\TreeAutom$ can move the base vertex
  $\TreeBaseVertex$ and the Poisson transform is defined using
  $\TreeBaseVertex$, we cannot expect the Poisson transform to be
  $\AutOf{\TheTree}$-equivariant. Instead, we have the following
  \notion{intertwining} identity:
  \begin{equation}\label{intertwining}
    \PoissonTfOf[\TheTwist]{
      \Cocycle[\TheTwist]{\TreeAutom\TreeBaseVertex}{\TreeBaseVertex}
      \TreeAutom\TreeMeasure
    }
    =
    \TreeAutom
    \PoissonTfOf[\TheTwist]{\TreeMeasure}
  \end{equation}
  To see this, we note that if we allow to move the base vertex, we
  are in a much simpler situation. The Poisson transform is defined in
  purely metric terms, so isometric pictures lead to identical values,
  whence
  \(
    \PoissonTfAtOf[\TheTwist]
    {\TreeAutom\TreeBaseVertex}{\TreeAutom\TreeMeasure}
    (\TreeAutom\TheTreeVertex)
    =
    \PoissonTfAtOf[\TheTwist]{\TreeBaseVertex}{\TreeMeasure}
    (\TheTreeVertex)
  \).
  Using~\pref{transform-transform}, we obtain:
  \[
    \PoissonTfAtOf[\TheTwist]{\TreeBaseVertex}{
      \Cocycle[\TheTwist]{\TreeAutom\TreeBaseVertex}{\TreeBaseVertex}
      \TreeAutom\TreeMeasure
    }(\TheTreeVertex)
    =
    \PoissonTfAtOf[\TheTwist]{\TreeBaseVertex}{\TreeMeasure}
    (\InvAutom\TheTreeVertex)
    =
    (\TreeAutom\PoissonTfAtOf[\TheTwist]
    {\TreeBaseVertex}{\TreeMeasure})(\TheTreeVertex)
  \]

  So, we define the linear representation
  \(
    \TwistedAction$ of $\AutOf{\TheTree}
  \)
  on the space of finitely additive measures $\FinAddMeasuresOn{\TreeBoundary}$:
  \[
    \TwistedActionOf[\TreeAutom]{\TreeMeasure}
    :=
    \Cocycle[\TheTwist]{\TreeAutom\TreeBaseVertex}{\TreeBaseVertex}
    \TreeAutom\TreeMeasure
  \]
  
  \begin{observation}\label{twisted-isom}
    For $\TheTwist\not\in\SetOf{-1,0,1}$ the Poisson transform
    $\PoissonTf[\TheTwist]$ defines a linear isomorphism from the space
    $\FinAddMeasuresOn{\TreeBoundary}$ to the equalizer
    $\Equalizer{\TreeNeighborSum}{\TreeRescale[\TheTwist]}$. This
    isomorphism intertwines the representation $\TwistedAction$ on
    $\FinAddMeasuresOn{\TreeBoundary}$ and the regular representation
    restricted to
    $\Equalizer{\TreeNeighborSum}{\TreeRescale[\TheTwist]}$.

    In particular, the subspaces of $\PiOne$-invariants on both sides are
    isomorphic under the Poisson transform.
  \end{observation}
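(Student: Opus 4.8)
The plan is to read everything off the two facts already in hand: the statement preceding \cite[Theorem~4.7]{BHW22} that for $\TheTwist\not\in\SetOf{-1,0,1}$ the Poisson transform $\PoissonTf[\TheTwist]$ is a \emph{linear bijection} from $\FinAddMeasuresOn{\TreeBoundary}$ onto $\Equalizer{\TreeNeighborAvg}{\TreeLocalTweak[\TheTwist]}=\Equalizer{\TreeNeighborSum}{\TreeRescale[\TheTwist]}$, and the intertwining identity~\eqref{intertwining}. So the content of the observation that still requires an argument is purely the equivariance; the analytic heavy lifting is done elsewhere.

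First I would record that the target $\Equalizer{\TreeNeighborSum}{\TreeRescale[\TheTwist]}\subseteq\MapsOn{\TreeVertexSet}$ is a subrepresentation of the regular representation of $\AutOf{\TheTree}$: both $\TreeNeighborSum$ (summation over neighbours) and $\TreeRescale[\TheTwist]$ (multiplication by a function of the local vertex degree) are defined by purely local data, hence commute with every $\TreeAutom\in\AutOf{\TheTree}$, as in Observation~\ref{equiv-isom}; therefore they preserve the equalizer and the phrase ``the regular representation restricted to $\Equalizer{\TreeNeighborSum}{\TreeRescale[\TheTwist]}$'' is meaningful. Next, by the very definition $\TwistedActionOf[\TreeAutom]{\TreeMeasure}:=\Cocycle[\TheTwist]{\TreeAutom\TreeBaseVertex}{\TreeBaseVertex}\TreeAutom\TreeMeasure$, the identity~\eqref{intertwining} reads exactly $\PoissonTf[\TheTwist]\circ\TwistedAction[\TreeAutom]=\TreeAutom\circ\PoissonTf[\TheTwist]$ for every $\TreeAutom$. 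Since $\PoissonTf[\TheTwist]$ is a bijection this rearranges to $\TwistedAction[\TreeAutom]=\PoissonTf[\TheTwist]^{-1}\circ\TreeAutom\circ\PoissonTf[\TheTwist]$, so $\TwistedAction$ is the conjugate of a genuine representation by a fixed linear isomorphism and is therefore itself a representation of $\AutOf{\TheTree}$ (no separate verification of the cocycle relation for $\Cocycle[\TheTwist]{\DummyArg}{\DummyArg}$ via~\eqref{cocycle-raw} is needed, though it could also be done that way), and $\PoissonTf[\TheTwist]$ is by construction an isomorphism of representations.

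The ``in particular'' is then the standard fact that an equivariant isomorphism restricts to an isomorphism of spaces of fixed vectors: if $\TreeMeasure$ is $\TwistedAction$-fixed then $\PoissonTfOf[\TheTwist]{\TreeMeasure}$ is fixed by the regular representation, and conversely via $\PoissonTf[\TheTwist]^{-1}$; specializing to the subgroup $\PiOne\subseteq\AutOf{\TheTree}$ of deck transformations yields $\TwistedInvariants[\PiOne]{\FinAddMeasuresOn{\TreeBoundary}}\isomorphic\Invariants[\PiOne]{\Equalizer{\TreeNeighborSum}{\TreeRescale[\TheTwist]}}$. I do not expect a genuine obstacle here: all the work is already packaged in \cite[Theorem~4.7]{BHW22} and in~\eqref{intertwining}, and what remains is bookkeeping; the only point deserving a line of care is confirming that $\Equalizer{\TreeNeighborSum}{\TreeRescale[\TheTwist]}$ is stable under the regular action, so that both sides of the asserted intertwining live in the right categories.
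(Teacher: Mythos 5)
Your proposal is correct and follows essentially the same route as the paper: the bijectivity is quoted from \cite[Theorem~4.7]{BHW22}, the equivariance is exactly the intertwining identity~\pref{intertwining} rewritten via the definition of $\TwistedAction$, and the statement about invariants is the standard restriction of an equivariant isomorphism to fixed vectors. Your remark that $\TwistedAction$ is automatically a representation by conjugation through $\PoissonTf[\TheTwist]$ matches the paper's own comment immediately after the observation, and your care about the equalizer being stable under the regular action is a reasonable (and correct) bookkeeping point that the paper leaves implicit.
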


  Whenever the Poisson transform is an isomorphism, it can be used to
  transpose any linear representation of $\PiOne$ on either side to the
  other. Hence, we know a priori that $\TwistedAction$ will be a linear
  representation of $\PiOne$ on $\FinAddMeasuresOn{\TreeBoundary}$. This
  provides a roundabout way to deduce the following \notion{cocycle
    property}:
  \begin{equation}\label{cocycle-identity}
    \Cocycle[\TheTwist]{
      \TreeAutom[1]\TreeAutom[2]\TreeBaseVertex
    }{\TreeBaseVertex}(\TheEnd)
    =
    \Cocycle[\TheTwist]{
      \TreeAutom[2]\TreeBaseVertex
    }{\TreeBaseVertex}(\TreeAutom[1]\TheEnd)
    \Cocycle[\TheTwist]{
      \TreeAutom[1]\TreeBaseVertex
    }{\TreeBaseVertex}(\TheEnd)
  \end{equation}
  A direct proof uses the three-vertex-identity~\pref{cocycle-raw}
  to first deduce the \notion{horocycle identity}
  \begin{equation}\label{horocycle-identity}
    \HoroBraket{\TreeAutom\TheTreeVertex}{\TreeAutom\TheEnd}
    =
    \HoroBraket{\TheTreeVertex}{\TheEnd}
    +\HoroBraket{\TreeAutom\TreeBaseVertex}{\TreeAutom\TheEnd}
  \end{equation}
  from
  \begin{align*}
    \HoroDistOf[\TreeAutom\TheEnd]{\TreeBaseVertex}{\TreeAutom\TheTreeVertex}
    &=\HoroDistOf[\TheEnd]{\TreeAutom[][-1]\TreeBaseVertex}{\TheTreeVertex}
      =\HoroDistOf[\TheEnd]{\TreeBaseVertex}{\TheTreeVertex}
      +
      \HoroDistOf[\TheEnd]{\TreeAutom[][-1]\TreeBaseVertex}{\TreeBaseVertex}
      =\HoroDistOf[\TheEnd]{\TreeBaseVertex}{\TheTreeVertex}
      +
      \HoroDistOf[\TreeAutom\TheEnd]{\TreeBaseVertex}{\TreeAutom\TreeBaseVertex}
  \end{align*}
  and then establish~\pref{cocycle-identity} by direct computation.

  Taken together, the Observations~\ref{equiv-isom}
  and~\ref{twisted-isom} imply that the Poisson transform composes with
  the pull back to yield an isomorphism of the equalizer
  $\Equalizer{\NeighborSum}{\Rescale[\TheTwist]}$ to the space of those
  finitely additive measures
  $\TreeMeasure\in\FinAddMeasuresOn{\TreeBoundary}$ that satisfy the
  invariance condition
  \(
    \TwistedActionOf[\TreeAutom]{\TreeMeasure}
    = \TreeMeasure
  \)
  for all $\TreeAutom\in\PiOne$. As this is the $\PiOne$-invariant
  subspace of $\FinAddMeasuresOn{\TreeBoundary}$ with respect to the
  representation $\TwistedAction$, we shall denote it by
  $\TwistedInvariants[\PiOne]{\FinAddMeasuresOn{\TreeBoundary}}$.

  \begin{lemma}\label{inv-equiv}
    For $\TreeMeasure\in \FinAddMeasuresOn{\TreeBoundary}$ and
    $\TreeAutom\in\AutOf{\TheTree}$, we have
    \(
      \TwistedActionOf[\TreeAutom]{\TreeMeasure}
      =
      \TreeMeasure
    \)
    if and only if
    \[
      \TheTwist^{\VertexDistance{\TreeBaseVertex}{\InitialOf{\TreeEdge}}}
      \TreeMeasureOf{\ForwardBoundaryOf{\TreeEdge}}
      =
      \TheTwist^{\VertexDistance{\TreeBaseVertex}{\InitialOf{\TreeAutom\TreeEdge}}}
      \TreeMeasureOf{\ForwardBoundaryOf{\TreeAutom\TreeEdge}}
    \]
    for each edge $\TreeEdge\in\TreeEdgeSet$ pointing away from
    $\TreeBaseVertex$ and away from $\InvAutom\TreeBaseVertex$.
  \end{lemma}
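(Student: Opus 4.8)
The plan is to evaluate both finitely additive measures $\TwistedActionOf[\TreeAutom]{\TreeMeasure}$ and $\TreeMeasure$ on forward sets and to reduce the equality of the two measures to forward sets $\ForwardBoundaryOf{\AltEdge}$ of edges $\AltEdge$ that point away from both $\TreeBaseVertex$ and $\TreeAutom\TreeBaseVertex$. The first observation I would establish is that for such an edge $\AltEdge$ the cocycle function $\Cocycle[\TheTwist]{\TreeAutom\TreeBaseVertex}{\TreeBaseVertex}\mapcolon\TheEnd\mapsto\TheTwist^{\HoroDistOf[\TheEnd]{\TreeAutom\TreeBaseVertex}{\TreeBaseVertex}}$ is \emph{constant} on $\ForwardBoundaryOf{\AltEdge}$: since $\AltEdge$ points away from $\TreeBaseVertex$ and from $\TreeAutom\TreeBaseVertex$, the vertex $\InitialOf{\AltEdge}$ lies on both rays $\RayFromTo{\TreeBaseVertex}{\TheEnd}$ and $\RayFromTo{\TreeAutom\TreeBaseVertex}{\TheEnd}$ for every $\TheEnd\in\ForwardBoundaryOf{\AltEdge}$, so it can serve as the common vertex in the definition of $\HoroDistOf[\TheEnd]{\TreeAutom\TreeBaseVertex}{\TreeBaseVertex}$, which is therefore the $\TheEnd$-independent integer $\VertexDistance{\TreeAutom\TreeBaseVertex}{\InitialOf{\AltEdge}}-\VertexDistance{\TreeBaseVertex}{\InitialOf{\AltEdge}}$.

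With this in hand, and using the definitions $\TwistedActionOf[\TreeAutom]{\TreeMeasure}=\Cocycle[\TheTwist]{\TreeAutom\TreeBaseVertex}{\TreeBaseVertex}\,\TreeAutom\TreeMeasure$ together with $(\TreeAutom\TreeMeasure)(\ForwardBoundaryOf{\AltEdge})=\TreeMeasureOf{\ForwardBoundaryOf{\InvAutom\AltEdge}}$, I would compute
\[
  \TwistedActionOf[\TreeAutom]{\TreeMeasure}(\ForwardBoundaryOf{\AltEdge})
  =
  \TheTwist^{\,\VertexDistance{\TreeAutom\TreeBaseVertex}{\InitialOf{\AltEdge}}-\VertexDistance{\TreeBaseVertex}{\InitialOf{\AltEdge}}}\,
  \TreeMeasureOf{\ForwardBoundaryOf{\InvAutom\AltEdge}}.
\]
Putting $\AltEdge=\TreeAutom\TreeEdge$ and using that $\TreeAutom$ is an isometry commuting with the initial-vertex map $\Initial$, so $\VertexDistance{\TreeAutom\TreeBaseVertex}{\InitialOf{\TreeAutom\TreeEdge}}=\VertexDistance{\TreeBaseVertex}{\InitialOf{\TreeEdge}}$, this turns into
\[
  \TwistedActionOf[\TreeAutom]{\TreeMeasure}(\ForwardBoundaryOf{\TreeAutom\TreeEdge})
  =
  \TheTwist^{\,\VertexDistance{\TreeBaseVertex}{\InitialOf{\TreeEdge}}-\VertexDistance{\TreeBaseVertex}{\InitialOf{\TreeAutom\TreeEdge}}}\,
  \TreeMeasureOf{\ForwardBoundaryOf{\TreeEdge}},
\]
so that $\TwistedActionOf[\TreeAutom]{\TreeMeasure}(\ForwardBoundaryOf{\TreeAutom\TreeEdge})=\TreeMeasureOf{\ForwardBoundaryOf{\TreeAutom\TreeEdge}}$ is equivalent to $\TheTwist^{\VertexDistance{\TreeBaseVertex}{\InitialOf{\TreeEdge}}}\TreeMeasureOf{\ForwardBoundaryOf{\TreeEdge}}=\TheTwist^{\VertexDistance{\TreeBaseVertex}{\InitialOf{\TreeAutom\TreeEdge}}}\TreeMeasureOf{\ForwardBoundaryOf{\TreeAutom\TreeEdge}}$. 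An edge $\TreeEdge$ pointing away from $\TreeBaseVertex$ and from $\InvAutom\TreeBaseVertex$ is precisely one for which $\AltEdge=\TreeAutom\TreeEdge$ points away from both $\TreeBaseVertex$ and $\TreeAutom\TreeBaseVertex$, so this is exactly the asserted condition, and the direction ``$\Rightarrow$'' follows at once.

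For ``$\Leftarrow$'' the condition says that $\TwistedActionOf[\TreeAutom]{\TreeMeasure}$ and $\TreeMeasure$ agree on every $\ForwardBoundaryOf{\AltEdge}$ with $\AltEdge$ pointing away from both $\TreeBaseVertex$ and $\TreeAutom\TreeBaseVertex$, and I would conclude by noting that such forward sets determine a finitely additive measure on $\TreeBoundary$: any forward set $\ForwardBoundaryOf{\AltEdge}$ is a finite disjoint union of forward sets $\ForwardBoundaryOf{\AltEdge[\TheIndex]}$ with the $\AltEdge[\TheIndex]$ taken so far downstream of $\AltEdge$ that neither $\TreeBaseVertex$ nor $\TreeAutom\TreeBaseVertex$ lies in the forward half-tree of $\AltEdge[\TheIndex]$ --- possible because $\TreeBaseVertex$ and $\TreeAutom\TreeBaseVertex$ are just two fixed vertices --- whence each $\AltEdge[\TheIndex]$ points away from both, and finite additivity propagates the equality to all forward sets and hence to all clopen subsets of the totally disconnected compact space $\TreeBoundary$. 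The bulk of the work is routine bookkeeping with the horodistance convention and with the equivariances of $\Initial,\Terminal$ and of the vertex distance under $\TreeAutom$; the one step that is not purely formal is this final reduction, i.e.\ the fact that the forward sets avoiding $\TreeBaseVertex$ and $\TreeAutom\TreeBaseVertex$ in their forward half-tree already generate the Boolean algebra of clopen sets of $\TreeBoundary$.
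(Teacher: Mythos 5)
Your proof is correct and follows essentially the same route as the paper's: observe that the cocycle $\Cocycle[\TheTwist]{\TreeAutom\TreeBaseVertex}{\TreeBaseVertex}$ is constant on the relevant forward sets, note that forward sets of edges pointing away from a fixed finite set of vertices already determine a finitely additive measure, and then unravel the definition of $\TwistedAction$. You in fact carry out explicitly the computation the paper dismisses as "straightforward," and you are slightly more careful than the printed proof about which base vertices the test edge must avoid (your $\TreeAutom\TreeEdge$ pointing away from $\TreeBaseVertex$ and $\TreeAutom\TreeBaseVertex$ is exactly the correct condition for the cocycle to be constant).
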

  \begin{proof}
    We start by pointing out that the function
    $\Cocycle[\TheTwist]{\TreeAutom\TreeBaseVertex}{\TreeBaseVertex}$ is
    constant on a forward boundary $\ForwardBoundaryOf{\TreeEdge}$ if the
    edge $\TreeEdge$ points away from $\TreeBaseVertex$ and
    $\InvAutom\TreeBaseVertex$. That explains the chosen restriction.
    
    Let $\TreeEdge$ be an oriented edge pointing away from
    $\TreeBaseVertex$. If
    $\VertexDistance{\InitialOf{\TreeEdge}}{\TreeBaseVertex}$ exceeds
    $\VertexDistance{\InvAutom\TreeBaseVertex}{\TreeBaseVertex}$, the edge
    $\TreeEdge$ also points away from $\InvAutom\TreeBaseVertex$. It
    follows that the forward boundaries $\ForwardBoundaryOf{\TreeEdge}$ of
    such edges cover $\TreeBoundary$. As a consequence, any finitely
    additive measure is completely determined by its values on the sets
    $\ForwardBoundaryOf{\TreeEdge}$ for edges pointing away from both,
    $\TreeBaseVertex$ and $\InvAutom\TreeBaseVertex$.

    Thus,
    \(
      \TwistedActionOf[\TreeAutom]{\TreeMeasure}
      =
      \TreeMeasure
    \)
    if and only if
    \(
      \TwistedActionOf[\TreeAutom]{\TreeMeasure}(\ForwardBoundaryOf{\TreeEdge})
      =
      \TreeMeasureOf{\ForwardBoundaryOf{\TreeEdge}}
    \)
    for each edge $\TreeEdge\in\TreeEdgeSet$ pointing away from
    $\TreeBaseVertex$ and away from $\InvAutom\TreeBaseVertex$.
    The claimed equivalence now follows from a straight forward computation
    unraveling the definition of the representation $\TwistedAction$.
  \end{proof}

  \begin{proposition}\label{restriction}
    The restriction homomorphism
    \(
      \Restriction\mapcolon
      \FinAddMeasuresOn{\PathSet}
      \longrightarrow
      \FinAddMeasuresOn{\TreeBoundary}
    \)
    restricts to an isomorphism from
    \(
      \Equalizer{\DualAlgTf}{\TheTwist\Identity}
    \)
    to
    \(
      \TwistedInvariants[\PiOne]{\FinAddMeasuresOn{\TreeBoundary}}
    \).
  \end{proposition}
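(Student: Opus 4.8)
The plan is to argue directly with the combinatorial models of both sides, rather than chasing the composite of the isomorphisms of Corollary~\ref{canonical-transpose}, Corollary~\ref{transfer-on-loc-const}, Observation~\ref{equiv-isom} and Observation~\ref{twisted-isom}. I work under the hypothesis $\TheTwist\neq 0$, which is needed for the right-hand side to be defined, and under which, by Observation~\ref{obs:measure-eigenspace} and~\pref{action-at-a-distance}, a measure $\TheMeasure\in\FinAddMeasuresOn{\PathSet}$ lies in $\Equalizer{\DualAlgTf}{\TheTwist\Identity}$ precisely when $\TheMeasure(\TheCode)=\TheTwist\,\TheMeasure(\TheEdge\turn\TheCode)$ for every postal code $\TheEdge\turn\TheCode$; iterating the ``prepend relation'' gives $\TheMeasure(\TheEdge[1]\turn\cdots\turn\TheEdge[\TheLength])=\TheTwist^{1-\TheLength}\TheMeasure(\TheEdge[\TheLength])$, so such a $\TheMeasure$ is determined by its values on $\PrefixSet[1]=\EdgeSet$. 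I also record a purely combinatorial fact about the finite connected graph $\TheGraph$ without dead ends (so that $\PiOne$ is infinite): for every oriented edge $\TheEdge$ there is a non-backtracking path from $\BaseVertex$ ending in $\TheEdge$, and more generally, for every vertex $\TheVertex$ and every oriented edge $\AltEdge$ with $\InitialOf{\AltEdge}=\TheVertex$ there is a non-backtracking path from $\BaseVertex$ to $\TheVertex$ whose last edge is distinct from $\OppositeOf{\AltEdge}$. This follows from surjectivity of the covering projection $\TheTree\to\TheGraph$ on oriented edges together with a pigeonhole argument in the rooted tree $\TheTree$ (no vertex admits an infinite chain of parents).

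First I would check that $\Restriction$ maps $\Equalizer{\DualAlgTf}{\TheTwist\Identity}$ into $\TwistedInvariants[\PiOne]{\FinAddMeasuresOn{\TreeBoundary}}$. Recall the dictionary: vertices of $\TheTree$ are the reduced edge paths issuing from $\BaseVertex$, a deck transformation $\TreeAutom\in\PiOne$ acts by $\TheCode\mapsto\operatorname{reduce}(\ThePath\cdot\TheCode)$ where $\ThePath$ is the reduced loop $\TreeAutom\TreeBaseVertex$, and for an edge $\TreeEdge$ of $\TheTree$ pointing away from $\TreeBaseVertex$ the forward set $\ForwardBoundaryOf{\TreeEdge}$ is the district $\PathSet[\TheCode]$ of the code $\TheCode$ of length $\VertexDistance{\TreeBaseVertex}{\TerminalOf{\TreeEdge}}$ leading to $\TerminalOf{\TreeEdge}$. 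Thus for $\TheMeasure\in\Equalizer{\DualAlgTf}{\TheTwist\Identity}$, writing $\TreeMeasure:=\Restriction(\TheMeasure)$, one has $\TreeMeasure(\ForwardBoundaryOf{\TreeEdge})=\TheMeasure(\TheCode)$. By Lemma~\ref{inv-equiv}, $\TreeMeasure$ is twisted $\PiOne$-invariant iff for every $\TreeAutom\in\PiOne$ and every $\TreeEdge$ pointing away from both $\TreeBaseVertex$ and $\InvAutom\TreeBaseVertex$ one has $\TheTwist^{\VertexDistance{\TreeBaseVertex}{\InitialOf{\TreeEdge}}}\TreeMeasure(\ForwardBoundaryOf{\TreeEdge})=\TheTwist^{\VertexDistance{\TreeBaseVertex}{\InitialOf{\TreeAutom\TreeEdge}}}\TreeMeasure(\ForwardBoundaryOf{\TreeAutom\TreeEdge})$ (note that under these hypotheses $\TreeAutom\TreeEdge$ again points away from $\TreeBaseVertex$, so the right-hand set is again a district). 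Translating, if $\TreeEdge$ corresponds to $\TheCode=\TheEdge[1]\turn\cdots\turn\TheEdge[\TheLength]$ then $\TreeAutom\TreeEdge$ corresponds to $\AltCode=\ThePath[1]\turn\cdots\turn\ThePath[r-j]\turn\TheEdge[j+1]\turn\cdots\turn\TheEdge[\TheLength]$, where $r=\LengthOf{\ThePath}$ and $j$ is the length of the cancelled segment; the hypothesis that $\TreeEdge$ point away from $\InvAutom\TreeBaseVertex$ forces $j<\TheLength$, so the common suffix $\TheEdge[j+1]\turn\cdots\turn\TheEdge[\TheLength]$ is a nonempty postal code. Peeling $\TheEdge[1]\turn\cdots\turn\TheEdge[j]$ off $\TheCode$ and $\ThePath[1]\turn\cdots\turn\ThePath[r-j]$ off $\AltCode$ with the prepend relation expresses both $\TheMeasure(\TheCode)$ and $\TheMeasure(\AltCode)$ as powers of $\TheTwist$ times $\TheMeasure$ of this suffix, whence $\TheMeasure(\TheCode)=\TheTwist^{\,r-2j}\TheMeasure(\AltCode)$; inserting $\VertexDistance{\TreeBaseVertex}{\InitialOf{\TreeEdge}}=\TheLength-1$ and $\VertexDistance{\TreeBaseVertex}{\InitialOf{\TreeAutom\TreeEdge}}=\LengthOf{\AltCode}-1=r+\TheLength-2j-1$ turns this into exactly the required identity.

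Injectivity is then immediate: if $\Restriction(\TheMeasure)=0$, then $\TheMeasure$ vanishes on all codes issuing from $\BaseVertex$; for an arbitrary oriented edge $\TheEdge$ pick a non-backtracking path $\ThePath$ from $\BaseVertex$ ending in $\TheEdge$, so $0=\TheMeasure(\ThePath)=\TheTwist^{1-\LengthOf{\ThePath}}\TheMeasure(\TheEdge)$ and hence $\TheMeasure(\TheEdge)=0$; since $\TheMeasure$ is determined by its values on $\EdgeSet$, $\TheMeasure=0$. For surjectivity, given $\TreeMeasure\in\TwistedInvariants[\PiOne]{\FinAddMeasuresOn{\TreeBoundary}}$ (viewed as an additive function on the codes issuing from $\BaseVertex$), I would define, for an arbitrary code $\TheCode=\TheEdge[1]\turn\cdots\turn\TheEdge[\TheLength]$ issuing from a vertex $\TheVertex$, the value $\TheMeasure(\TheCode):=\TheTwist^{\LengthOf{\ThePath}}\TreeMeasure(\ThePath\turn\TheCode)$, where $\ThePath$ is any non-backtracking path from $\BaseVertex$ to $\TheVertex$ with last edge distinct from $\OppositeOf{\TheEdge[1]}$ (so that $\ThePath\turn\TheCode$ is a code issuing from $\BaseVertex$; such $\ThePath$ exists by the combinatorial fact above). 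Independence of the choice of $\ThePath$ is the heart of the matter: two admissible choices $\ThePath,\AltPath$ differ by a deck transformation $\TreeAutom\in\PiOne$, the districts $\ThePath\turn\TheCode$ and $\AltPath\turn\TheCode$ correspond (via the lifts of these non-backtracking paths, whose final edges are automatically oriented away from $\TreeBaseVertex$) to $\ForwardBoundaryOf{\TreeEdge}$ and $\ForwardBoundaryOf{\TreeAutom\TreeEdge}$, and $\TheTwist^{\LengthOf{\ThePath}}\TreeMeasure(\ThePath\turn\TheCode)=\TheTwist^{\LengthOf{\AltPath}}\TreeMeasure(\AltPath\turn\TheCode)$ is precisely the twisted-invariance identity of Lemma~\ref{inv-equiv} (and if $\TreeEdge$ happens not to point away from $\InvAutom\TreeBaseVertex$, one first subdivides $\ForwardBoundaryOf{\TreeEdge}$ into forward sets of edges at a common, sufficiently large depth that all qualify, applies Lemma~\ref{inv-equiv} to each, and sums — the common depth shift cancels). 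Granting well-definedness, one checks routinely that $\TheMeasure$ is additive (a valid $\ThePath$ for $\TheCode$ is also valid for each one-digit extension of $\TheCode$, and one invokes additivity of $\TreeMeasure$), that $\TheMeasure(\AltEdge\turn\TheCode)=\TheTwist^{-1}\TheMeasure(\TheCode)$ and hence $\TheMeasure\in\Equalizer{\DualAlgTf}{\TheTwist\Identity}$ (take $\ThePath\turn\AltEdge$ as the admissible path for $\TheCode$, where $\ThePath$ is admissible for $\AltEdge\turn\TheCode$), and that $\Restriction(\TheMeasure)=\TreeMeasure$ (take $\ThePath$ empty).

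The main obstacle is the two translations between the combinatorics of the universal cover and those of postal codes: the bookkeeping of $\operatorname{reduce}(\ThePath\cdot\TheCode)$ and of the resulting powers of $\TheTwist$ in the ``into'' step, and — the genuinely delicate point — the well-definedness of the transport in the surjectivity step, where the admissibility condition on $\ThePath$ must be reconciled with the scope of Lemma~\ref{inv-equiv} via the subdivision device. A shorter but less self-contained route for $\TheTwist\not\in\SetOf{-1,0,1}$ would be to combine the injectivity above with a dimension count read off from the chain Corollary~\ref{canonical-transpose} $\to$ Corollary~\ref{transfer-on-loc-const} $\to$ Observation~\ref{equiv-isom} $\to$ Observation~\ref{twisted-isom}; but this does not settle $\TheTwist=\pm1$ and still requires the ``into'' computation, so the direct argument is preferable.
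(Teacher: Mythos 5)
Your proof follows essentially the same strategy as the paper's: verify ``into'' via Lemma~\ref{inv-equiv}, establish injectivity from the fact that an eigenmeasure is determined by its values on codes within the island of~$\BaseVertex$, and then construct a preimage for surjectivity via a $\TheTwist$-weighted transport of $\TreeMeasure$. Your variant of the surjectivity step (defining $\TheMeasure$ on all codes directly and checking additivity by hand, rather than defining $\FnEdge$ on edges only and invoking Theorem~\ref{dual-main}) is a legitimate alternative that trades a citation for a short computation. Your ``into'' step is also correct, though more laborious than necessary: rather than tracking word cancellations, one can simply observe that
\[
\TheTwist^{\VertexDistance{\TreeBaseVertex}{\InitialOf{\TreeEdge}}}\TreeMeasure(\ForwardBoundaryOf{\TreeEdge})
= \TheTwist^{\LengthOf{\TheCode}}\cdot\TheTwist^{-\LengthOf{\TheCode}}\TheMeasureOf{\TheEdge}
= \TheMeasureOf{\TheEdge}
\]
depends only on the projected final edge $\TheEdge$, which is the same for $\TreeEdge$ and $\TreeAutom\TreeEdge$; the identity in Lemma~\ref{inv-equiv} then holds for free.

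The one place where you have genuinely lost the thread is the parenthetical ``subdivision device'' in the well-definedness check of the surjectivity step. That case never arises: by construction $\TreeEdge$ is the final edge of the lift of $\ThePath\turn\TheCode$ at $\TreeBaseVertex$ and $\TreeAutom\TreeEdge$ is the final edge of the lift of $\AltPath\turn\TheCode$ at $\TreeBaseVertex$, so \emph{both} point away from $\TreeBaseVertex$; applying $\TreeAutom^{-1}$ to the second statement shows that $\TreeEdge$ automatically points away from $\InvAutom\TreeBaseVertex$ as well, so the hypothesis of Lemma~\ref{inv-equiv} is always met and no subdivision is needed. Moreover, the fallback argument you sketch would not work as stated: after subdividing into edges $\TreeEdge[\TheIndex]$ at a common distance $D$ from $\TreeBaseVertex$, the quantities $\VertexDistance{\TreeBaseVertex}{\InitialOf{\TreeAutom\TreeEdge[\TheIndex]}} = \VertexDistance{\InvAutom\TreeBaseVertex}{\InitialOf{\TreeEdge[\TheIndex]}}$ are \emph{not} all equal when $\InvAutom\TreeBaseVertex$ lies in the forward half-tree of $\TreeEdge$ (they vary by a Busemann-type quantity across the subtree), so ``the common depth shift cancels'' is false in exactly the situation the device was designed to handle. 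Since the device is never invoked this does not invalidate your proof, but you should delete it and replace it with the observation above.
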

  \begin{proof}
    First assume that 
    \(
      \TheMeasure
      \in
      \Equalizer{\DualAlgTf}{\TheTwist\Identity}
    \)
    and consider an oriented edge $\TreeEdge$ in $\TheTree$ pointing
    away from $\TreeBaseVertex$. Then there is a unique edge path
    $\TreeCode\turn\TreeEdge$ upstairs starting at $\TreeBaseVertex$
    ending in $\TreeEdge$ without backtracking. It projects to a postal
    code $\TheCode\turn\TheEdge$ downstairs.

    Now, consider the restriction
    $\TreeMeasure:=\RestrictionOf{\TheMeasure}$.  As $\TheMeasure$
    satisfies the identity~\pref{action-at-a-distance}, we find
    \[
      \TreeMeasureOf{\ForwardBoundaryOf{\TreeEdge}}
      =
      \TheMeasureOf{\TheCode\turn\TheEdge}
      =
      \TheTwist^{-\LengthOf{\TheCode}}
      \TheMeasureOf{\TheEdge}
    \]
    where $\LengthOf{\TheCode}$ denotes the number of edges\,/\,digits
    in $\TheCode$. Now Lemma~\ref{inv-equiv} implies
    \(
      \TreeMeasure
      \in
      \TwistedInvariants[\PiOne]{\FinAddMeasuresOn{\TreeBoundary}}
    \).
    
    Being an eigenvector for $\DualAlgTf$, the measure $\TheMeasure$ is
    completely determined by its values on oriented edges. Moreover, using
    again the identity~\eqref{action-at-a-distance}, we see that
    $\TheMeasure$ is determined by its values on any finite set of postal
    codes, provided every oriented edge occurs at least once as a final
    digit. As $\TheGraph$ does not have dead ends, $\TheMeasure$ is
    therefore determined by its values on the island designated by
    $\BaseVertex$. Hence the restriction homomorphism is injective on the
    eigenspace $\Equalizer{\DualAlgTf}{\TheTwist\Identity}$.

    It remains to show that every $\PiOne$-periodic measure
    \(
      \TreeMeasure
      \in
      \TwistedInvariants[\PiOne]{\FinAddMeasuresOn{\TreeBoundary}}
    \)
    is the restriction of an eigenmeasure
    \(
      \TheMeasure
      \in
      \Equalizer{\DualAlgTf}{\TheTwist\Identity}
    \).
    We define a function $\FnEdge$ on the set $\EdgeSet$ of
    oriented edges in $\TheGraph$ in terms of a given $\TreeMeasure$ as
    follows. For an oriented edge $\TheEdge$ choose a postal code
    $\TheCode\turn\TheEdge$ ending in $\TheEdge$ that designates a
    district in the island designated by $\BaseVertex$. Identifying
    $\TreeBoundary$ with $\PathSet[\BaseVertex]$, we write
    $\TreeMeasureOf{\TheCode\turn\TheEdge}$ for the value of the district
    $\PathSet[\TheCode\turn\TheEdge]$ regarded as a measurable set in
    $\TreeBoundary$. Finally, we put
    \(
      \FnEdgeOf{\OppositeOf{\TheEdge}}
      :=
      \TheTwist^{\LengthOf{\TheCode}}
      \TreeMeasureOf{\TheCode\turn\TheEdge}
    \).

    To show that $\FnEdge$ is well-defined, we need to consider
    an alternative postal code $\AltCode\turn\TheEdge$ also ending in
    $\TheEdge$ and also located in the island of $\BaseVertex$. The paths
    $\TheCode\turn\TheEdge$ and $\AltCode\turn\TheEdge$ both issue from
    $\BaseVertex$ and lift to the universal cover ending in two edges
    $\TreeEdge[\TheCode]$ and $\TreeEdge[\AltCode]$ pointing away from the
    base vertex $\TreeBaseVertex$ upstairs. The fundamental group acts
    transitively on the lifts of $\TheEdge$, whence there is
    $\TreeAutom\in\PiOne$ with
    $\TreeAutom\TreeEdge[\TheCode]=\TreeEdge[\AltCode]$. It follows that
    $\TreeEdge[\TheCode]$ also points away from $\InvAutom\TreeBaseVertex$
    and Lemma~\ref{inv-equiv} implies that
    $\FnEdgeOf{\OppositeOf{\TheEdge}}$ does not depend on the choice
    of the prefix $\TheCode$.

    Using additivity of $\TreeMeasure$, we compute
    \[
      (\TurnSum\FnEdge)(\OppositeOf{\TheEdge})
      =
      \Sum[\TheEdge\turn\AltEdge]{
        \FnEdgeOf{\OppositeOf{\AltEdge}}
      }
      =
      \Sum[\TheEdge\turn\AltEdge]{
        \TheTwist^{\LengthOf{\TheCode}+1}
        \TreeMeasureOf{\TheCode\turn\TheEdge\turn\AltEdge}
      }
      =
      \TheTwist^{\LengthOf{\TheCode}+1}
      \TreeMeasureOf{\TheCode\turn\TheEdge}
      =
      \TheTwist
      \FnEdgeOf{\OppositeOf{\TheEdge}}
    \]
    and apply Theorem~\ref{dual-main} to deduce that the measure
    $\TheMeasure$ induced by $\FnEdge$ is an eigenmeasure for
    $\DualAlgTf$ and restricts to $\TreeMeasure$ on $\TreeBoundary$.
  \end{proof}

  \begin{theorem}\label{thm:spectral correspondence via covering}
    For $\TheTwist\not\in\SetOf{-1,0,1}$, the isomorphisms of
    Theorem~\ref{dual-main}, Proposition~\ref{algebraic},
    Observation~\ref{equiv-isom}, Observation~\ref{twisted-isom} and
    Proposition~\ref{restriction} fit into a commutative diagram, as
    follows:
    \[
      \begin{tikzcd}
        \Equalizer{\DualAlgTf}{\TheTwist\Identity}
        \arrow[d,"\Restriction"',"\ref{restriction}"]
        \arrow[r,"\ref{dual-main}"']
        &
        \Equalizer{\TurnSum}{\TheTwist\Identity}
        \arrow[r,"\InSum","\ref{algebraic}"']
        &
        \Equalizer{\NeighborSum}{\Rescale[\TheTwist]}
        \makebox[0pt][l]{$=
          \Equalizer{\NeighborAvg}{\LocalTweak[\TheTwist]}$}
        \arrow[d,"\mathrm{lift}","\ref{equiv-isom}"']
        \\
        \TwistedInvariants[\PiOne]{
          \FAMeasuresOn{\TreeBoundary}
        }
        \arrow[rr,"{\PoissonTf[\TheTwist]}"',"\ref{twisted-isom}"]
        &
        &
        \Invariants[\PiOne]{
          \Equalizer{\TreeNeighborSum}{\TreeRescale[\TheTwist]}
        }
        \makebox[0pt][l]{$=
          \Invariants[\PiOne]{
            \Equalizer{\TreeNeighborAvg}{\TreeLocalTweak[\TheTwist]}
          }$}        
      \end{tikzcd}
    \]
  \end{theorem}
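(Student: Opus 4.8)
The plan is to take an arbitrary eigenmeasure $\TheMeasure\in\Equalizer{\DualAlgTf}{\TheTwist\Identity}$, push it around the square both ways, and check that the two resulting functions on $\TreeVertexSet$ coincide; since (for $\TheTwist\notin\SetOf{-1,0,1}$) each of the five maps in the square is an isomorphism by the quoted results --- $\Restriction$ on $\Equalizer{\DualAlgTf}{\TheTwist\Identity}$ by Proposition~\ref{restriction}, $\TheMeasure\mapsto(\TheEdge\mapsto\TheMeasureOf{\OppositeOf{\TheEdge}})$ by Theorem~\ref{dual-main}, $\InSum$ by Proposition~\ref{algebraic}, the pull back (``lift'') by Observation~\ref{equiv-isom}, and $\PoissonTf[\TheTwist]$ by Observation~\ref{twisted-isom} --- commutativity is all that remains; the ``$=$'' annotations on the two right-hand nodes merely record $\Equalizer{\NeighborSum}{\Rescale[\TheTwist]}=\Equalizer{\NeighborAvg}{\LocalTweak[\TheTwist]}$ and its analogue upstairs, noted before Proposition~\ref{translated-alg-obs}. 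Going right and then down produces $\FnEdge\mapcolon\TheEdge\mapsto\TheMeasureOf{\OppositeOf{\TheEdge}}$, then $\FnVert:=\InSum\FnEdge\mapcolon\TheVertex\mapsto\Sum[\InitialOf{\AltEdge}=\TheVertex]{\TheMeasureOf{\AltEdge}}$ (after reindexing $\AltEdge=\OppositeOf{\TheEdge}$; this lies in $\Equalizer{\NeighborSum}{\Rescale[\TheTwist]}$ since $\Inclusion(\NeighborSum-\Rescale[\TheTwist])\InSum\FnEdge=(\TurnSum-\TheTwist\Identity)\Gradient[\TheTwist]\InSum\FnEdge=(\TheTwist-\TheTwist^{-1})(\TurnSum-\TheTwist\Identity)\FnEdge=0$ by~\pref{OperatorIdentity} and Proposition~\ref{algebraic}, and $\Inclusion$ is injective), and finally the pull back $\FnTreeVert$ of $\FnVert$ to $\TreeVertexSet$, which satisfies $\FnTreeVertOf{\TheTreeVertex}=\FnVertOf{\TheVertex}$ whenever $\TheTreeVertex$ projects to $\TheVertex$. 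Going down and then right produces $\TreeMeasure:=\RestrictionOf{\TheMeasure}$ and then $\PoissonTfOf[\TheTwist]{\TreeMeasure}$. So everything reduces to showing $\FnTreeVert=\PoissonTfOf[\TheTwist]{\TreeMeasure}$.

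Rather than evaluating the Poisson integral directly, I would verify this through the boundary value map. Since $\FnTreeVert\in\Invariants[\PiOne]{\Equalizer{\TreeNeighborSum}{\TreeRescale[\TheTwist]}}\subseteq\Equalizer{\TreeNeighborAvg}{\TreeLocalTweak[\TheTwist]}$, the inversion result \cite[Theorem~4.7]{BHW22} identifies $\FnTreeVert$ as the Poisson transform of the \emph{unique} finitely additive measure $\TreeMeasure'$ on $\TreeBoundary$ characterised by~\pref{boundary}, so it suffices to check $\TreeMeasure'=\TreeMeasure$. Fix an oriented edge $\TreeEdge$ of $\TheTree$ pointing away from $\TreeBaseVertex$, set $\TheTreeVertex:=\InitialOf{\TreeEdge}$ and $\AltTreeVertex:=\TerminalOf{\TreeEdge}$, let $\TheEdge$ (going from $\TheVertex$ to $\AltVertex$ downstairs) be the projection of $\TreeEdge$, let $\TheCode\turn\TheEdge$ be the projection of the geodesic in $\TheTree$ from $\TreeBaseVertex$ ending with $\TreeEdge$, and put $\ThrIndex:=\LengthOf{\TheCode}=\VertexDistance{\TreeBaseVertex}{\TheTreeVertex}$. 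An oriented edge issuing from $\AltVertex$ is either $\OppositeOf{\TheEdge}$ or a one-turn extension $\AltEdge$ of $\TheEdge$; combining this with additivity of $\TheMeasure$ and~\pref{action-at-a-distance} in the form $\Sum[\TheEdge\turn\AltEdge]{\TheMeasureOf{\AltEdge}}=\TheTwist\TheMeasureOf{\TheEdge}$ gives $\FnTreeVertOf{\AltTreeVertex}=\FnVertOf{\AltVertex}=\TheMeasureOf{\OppositeOf{\TheEdge}}+\TheTwist\TheMeasureOf{\TheEdge}$, and symmetrically $\FnTreeVertOf{\TheTreeVertex}=\FnVertOf{\TheVertex}=\TheMeasureOf{\TheEdge}+\TheTwist\TheMeasureOf{\OppositeOf{\TheEdge}}$. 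Hence $\TheTwist\FnTreeVertOf{\AltTreeVertex}-\FnTreeVertOf{\TheTreeVertex}=(\TheTwist^{2}-1)\TheMeasureOf{\TheEdge}$, while $\TreeMeasureOf{\ForwardBoundaryOf{\TreeEdge}}=\TheMeasureOf{\TheCode\turn\TheEdge}=\TheTwist^{-\ThrIndex}\TheMeasureOf{\TheEdge}$ exactly as in the proof of Proposition~\ref{restriction}, so that
\[
  \TheTwist\FnTreeVertOf{\TerminalOf{\TreeEdge}}-\FnTreeVertOf{\InitialOf{\TreeEdge}}
  =(\TheTwist^{2}-1)\,\TheTwist^{\VertexDistance{\TreeBaseVertex}{\InitialOf{\TreeEdge}}}\,\TreeMeasureOf{\ForwardBoundaryOf{\TreeEdge}}.
\]
This is precisely relation~\pref{boundary} for the measure $\TreeMeasure$, whence $\TreeMeasure'=\TreeMeasure$ and the diagram commutes.

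The conceptual content of the argument is small; the step I expect to cause the most friction is the bookkeeping. One has to track every passage to opposite edges (they occur in Theorem~\ref{dual-main}, in the definition of $\InSum$, and in~\pref{boundary}) and every power of $\TheTwist$ produced both by~\pref{action-at-a-distance} and by the implicit shift of the Poisson base point inside $\Restriction$. In particular one must observe that the horizontal arrow of the square is the \emph{unnormalised} operator $\FnEdge\mapsto\InSum\FnEdge$, which differs from the inverse of the twisted gradient $\Gradient[\TheTwist]$ by the scalar $\TheTwist-\TheTwist^{-1}$: with this convention the factors $\TheTwist^{\pm\ThrIndex}$ and $\TheTwist^{2}-1$ cancel exactly as above, so no spurious constant intrudes and the square commutes on the nose.
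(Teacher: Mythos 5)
Your proposal is correct and takes essentially the same route as the paper: both arguments reduce commutativity of the square to matching the explicit formula for the composite of the top-row isomorphisms (together with restriction) against the boundary-value characterization~\pref{boundary} of $\PoissonTf[\TheTwist]^{-1}$ from \cite[Theorem~4.7]{BHW22}. The only difference is cosmetic — you chase an eigenmeasure forward from the upper-left corner, while the paper starts at the upper-right corner and reads the top row in reverse — and your bookkeeping of opposite edges and powers of $\TheTwist$ checks out.
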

  \begin{proof}
    First note that we may read Proposition~\ref{algebraic} in reverse as
    \[
      \frac{\TheTwist}{\TheTwist^2-1}\Gradient[\TheTwist]
      =
      \InSum^{-1}
    \]
    and pick $\FnVert\in\Equalizer{\NeighborSum}{\Rescale[\TheTwist]}$
    in the upper right hand corner. Its image on the left is the measure
    $\TheMeasure$ that evaluates on postal codes as follows:
    \[
      \TheMeasureOf{\TheCode\turn\TheEdge}
      =
      \TheTwist^\LengthOf{\TheCode}
      \frac{\TheTwist}{\TheTwist^2-1}
      (\Gradient[\TheTwist]\FnVert)(\OppositeOf{\TheEdge}).
    \]
    Its restriction upstairs which appears on the lower left of
    the diagram is given by the same formula: one just restricts to those
    $\TheCode\turn\TheEdge$ starting at the base vertex $\BaseVertex$.

    Comparing this formula to the identity~\pref{boundary} that
    describes the inverse of the Poisson transform, one sees that the
    diagram commutes.
  \end{proof}

  \printbibliography

  \bigskip
  \ContactInfo
\end{document}